\documentclass[11pt,letterpaper]{amsart}

\usepackage[utf8]{inputenc}
\usepackage{amsmath}
\usepackage{amssymb}
\usepackage{amsthm}
\usepackage{xparse}

\usepackage[pdftex]{hyperref}

\newtheorem{counter}{subcounter}[section]
\newtheorem{cor}[counter]{Corollary}
\newtheorem{lemma}[counter]{Lemma}
\newtheorem{prop}[counter]{Proposition}

\newtheorem{theorem}[counter]{Theorem}

\newtheorem{athm}{Theorem}

\theoremstyle{definition}
\newtheorem{defn}[counter]{Definition}
\newtheorem{example}[counter]{Example}
\newtheorem{remark}[counter]{Remark}

\newcommand{\eqnsp}[1]{\leavevmode\xleaders\hbox{\phantom{-}}\hfill\kern0pt\[#1\]}

\newcommand{\eps}{\varepsilon}

\newcommand{\hsp}{\mskip0.5\thinmuskip}
\newcommand{\msp}{\mskip0.25\thinmuskip}

\newcommand{\acdot}{\mskip0.75\thinmuskip\cdot\mskip0.75\thinmuskip}

\newcommand{\lsc}{\textsc{lsc}}
\newcommand{\usc}{\textsc{usc}}

\renewcommand{\Re}{\operatorname{Re}}

\newcommand{\A}{\mathcal{A}}
\newcommand{\B}{\mathcal{B}}

\newcommand{\D}{\mathcal{D}}

\renewcommand{\H}{\mathcal{H}}
\newcommand{\I}{\mathcal{I}}
\newcommand{\J}{\mathcal{J}}
\newcommand{\K}{\mathcal{K}}
\newcommand{\M}{\mathcal{M}}
\newcommand{\N}{\mathcal{N}}
\renewcommand{\P}{\mathcal{P}}
\newcommand{\Q}{\mathcal{Q}}

\renewcommand{\S}{\mathcal{S}}

\newcommand{\C}{\mathbb{C}}

\newcommand{\Fb}{\mathbb{F}}

\newcommand{\Nb}{\mathbb{N}}

\newcommand{\R}{\mathbb{R}}
\newcommand{\Tb}{\mathbb{T}}
\newcommand{\Zb}{\mathbb{Z}}

\DeclareMathOperator{\Aut}{Aut}
\DeclareMathOperator{\Prim}{Prim}
\DeclareMathOperator{\supp}{supp}
\DeclareMathOperator{\Span}{Span}

\NewDocumentCommand\abs{g}{
    \lvert\IfValueTF{#1}{#1}{\,\cdot\,}\rvert
}

\NewDocumentCommand\inner{gg}{
    \langle\IfValueTF{#2}{#1,#2}{\cdot,\IfValueTF{#1}{#1}{\cdot}}\rangle
}

\NewDocumentCommand\norm{g}{
    \lVert\IfValueTF{#1}{#1}{\,\cdot\,}\rVert
}

\calclayout

\hypersetup{
    colorlinks=true,
    linkcolor=blue,
    citecolor=red,
    urlcolor=cyan,
}

\begin{document}

\title{Minimality for noncommutative dynamical systems}

\author{Thomas Bray}
\address{University of Waterloo, 200 University Avenue West, Waterloo, Ontario, N2L 3G1, Canada}
\email{tgrbray@uwaterloo.ca}

\author{Erik S\'eguin}
\address{University of Waterloo, 200 University Avenue West, Waterloo, Ontario, N2L 3G1, Canada}
\email{erik.seguin@uwaterloo.ca}

\thanks{The first author was partially supported by an Ontario Graduate Scholarship.}

\begin{abstract}
We consider an alternative notion of minimality for C*-dynamical systems, which we refer to as strong minimality. This notion coincides with the usual definition of minimality in the commutative setting, but diverges sharply from it in the noncommutative setting in a way which we argue addresses several fundamental defects in the usual definition. We demonstrate that strong minimality is deeply connected with R\o rdam's notion of a C*-irreducible inclusion, giving a new perspective on the study of such inclusions. We provide various characterizations of this property, including for reduced crossed products, tensor products, and subhomogeneous C*-algebras. 
\end{abstract}

\maketitle

\setcounter{tocdepth}{1}
\tableofcontents

\section{Introduction}

Minimality is a fundamental notion in classical topological dynamics. A topological dynamical system ${(X,G,\alpha)}$ is \emph{minimal} if $X$ admits no proper nonempty $G$-invariant closed subsets. If $X$ is a compact Hausdorff space, then this property may also be characterized in terms of ${C(X)}$ through the duality between closed subsets of $X$ and closed ideals in ${C(X)}$. Taking the viewpoint inspired by Gelfand duality that C*-algebras are noncommutative topological spaces, dualizing minimality in this way suggests various possible corresponding noncommutative generalizations of the notion to C*-dynamical systems; it is one of these generalizations that we examine in this paper. 
\medskip

The widely accepted generalization of minimality to the noncommutative setting is as follows: a C*-dynamical system ${(\A,G,\alpha)}$ is said to be \emph{minimal} if $\A$ admits no nontrivial $G$-invariant closed two-sided ideals. This clearly extends the commutative notion, but it is rather unsatisfactory as a generalization to the noncommutative setting from the perspectives of both topological dynamics and operator algebras. 
\medskip

From a topological dynamical standpoint, the above definition of minimality for C*-dynamical systems fails as an extension of the commutative notion on the most basic level: it may not reflect the actual dynamics of the action. Indeed, it can fail rather catastrophically in this regard: if $\A$ is a simple C*-algebra, then every action is automatically minimal, even the trivial one. 
\medskip

This notion of minimality is also rather unsatisfactory from an operator algebraic perspective. Classically, the measurable dynamical counterpart to minimality is ergodicity; in particular, if $X$ is a finite set equipped with the discrete topology and the counting measure, minimality coincides with ergodicity for any action on $X$ (and both are equivalent to transitivity of the action). If one accepts the point of view that a finite-dimensional C*-algebra is the noncommutative analogue of a finite topological space, it is reasonable to expect that minimality should agree with ergodicity in this setting. However, this is not so: the C*-algebra of ${n\times n}$ matrices, ${M_n}$, is simple, therefore minimal irrespective of the action, but ergodic only if there are no projections fixed by the action. (In particular, the two do not coincide when ${n\geq2}$ and the action is trivial.)
\medskip

In this paper, we attempt to rectify these issues by modifying the usual definition of minimality in the noncommutative setting to preclude the presence of nontrivial $G$-invariant closed \emph{left} ideals rather than two-sided ideals. This seemingly slight modification turns out to be highly impactful: in addition to satisfactorily resolving the aforementioned issues, this alternative definition encodes operator algebraic structural properties which do not at first glance appear to have any connection with dynamics. 
\medskip

We say that a C*-dynamical system ${(\A,G,\alpha)}$ is \emph{strongly minimal} if $\A$ has no proper nonempty $G$-invariant closed left ideals. Strongly minimal C*-dynamical systems in the preceding sense have appeared previously in the literature: they were first introduced in \cite{ll75} (where they were given the name \emph{minimal}) and then further studied in \cite{avitzour84} and \cite{lp84}. However, the notion has since dropped out of consideration almost entirely, save for a few scattered mentions over the years. We attribute this disappearance to a number of reasons: direct competition with a similar definition, theorems with occasionally overly restrictive assumptions, and a shortage of interesting examples. We address all of these in the present work. 
\medskip

One of our main results is the following characterization of strong minimality. In the following statement, ${\partial_FG}$ denotes the Fustenberg boundary of $G$. 

\begin{athm}\label{strong-minimality-char}
Let ${(\A,G)}$ be a unital C*-dynamical system. The following are equivalent: 
\begin{enumerate}
\item Every $G$-invariant closed left ideal ${\I\subseteq\A}$ is trivial. 
\item Every $G$-invariant unital C*-subalgebra ${\B\subseteq\A}$ is minimal. 
\item Every $G$-equivariant unital completely positive linear map ${\Phi:\A\to C(\partial_FG)}$ is faithful. 
\item Every nonzero element ${x\in\A_+}$ admits a collection ${s_1,\dots,s_n\in G}$ (possibly with repetition) such that ${\smash{\sum_{j\,=\,1}^n s_j\msp x\geq1_\A}}$. 
\item Every nonzero open projection ${e\in\A^{**}}$ admits ${s_1,\dots,s_n\in G}$ such that ${\smash{\bigvee_{j\,=\,1}^n s_j\msp e=1_{\A^{**}}}}$. 
\item The Poisson map ${\P_\psi:\A\to\ell^\infty(G)}$ is faithful for every pure state ${\psi\in P(\A)}$. 
\end{enumerate}
\end{athm}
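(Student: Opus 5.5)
The plan is to organize the equivalences around condition (4), which is the most concrete, and to use the duality between closed left ideals and open projections / hereditary subalgebras for (1)$\Leftrightarrow$(5). Throughout, $G$ is discrete, acting by automorphisms, and $s\msp x$ denotes the action.

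For \textbf{(1)$\Leftrightarrow$(4)}: given nonzero $x\in\A_+$, I will let $\I$ be the closed left ideal generated by $\{s\msp x: s\in G\}$. This ideal is $G$-invariant and nonzero, so by (1) it equals $\A$. Then the hereditary cone $\overline{\I^*\I}$ contains $1$. Hence there is $y$ in the algebraic left ideal, of the form $y=\sum a_j (s_j\msp x)$, with $\norm{1-y}<1$, so $y$ is invertible. Next I will pass from $y$ to a positive sum: the element $\sum_j (s_j\msp x)^{1/2}a_j^*a_j(s_j\msp x)^{1/2}$ is invertible. Bounding $a_j^*a_j\le \norm{a_j}^2$ gives $\sum_j \norm{a_j}^2\, s_j\msp x\ge c>0$. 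Repeating terms then produces $\sum s_j\msp x\ge 1$ after rescaling. This works since $x$ may be replaced by a multiple: more precisely, I replace $x$ by $\lambda x$ with $\lambda$ large and use repetitions with integer weights. The one subtle point is that (4) asks for $\ge 1$ without a scalar. I will handle this by applying the argument to a suitable $f(x)$ with $f(x)\le Cx$, and then taking multiplicities $\lceil C\norm{a_j}^2/c\rceil$. For the converse, if $\I\neq 0$ is $G$-invariant, I pick $0\neq a\in\I$ and set $x=a^*a\in \I^*\I$. Invariance gives $\sum s_j\msp x\ge 1$, and this sum lies in $\I$ after left multiplication is absorbed: $\sum (s_j\msp a)^*(s_j\msp a)$ lies in $\I$. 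So $\I$ contains an invertible element, whence $\I=\A$.

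For \textbf{(1)$\Leftrightarrow$(5)}: closed left ideals correspond bijectively and $G$-equivariantly to open projections via $\I=\A^{**}(1-e)^\perp\cap\A$. Under this correspondence, the ideal generated by $\I_1,\dots,\I_n$ corresponds to $\bigvee e_j$, and $\I=\A$ corresponds to $e=1$. Then (4) translates to (5) after compactness: the condition $\bigvee s_j\msp e=1$ says the left ideal generated is $\A$, which contains $1$, and for an increasing union only finitely many translates are needed. Condition (5) is thus (1) plus a compactness argument on the unit.

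For \textbf{(2)} and \textbf{(3)}: (1)$\Rightarrow$(2) is immediate from (4), since (4) passes to the subalgebra $\B$ and forces every invariant two-sided ideal to contain an invertible element. For (2)$\Rightarrow$(1), given a proper invariant closed left ideal $\I$, I would pass to a $G$-invariant state-theoretic object. Specifically, I take the $G$-invariant closed convex set of states vanishing on $\I$ (nonempty since $\I$ is proper). Using $G$-injectivity of $C(\partial_FG)$ (Hamana), I get an equivariant ucp map $\Phi:\A\to C(\partial_FG)$ killing $\I^*\I$, so $\Phi$ is not faithful; this gives (3)$\Rightarrow$(1). For (1)$\Rightarrow$(3), the kernel $\{a:\Phi(a^*a)=0\}$ is an invariant left ideal, proper since $\Phi(1)=1$, and hence zero. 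For (2)$\Rightarrow$(3), I would use the multiplicative domain / a Choi--Effros-type image. The image of a non-faithful $\Phi$ on a well-chosen invariant C*-subalgebra gives a non-minimal one; for instance, the C*-subalgebra generated by $1$ and an orbit of $x$ with $\Phi(x)=0$, which maps into the boundary where the boundary's strong proximality forces a nontrivial invariant ideal. This is the step I expect to be the main obstacle, and I would first try to exploit rigidity and essentiality of $C(\partial_FG)$.

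For \textbf{(6)}: the Poisson map $\P_\psi(a)(s)=\psi(s^{-1}\msp a)$ has kernel on positives given by $\{x\ge0: \psi(s\msp x)=0\ \forall s\}$. Thus (4) implies faithfulness, since applying $\psi$ to $\sum s_j\msp x\ge1$ shows some $\psi(s_j\msp x)>0$. For the converse, if $\I$ is a proper invariant left ideal, I choose a pure state $\psi$ vanishing on $\I^*\I$; this exists because the face of states vanishing on a left ideal has extreme points that are pure. Then $\P_\psi$ kills $\I^*\I\cap\A_+\neq 0$.
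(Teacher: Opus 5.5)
\textbf{Main gap: nothing is ever derived from condition (2).} Your paragraph headed (2)$\Rightarrow$(1) actually proves (3)$\Rightarrow$(1), as you say yourself at the end of it. You then leave (2)$\Rightarrow$(3) open, and the plan you sketch for it does not work. Restricting a non-faithful $\Phi$ to a subalgebra gives only another ucp map: its kernel is not an ideal and its image is not a C*-algebra. So strong proximality of $\partial_FG$ has nothing to act on.

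The paper closes this step with the multiplicative domain $\D$ of a $G$-equivariant ucp map $\Phi:\A\to C(\partial_FG)$:
\begin{itemize}
\item $\D$ is a $G$-invariant unital C*-subalgebra, and $\Phi\rvert_\D$ is an equivariant unital *-homomorphism.
\item Hence $\ker(\Phi\rvert_\D)$ is a proper $G$-invariant ideal of $\D$, and (2) forces it to vanish.
\item Lemma \ref{completely-positive-map-faithful-domain} upgrades this to faithfulness of $\Phi$ on all of $\A$. If $x\geq0$ and $\Phi(x^2)=0$, Kadison--Schwarz puts $x$ in $\D$, so $x^{1/2}\in\D\cap\N_\Phi=\{0\}$.
\end{itemize}

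Your candidate subalgebra also works, but for an elementary reason that needs no boundary theory. Let $\I$ be a nonzero proper $G$-invariant closed left ideal and $0\neq x\in\I_+$. The C*-algebra $\mathcal{C}$ generated by $\{s\msp x:s\in G\}$ lies in the hereditary subalgebra $\I\cap\I^*$, so $1_\A\notin\mathcal{C}$. Therefore $\mathcal{C}$ is a nonzero proper $G$-invariant ideal of the $G$-invariant unital C*-subalgebra $\mathcal{C}+\C\msp1_\A$. This contradicts (2) directly.

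\textbf{An incorrect step in (1)$\Rightarrow$(4).} From invertibility of $y=\sum_ja_j(s_j\msp x)$ you assert that $\sum_j(s_j\msp x)^{1/2}a_j^*a_j(s_j\msp x)^{1/2}$ is invertible. This does not follow, and it is false in general. Take $M_2$ with $\Zb/2\Zb$ acting by conjugation by the flip unitary, $x=\operatorname{diag}(1,4)$, $a_1=\frac16\left(\begin{smallmatrix}-2&1\\-4&2\end{smallmatrix}\right)$ and $a_2=\frac16\left(\begin{smallmatrix}2&-4\\1&-2\end{smallmatrix}\right)$. Then $1=a_1x+a_2(s\msp x)$, yet $a_1x^{1/2}$ and $a_2(s\msp x)^{1/2}$ both annihilate $(1,1)^T$.

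What you need instead is an upper bound on $y^*y$ by a multiple of $\sum_js_j\msp x$. The Cauchy--Schwarz expansion in the proof of Proposition \ref{strong-minimality-local-char-lemma} supplies exactly this. More simply, with $c_j=a_j(s_j\msp x)$ one has $y^*y\leq n\sum_jc_j^*c_j\leq n\norm{x}\sum_j\norm{a_j}^2\hsp s_j\msp x$, and your repetition argument then finishes.

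The remaining parts are sound:
\begin{itemize}
\item (4)$\Rightarrow$(1) and (6) are fine.
\item (1)$\Leftrightarrow$(5) works by approximating $1_\A$ inside an increasing union of left ideals. This is a direct substitute for the compactness result used in Remark \ref{open-proj-covers-one}.
\item Your (1)$\Rightarrow$(2) via (4) is more elementary than the paper's route through $G$-injectivity.
\item For (3)$\Rightarrow$(1), make explicit, as in Lemma \ref{vanishing-equivariant-ucp}, that a state vanishing on $\I$ is $G$-invariant on the operator system $\I+\I^*+\C\msp1_\A$. That is what lets $G$-injectivity apply.
\end{itemize}
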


The subject of irreducible inclusions of C*-algebras has attracted considerable interest in recent years. In \cite{rordam23}, R\o rdam introduced the notion of C*-irreducibility for inclusions of C*-algebras. We show that the C*-irreducibility of an inclusion can be characterized by strong minimality of a corresponding C*-dynamical system. 

\begin{athm}\label{c*-irreducibility-char}
Let ${\B\subseteq\A}$ be a unital inclusion of C*-algebras. The following are equivalent: 
\begin{enumerate}
\item ${\B\subseteq\A}$ is C*-irreducible. 
\item ${(\A,G)}$ is strongly minimal for every subgroup ${G\leq U(\B)}$ with linear span norm dense in $\B$, where the action is by unitary conjugation. 
\item ${(\A,G)}$ is strongly minimal for some subgroup ${G\leq U(\B)}$ with linear span norm dense in $\B$, where the action is by unitary conjugation. 
\end{enumerate}
\end{athm}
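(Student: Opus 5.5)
Recall Rørdam's definition: a unital inclusion ${\B\subseteq\A}$ is \emph{C*-irreducible} if every intermediate C*-algebra ${\B\subseteq\D\subseteq\A}$ is simple. The plan is to route everything through condition (2) of Theorem~\ref{strong-minimality-char}: a unital system ${(\A,G)}$ is strongly minimal if and only if every $G$-invariant unital C*-subalgebra of $\A$ is minimal, i.e.\ has no nontrivial $G$-invariant closed two-sided ideals. The whole proof then reduces to one translation. When $G\leq U(\B)$ acts by conjugation and $\Span G$ is norm dense in $\B$, the following two families coincide:
\begin{itemize}
\item the $G$-invariant unital C*-subalgebras $\D\subseteq\A$;
\item the intermediate algebras $\B\subseteq\D\subseteq\A$.
\end{itemize}
In addition, for such an intermediate $\D$, every closed two-sided ideal of $\D$ is automatically $G$-invariant.

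The second statement is immediate. If $\J\subseteq\D$ is a closed two-sided ideal and $\D\supseteq\B\supseteq G$, then $u\J u^*\subseteq\J$ for each $u\in G\subseteq\D$. So for intermediate $\D$, minimality of $(\D,G)$ is the same as simplicity of $\D$.

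For the first statement, any intermediate $\D$ is unital and is $\operatorname{Ad}(u)$-invariant for $u\in G\subseteq\D$. The converse inclusion $\B\subseteq\D$ for an arbitrary $G$-invariant unital subalgebra fails in general; for instance, $\C1$ is always $G$-invariant. The way around this is to replace $\D$ by the C*-algebra $\D'=C^*(\D\cup\B)$, which is intermediate and contains $\D$. The key observation is that a $G$-invariant closed left ideal of $\A$ is the same thing as a closed left ideal $\I$ with $u\I u^*\subseteq\I$. Since $\I u^*\subseteq\I$ for a unitary $u\in\A$, this is equivalent to $u\I\subseteq\I$, hence to $\B\I\subseteq\I$ by density of $\Span G$ and closedness. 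So the $G$-invariant closed left ideals are exactly the closed left ideals invariant under left multiplication by $\B$. This lets me work directly with left ideals and condition (1), rather than passing through subalgebras not containing $\B$.

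For (1)$\Rightarrow$(2), fix $G$ as in (2) and let $\I\subseteq\A$ be a $G$-invariant closed left ideal, i.e.\ closed, with $\A\I\subseteq\I$ and $\I\B\subseteq\I$. I would use the standard correspondence: the hereditary subalgebra $\I\cap\I^*$ and the algebra $\D=\{a\in\A:\I a\subseteq\I\}$. The set $\D$ is a closed unital subalgebra, but it need not be self-adjoint. Instead take $\D=\B+\overline{\I^*\I}$, or rather $C^*(\B\cup\I^*\I)$. Here $\overline{\I^*\I}$ is a closed two-sided ideal in this C*-algebra since $\B$ normalizes it: $\B\I^*\I\subseteq\I^*\I$ because $\I\B\subseteq\I$ gives $\B\I^*\subseteq\I^*$, and similarly on the right. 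Thus $\D$ is intermediate with ideal $\overline{\I^*\I}$. C*-irreducibility makes $\D$ simple, so $\overline{\I^*\I}$ is $0$ or $\D\ni1$. If $1\in\overline{\I^*\I}\subseteq\overline{\A\I}=\I$, then $\I=\A$; otherwise $\I^*\I=0$, hence $\I=0$. Therefore $(\A,G)$ is strongly minimal.

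The implication (2)$\Rightarrow$(3) is trivial, since $U(\B)$ itself has dense span: every element of a unital C*-algebra is a combination of four unitaries.

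For (3)$\Rightarrow$(1), let $\D$ be intermediate and take a proper nonzero closed two-sided ideal $\J\subseteq\D$. Consider the closed left ideal $\I=\overline{\A\J}$ of $\A$. Since $\J\B\subseteq\J$, we get $\I\B\subseteq\I$, so $\I$ is $G$-invariant, and it is nonzero. By strong minimality, $\I=\A$, so $1\in\overline{\A\J}$. The main obstacle is deriving a contradiction from this. I would pass to positive elements: $\overline{\A\J}=\A$ means the hereditary subalgebra generated by $\J$ in $\A$ is everything. Then there exists $x\in\J_+$ with $\|1-ax\|<1$ for some $a\in\A$, which gives $x^*a^*ax\geq$ something, and hence $x^2\geq\delta1$ fails to follow directly. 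The alternative I would try first is condition (4) of Theorem~\ref{strong-minimality-char}. Take a nonzero $x\in\J_+$. Then $\sum_j u_jxu_j^*\geq1$ for some $u_j\in G\subseteq\D$, and the left side lies in $\J$. So $\J$ contains an invertible element of $\D$, giving $\J=\D$, a contradiction. This cleanly avoids the obstacle, so $\D$ is simple.
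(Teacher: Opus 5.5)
Your proof is correct. For $(3)\Rightarrow(1)$ it follows the paper, but for $(1)\Rightarrow(2)$ it takes a genuinely different route. The paper proves the theorem in two lines by matching two fullness conditions. By Proposition~\ref{strong-minimality-local-char}, strong minimality of $(\A,G)$ means every nonzero element of $\A_+$ is $G$-full. R\o rdam's Proposition 3.7 (with his Lemma 3.5) characterizes C*-irreducibility by every nonzero positive element of $\A$ being full relative to $\B$.

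Your $(3)\Rightarrow(1)$ is exactly the easy half of this matching, proved inline: a $G$-full positive element of a nonzero ideal $\J\subseteq\D$ produces an invertible element of $\D$ inside $\J$. Incidentally, this direction only uses $G\subseteq\D$, not density of $\Span G$.

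For $(1)\Rightarrow(2)$, you bypass the hard half of R\o rdam's characterization altogether. You use that $G$-invariance of $\I$ amounts to $\I\B\subseteq\I$, form the intermediate algebra $\B+\overline{\Span}(\I^*\I)$, in which $\overline{\Span}(\I^*\I)\subseteq\I$ is a two-sided ideal, and let simplicity force $\I\in\{0,\A\}$. This is self-contained and elementary. Combined with Proposition~\ref{strong-minimality-local-char}, it even reproves the hard direction of R\o rdam's Proposition 3.7: C*-irreducibility forces every nonzero positive element to be $G$-full, hence full relative to $\B$. The paper's route is shorter and makes the parallel between $G$-fullness and relative fullness explicit, but it outsources exactly this step.

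One slip needs fixing: your key observation has left and right reversed. Since $\I$ is a left ideal, $u\I\subseteq\I$ is automatic, and $G$-invariance is equivalent to $\I u^*\subseteq\I$. Indeed, $\I u^*=u^*(u\I u^*)\subseteq u^*\I\subseteq\I$, and conversely $u\I u^*\subseteq u\I\subseteq\I$. So the $G$-invariant closed left ideals are those with $\I\B\subseteq\I$ (right multiplication), which is the condition you actually use afterwards.

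Two small points are also worth recording:
\begin{itemize}
\item $\I^*\I\,\I^*\I\subseteq\I^*\I$, since $\I\I^*\I\subseteq\A\I\subseteq\I$. This is needed for $\overline{\Span}(\I^*\I)$ to be an ideal of $C^*(\B\cup\I^*\I)$.
\item An element of $\D$ that is invertible in $\A$ is invertible in $\D$.
\end{itemize}
The opening detour through $C^*(\D\cup\B)$ and the abandoned $\overline{\A\J}$ attempt can simply be deleted.
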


In the second half of the paper, we examine strong minimality for reduced crossed products and tensor products. Our study of the former culminates in the following characterization. 

\begin{athm}\label{crossed-tensor-product-minimality-char}
Let ${(\A,G)}$ be a unital C*-dynamical system. The following are equivalent: 
\begin{enumerate}
\item ${(\A\rtimes_\lambda G,G)}$ is strongly minimal. 
\item ${(\A\otimes_\mathrm{min}C_\lambda^*(G),G)}$ is strongly minimal. 
\item $G$ is C*-simple and ${(\A,G)}$ is strongly minimal. 
\end{enumerate}
\end{athm}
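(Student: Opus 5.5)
The plan is to prove (1)$\Rightarrow$(3), (2)$\Rightarrow$(3) and (3)$\Rightarrow$(1), (3)$\Rightarrow$(2), with Theorem~\ref{strong-minimality-char} as the main tool. Throughout, $G$ acts on $\A\rtimes_\lambda G$ by $\alpha$ on $\A$ and by conjugation $\lambda_s(\cdot)\lambda_s^*$ on $C_\lambda^*(G)$. This action is inner, implemented by the unitaries $\lambda_s$. On $\A\otimes_{\min}C_\lambda^*(G)$ the action is the diagonal one, $\alpha_s\otimes\mathrm{Ad}\lambda_s$.

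\textbf{Necessity.} For (1)$\Rightarrow$(3) and (2)$\Rightarrow$(3) I would use criterion (2) of Theorem~\ref{strong-minimality-char}. Strong minimality passes to every $G$-invariant unital C*-subalgebra, because (2) of that theorem is inherited by subalgebras. In both (1) and (2), $\A$ and $C_\lambda^*(G)$ (conjugation action) sit as $G$-invariant unital subalgebras, so both $(\A,G)$ and $(C_\lambda^*(G),G)$ are strongly minimal. It then remains to show that strong minimality of $(C_\lambda^*(G),G)$ under conjugation forces C*-simplicity.

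For this step I would use criterion (3). Composing the canonical trace $\tau$ with the inclusion $\C\to C(\partial_FG)$ gives a $G$-equivariant ucp map, since $\tau$ is conjugation-invariant. This map is faithful, so it gives no information. Instead I would use Hamana's injectivity of $C(\partial_FG)$ to extend the equivariant ucp map $C_\lambda^*(G)\to C(\partial_FG)$ onto a non-faithful one. Concretely, if $G$ is not C*-simple, the Kennedy/Kalantar--Kennedy characterization gives a nontrivial proper ideal, or equivalently an equivariant ucp map $C_\lambda^*(G)\to C(\partial_FG)$ that differs from $\tau$. Such a map is supported in some amenable stabilizer, and I would show it is non-faithful.

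A cleaner route is to take a non-$\tau$ boundary map $\phi$ and apply condition (4) of Theorem~\ref{strong-minimality-char} directly. Alternatively, use Theorem~\ref{c*-irreducibility-char}: strong minimality of $(C_\lambda^*(G),G)$ under conjugation says exactly that $\C\subseteq C_\lambda^*(G)$ is C*-irreducible, which by R\o rdam means $C_\lambda^*(G)$ is simple. This second route is the one I would actually use, since it avoids boundary maps entirely.

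\textbf{Sufficiency, (3)$\Rightarrow$(1).} Take a nonzero positive $x\in\A\rtimes_\lambda G$. I aim to produce $s_1,\dots,s_n$ with $\sum_j s_j\msp x\geq1$ and then invoke criterion (4). Let $E$ be the faithful conditional expectation onto $\A$. Since $E(x)\neq0$ is positive, strong minimality of $(\A,G)$ gives $t_1,\dots,t_m$ with $\sum t_i E(x)\geq1$. The difficulty is to pass from $E(x)$ back to $x$ itself. Here I would use C*-simplicity: by Kennedy's averaging property (Powers-type averaging, in the form valid for reduced crossed products of C*-simple groups), $E(y)$ lies in the norm-closed convex hull of $\{\lambda_s y\lambda_s^*\}$. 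Note that $\lambda_s y\lambda_s^*$ is exactly the action of $s$ composed with $\alpha_s^{-1}$ on coefficients. Perturbing appropriately, I would obtain a finite convex combination with $\sum c_k s_k\msp y\geq \tfrac12 E(y)$ up to small error, applied to $y=\sum_i t_i\msp x$. Then I would clear the coefficients by rational approximation and the strict positivity margin. The main obstacle is that averaging for crossed products normally uses only inner conjugations by $\lambda_s$, whereas our action also twists $\A$. The resolution is that the action on the crossed product is exactly $\mathrm{Ad}\lambda_s$, so these inner conjugations are precisely group elements of the action, and Kennedy/Breuillard--Kalantar--Kennedy--Ozawa averaging for $\mathrm{Ad}\lambda_s$ applies directly.

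\textbf{Sufficiency, (3)$\Rightarrow$(2).} The tensor product case is identical in structure. I would use the slice expectation $\mathrm{id}\otimes\tau$ onto $\A$, together with the averaging $\tau(a)1\in\overline{\mathrm{conv}}\{\lambda_s a\lambda_s^*\}$ applied to the second leg. This gives $\mathrm{id}\otimes\tau(y)$ approximated by convex combinations of $(\mathrm{id}\otimes\mathrm{Ad}\lambda_s)(y)$, which again need to be matched to the diagonal action. I would handle the mismatch by first averaging the $\A$-leg with elements that fix the relevant expectation, or simply via the $G$-equivariant isomorphism $\A\rtimes_\lambda G\cong$ a corner argument. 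This reconciliation of the $\A$-twist with the averaging is the step I expect to be hardest.
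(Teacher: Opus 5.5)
Your proposal has two genuine gaps, both in the sufficiency direction.

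\textbf{First gap: $(3)\Rightarrow(1)$.} The step carrying all the weight is the averaging claim $E(y)\in\overline{\mathrm{conv}}\{\lambda_sy\lambda_s^*:s\in G\}$. As stated, this is false in general. The twist you noticed is exactly the reason: $E$ of a convex combination is $\sum_kc_k\,\alpha_{s_k}(E(y))$, and calling the action $\mathrm{Ad}\lambda_s$ does not remove this.

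For a concrete counterexample, let $G=\Fb_2$ act on $\A=\ell^\infty(G)$ by translation, fix $g\neq e$, and set $y=\delta_e+\lambda_g$. Suppose a convex combination $z$ of the $\lambda_sy\lambda_s^*$ satisfies $\norm{z-\delta_e}<\eps\leq\frac13$. Evaluating $E(z)=\sum_kc_k\delta_{s_k}$ at $e$ shows that the total weight on indices with $s_k=e$ exceeds $1-\eps$. The contraction $w\mapsto E(w\lambda_g^*)$ then gives $\norm{z-E(z)}>1-\eps$. On the other hand $\norm{z-E(z)}<2\eps\leq 1-\eps$, a contradiction.

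What your argument actually needs is a one-sided, off-diagonal statement: for $y\geq0$ with $E(y)\geq1$, there are $s_k\in G$ and convex weights $c_k$ with $\sum_kc_k\,s_k\msp y\geq\frac12$. Then $\sum_ks_k\msp y\geq\frac12$ follows directly, with no rational approximation. This is not Kennedy's or Breuillard--Kalantar--Kennedy--Ozawa's averaging theorem, which concerns $C_\lambda^*(G)$ with scalar coefficients. It has to be proved. One way:
\begin{enumerate}
\item Show that the weak*-closure of the $G$-orbit of any state on $\A\rtimes_\lambda G$ contains a state of the form $\psi\circ E$. To do this, extend the state to $(C(\partial_FG)\otimes_\mathrm{min}\A)\rtimes_\lambda G$, push its restriction to $C(\partial_FG)$ to a point mass by strong proximality, and use freeness of the action on $\partial_FG$.
\item Run a minimax argument to pass from states to an operator inequality.
\end{enumerate}
This is precisely the content of Lemma \ref{c*-simple-crossed-product-boundary-map-factors}. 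The paper uses it in dual form: every $G$-equivariant unital positive map $\Phi:\A\rtimes_\lambda G\to C(\partial_FG)$ equals $\Phi|_\A\circ E$, a composition of faithful maps. Theorem \ref{strong-minimality-char}(3) then applies with no averaging at all.

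\textbf{Second gap: $(3)\Rightarrow(2)$.} This is not proved; you leave the reconciliation of the twist open. The missing idea is that the action on $\B=\A\rtimes_\lambda G$ is inner. Writing $u_s\in\B$ for the implementing unitaries, the map $b\lambda_s\mapsto bu_s\otimes\lambda_s$ is a $G$-equivariant isomorphism $\B\rtimes_\lambda G\cong\B\otimes_\mathrm{min}C_\lambda^*(G)$ with the diagonal action (Lemma \ref{inner-crossed-tensor-product}). Applying $(3)\Rightarrow(1)$ twice makes $(\B\rtimes_\lambda G,G)$ strongly minimal, hence also $(\B\otimes_\mathrm{min}C_\lambda^*(G),G)$. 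Since $\A\otimes_\mathrm{min}C_\lambda^*(G)$ is a $G$-invariant unital subalgebra of the latter, hereditarity yields (2).

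\textbf{A slip in the necessity argument.} Your use of Theorem \ref{strong-minimality-char}(2) for hereditarity is fine. However, with the subgroup $\lambda(G)\leq U(C_\lambda^*(G))$, Theorem \ref{c*-irreducibility-char} concerns the inclusion $C_\lambda^*(G)\subseteq C_\lambda^*(G)$, not $\C\subseteq C_\lambda^*(G)$. The latter is never C*-irreducible for $G\neq\{e\}$, since $C^*(1,\lambda_g)$ is a non-simple intermediate algebra. More simply, every closed two-sided ideal of $C_\lambda^*(G)$ is $\mathrm{Ad}\lambda_s$-invariant, so minimality of $(C_\lambda^*(G),G)$ is already simplicity.
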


We also show that strong minimality is well-behaved with respect to the natural product action, provided one of the C*-algebras underlying the component C*-dynamical systems is type I. 

\begin{athm}\label{tensor-product-minimality}
Let ${(\A,G)}$ and ${(\B,H)}$ be unital C*-dynamical systems. If $\A$ is a type I C*-algebra, then ${(\A,G)}$ and ${(\B,H)}$ are strongly minimal if and only if ${(\A\otimes_\mathrm{min}\B,G\times H)}$ is strongly minimal. 
\end{athm}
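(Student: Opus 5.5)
Both directions go through the positive-element characterization in Theorem~\ref{strong-minimality-char}(4): a unital system is strongly minimal if and only if every nonzero $x\geq0$ has finitely many translates summing to at least $1$. The product action is $(s,t)\cdot(a\otimes b)=s\msp a\otimes t\msp b$ on $\A\otimes_\mathrm{min}\B$.

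\emph{Necessity.} Assume $(\A\otimes_\mathrm{min}\B,G\times H)$ is strongly minimal, and let $\I\subseteq\A$ be a nonzero proper $G$-invariant closed left ideal. Then the closed left ideal $\overline{\I\odot\B}$ of $\A\otimes_\mathrm{min}\B$ is nonzero and $G\times H$-invariant. It is proper because it lies in the kernel of the left-module map $\mathrm{id}\otimes\varphi$ composed with the quotient $\A\to\A/\I$, for any state $\varphi$ of $\B$; this composite sends $1\otimes1$ to the nonzero class of $1_\A$. The case of $\B$ is symmetric. This direction needs no type~I hypothesis.

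\emph{Sufficiency.} Assume both factors are strongly minimal, and fix a nonzero $z\in(\A\otimes_\mathrm{min}\B)_+$. We need $(s_j,t_j)$ with $\sum_j(s_j,t_j)\msp z\geq1$. The plan has three steps.

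\begin{enumerate}
\item \emph{Reduce to $z$ dominating an elementary tensor.} Since $\A$ is type~I, choose an irreducible representation $\pi$ of $\A$ with $(\pi\otimes\mathrm{id})(z)\neq0$. Because $\A$ is type~I, $\pi(\A)$ contains the compact operators, so we can find a rank-one projection $p=\pi(a)$-type element. Compressing, $(p\otimes1)(\pi\otimes\mathrm{id})(z)(p\otimes1)=p\otimes b$ with $b\in\B_+$ nonzero. Lift this back inside $\A\otimes\B$: pick $a\in\A_+$ with $\pi(a)$ close to $p$ and use functional calculus. The aim is to find $c\in\A\otimes\B$ with $c^*zc\approx a'\otimes b'$, with $a'\in\A_+$ and $b'\in\B_+$ nonzero. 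Up to $\eps$-errors, handled by cutting down with $f_\eps$, we can then arrange $c^*zc\geq a'\otimes b'$ for nonzero positives $a',b'$.
\item \emph{Move from $c^*zc$ back to $z$.} Invariant left ideals handle this: the closed left ideal $L$ generated by the $G\times H$-orbit of $z$ is invariant, and it contains $zc$. If it is proper, its hereditary part contains $c^*zc$ and hence $a'\otimes b'$. So it suffices to show that no proper invariant left ideal contains a nonzero elementary positive tensor. Equivalently, we may argue with condition~(1) rather than~(4).
\item \emph{Handle elementary tensors.} For $a\otimes b$, take $s_i$ with $\sum_i s_i\msp a\geq1$ and $t_k$ with $\sum_k t_k\msp b\geq1$. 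Then
\[\sum_{i,k}(s_i,t_k)\msp(a\otimes b)=\Big(\sum_i s_i\msp a\Big)\otimes\Big(\sum_k t_k\msp b\Big)\geq1\otimes1,\]
by positivity of tensor products of positive elements.
\end{enumerate}

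The main obstacle is step~1, extracting an elementary tensor below a compression of an arbitrary positive $z$. This is where type~I is essential: for general $\A$ one cannot cut down to a rank-one slice inside the algebra. The fallback if lifting $p$ is delicate is to use open projections, Theorem~\ref{strong-minimality-char}(5), in $\A^{**}\bar\otimes\B^{**}$. For type~I $\A$, minimal projections of $\A^{**}$ from the atomic part are open enough to give a nonzero $p\otimes q$-type open subprojection of the support of $z$. Finitely many translates of $p$ and $q$ then join to $1$.
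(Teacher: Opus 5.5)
Your necessity argument is correct, and more hands-on than the paper's appeal to Proposition~\ref{strong-minimality-hereditary}. The sufficiency argument, however, has a genuine gap, and it sits in step~2 rather than step~1.

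\textbf{The gap in step~2.} A closed left ideal $L$ is closed only under \emph{left} multiplication. So $z\in L$ gives $c^*z\in L$, not $zc\in L$, and $c^*zc$ need not lie in $L\cap L^*$. What you actually have is $c^*zc=yy^*$ with $y=c^*z^{1/2}\in L$, whereas $L$ contains $y^*y$. Passing from $yy^*$ to $y^*y$ preserves neither membership in a left ideal nor fullness. For instance, take the Toeplitz algebra $\mathcal{T}=C^*(S)$ with the trivial group, so every left ideal is invariant. Then $z=SS^*$ and $c=S$ give $zc=S$ and $c^*zc=1$. Yet the closed left ideal generated by $z$ is proper, since all its elements are annihilated on the right by $1-SS^*$. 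The reason you give involves no dynamics at all, this example shows it is false, and nothing replaces it.

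\textbf{The fallback fails too.} For $\A=\B=M_2$, the projection onto $(e_1\otimes e_1+e_2\otimes e_2)/\sqrt2$ is its own support and dominates no nonzero $p\otimes q$. So nonzero open projections of $(\A\otimes_\mathrm{min}\B)^{**}$ need not dominate elementary tensors, even when $\A$ is finite-dimensional.

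\textbf{Where type~I is not needed.} Your sketch of step~1 is incomplete: the lift through $\pi$ leaves an error term in the kernel of $\pi\otimes\iota$ that functional-calculus cutting does not remove. Still, the comparison it aims for (some $c$ with $c^*zc\geq a'\otimes b'\neq0$) holds for \emph{arbitrary} $\A$ and $\B$, via Kirchberg's slice lemma plus standard Cuntz comparison. So if step~2 were valid, your scheme would eliminate the type~I hypothesis altogether. The paper explicitly leaves that open in the remark after its proof, and ties it to R\o rdam's Question~7.3. This is a strong sign the real difficulty lies in step~2, not where you located it.

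\textbf{The missing idea.} Stop looking for elementary tensors inside the left ideal and argue with states instead.
\begin{enumerate}
\item Take the nonzero $(G\times H)$-invariant open projection $e$ supporting the invariant left ideal generated by $z$.
\item The slices $(\varphi\otimes\iota)^{**}(e)\in\B^{**}_\lsc$ and $(\iota\otimes\psi)^{**}(e)\in\A^{**}_\lsc$ are $H$- and $G$-invariant respectively, hence scalars by Proposition~\ref{strong-minimality-fixed-point-char}.
\item Comparing them shows $(\varphi\otimes\psi)^{**}(e)=\lambda$ for every product state. Moreover $\lambda=1$, because $e$ dominates a norm-one positive element that is almost normed by a state in $\A^*\otimes\B^*$ (Lemma~\ref{tensor-product-open-proj-dense}).
\item Type~I is used only in Lemma~\ref{type-I-norm-closure-lemma}: every irreducible representation of $\A\otimes_\mathrm{min}\B$ has the form $\rho\otimes\sigma$, so every pure state is a norm limit of elements of $\A^*\otimes\B^*$.
\item Hence $\tau^{**}(e)=1$ for all pure states $\tau$, which forces the open projection $e$ to equal $1$. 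Proposition~\ref{strong-minimality-nc-topology-char} then finishes the proof.
\end{enumerate}
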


Finally, we completely characterize strong minimality in the subhomogeneous case. 

\begin{athm}\label{subhomogeneous-minimality}
Let ${(\A,G)}$ be a unital C*-dynamical system. If $\A$ is a subhomogeneous C*-algebra, then ${(\A,G)}$ is strongly minimal if and only if the following conditions hold: 
\begin{enumerate}
\item $\A$ is homogeneous; 
\item ${(\smash{\widehat{\A}},G)}$ is minimal, where ${\smash{\widehat{\A}}}$ is the spectrum of $\A$; and 
\item $\A$ has no nontrivial $G$-invariant projections. 
\end{enumerate}
\end{athm}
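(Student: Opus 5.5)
We prove Theorem~\ref{subhomogeneous-minimality} using the characterization in Theorem~\ref{strong-minimality-char}(4): every nonzero $x\in\A_+$ has translates with $\sum_j s_j x\geq 1_\A$. Throughout, $\A$ is unital and subhomogeneous. Hence every irreducible representation is finite dimensional of dimension at most some $N$. The spectrum $\widehat{\A}=\Prim(\A)$ is compact, and $G$ acts on it by homeomorphisms.

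\emph{Necessity.} Suppose $(\A,G)$ is strongly minimal.

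Condition (3) holds because a $G$-invariant projection $p\neq 0,1$ generates the $G$-invariant closed left ideal $\A p$. This ideal is nonzero and proper, since $(1-p)\A p\ne$ everything, or more simply $1\notin\A p$.

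Condition (2) holds because strong minimality implies minimality. A $G$-invariant closed subset $F\subseteq\widehat{\A}$ gives the $G$-invariant two-sided ideal $\ker F$, which is in particular a left ideal. So $F=\emptyset$ or $F=\widehat{\A}$.

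For condition (1), let $d$ be the maximal dimension of irreducible representations. The set $\widehat{\A}_{\le d-1}=\{\pi:\dim\pi\le d-1\}$ is closed, by lower semicontinuity of $\pi\mapsto\dim\pi$. It is also $G$-invariant and proper, since the dimension $d$ is attained. By minimality this set is empty, so every irreducible representation has dimension $d$, and $\A$ is $d$-homogeneous.

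\emph{Sufficiency.} Assume (1)--(3). By Fell's theorem, $\A$ is the section algebra of a locally trivial $M_d$-bundle over the compact space $X=\widehat{\A}$. Take a nonzero $x\in\A_+$. We want translates whose sum is $\ge1$, i.e.\ invertible after scaling.

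The plan is to show that for each point $t\in X$ there is a finite sum $y_t=\sum_j s_jx$ whose fibre $y_t(t)$ is invertible in $M_d$. Invertibility is an open condition, so by compactness finitely many $y_{t_i}$ cover $X$. Their sum $y$ is then invertible everywhere, hence $y\ge\epsilon 1$, and rescaling by repeating terms gives $\sum s_jx\geq 1$.

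Now fix $t\in X$. By minimality on $X$, some translate $sx$ is nonzero at $t$. The goal is to enlarge the range of the fibre at $t$. Let $R\subseteq\mathbb{C}^d$ be the maximal range at $t$ of finite sums of translates of $x$. This range is determined by sums of supports, since for positive operators we have $\operatorname{ran}(a+b)=\operatorname{ran}a+\operatorname{ran}b$. Choose $y$ with $\operatorname{ran}y(t)=R$ of maximal rank $r$.

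The main obstacle is to show $r=d$ using (3). Consider the set of points $u$ where the maximal achievable rank is $r<d$. Over this set, the range projections of optimal sums should define a continuous, $G$-invariant projection field. The argument runs as follows.

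First, maximality forces the range projection $R_u$ at each point to be unique, because the sum of two optimal sums has range containing both ranges. This makes $u\mapsto R_u$ canonical, and hence $G$-equivariant.

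Second, by lower semicontinuity of rank, the set $\{u:\text{max rank}\ge r\}$ is open and $G$-invariant. If $r$ is the global minimum of the maximal ranks, its complement is closed and invariant, so it is empty by minimality. Hence the maximal rank is constant, equal to $r$, on all of $X$.

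Third, with constant rank, the range projection of an optimal sum $y$ is locally given by continuous functional calculus $f(y)$, with $f$ vanishing at $0$ and equal to $1$ on the spectrum bounded away from zero locally. Patching these pieces by uniqueness gives a global projection $p\in\A$ with $p(u)=R_u$. This $p$ is $G$-invariant, and nonzero because $x\ne0$.

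By (3), $p=1$, so $r=d$, which finishes the proof. The delicate step is the continuity and patching of $p$, specifically that the spectrum of $y(u)$ stays locally bounded away from $0$ on its support. This follows from constancy of rank together with continuity of eigenvalues.
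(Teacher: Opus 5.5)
Your proof is correct. Its core mechanism is the paper's:
\begin{itemize}
\item take fibrewise support projections;
\item use lower semicontinuity of rank, $G$-invariance and minimality of $(\widehat{\A},G)$ to force the rank to be constant;
\item use constant rank to make the projection field continuous, hence a genuine $G$-invariant projection in $\A$, which (3) rules out.
\end{itemize}

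The difference is in how the left ideal is handled. The paper argues by contradiction from an arbitrary nontrivial closed $G$-invariant left ideal $\I$. It therefore first needs Lemma~\ref{LeftHomog}, proved via pure states of $C_0(X,\A)$, to know that $\I$ is exactly cut out by its fibre projections, so that constant rank $0$ or $n$ forces $\I=\{0\}$ or $\I=\A$. It then works with Lemmas~\ref{HomogCanStruc} and~\ref{OpCloCon} inside the canonical representation in $C(X_\A,M_n)$.

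You instead go through $G$-fullness (Theorem~\ref{strong-minimality-char}(4)) and only look at ranges of finite sums of translates of a single $x$. Additivity of ranges of positive matrices gives the maximal range $R_u$ directly. The structure lemma for general left ideals is replaced by two elementary facts: a positive element invertible in every fibre is invertible, and compactness lets finitely many local sums cover $X$. Fell's bundle theorem plays the role of the canonical representation.

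This is more self-contained. The cost is that it leans on unitality, through the $G$-fullness criterion and the covering step. The paper's fibrewise description of left ideals is what it reuses for the non-unital Theorem~\ref{non-unital-subhomogeneous}. Both arguments produce an invariant projection of constant rank, so both also give Remark~\ref{subhomogeneous-remark}.

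One step needs rewording. In your ``Second'' step, write $m(u)=\dim R_u$. If $r$ is the global minimum of $m$, then $\{m\ge r\}=X$ trivially, so saying its complement is empty proves nothing. The correct statement is that every superlevel set $\{u: m(u)\ge k\}$ is open and $G$-invariant, hence equal to $\emptyset$ or $X$ by minimality, so $m$ is constant.

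Your patching step is fine, since Fell's theorem gives the full section algebra, but you can avoid it. An optimal sum $y_u$ has rank exactly $r$ near $u$: at least $r$ by lower semicontinuity, at most $r$ because $m\equiv r$. So by compactness one finite sum $y$ of such sums has range $R_v$ at every $v$. Its smallest nonzero eigenvalue is continuous and positive, hence bounded below by some $\delta>0$ on $X$. Then $p=f(y)$, with $f(0)=0$ and $f=1$ on $[\delta,\infty)$, is the required projection, and it is $G$-invariant because $sp=f(sy)$ and $sy$ has the same fibre ranges.
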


The remainder of this paper is organized as follows. In Section \ref{preliminary-section}, we fix our notation and recall the necessary preliminary material. In Section \ref{strong-minimality-section}, we contrast strong minimality with the accepted usual notion of minimality and prove various useful characterizations of the former. In Section \ref{c*-irreducibility-section}, we analyze the connection between C*-irreducible inclusions and strongly minimal C*-dynamical systems. In Section \ref{product-section}, we consider the behaviour of strongly minimal C*-dynamical systems with respect to taking tensor products and reduced crossed products. In Section \ref{example-section}, we examine various examples of strongly minimal C*-dynamical systems and obtain a characterization of this property for subhomogeneous flows. Finally, in Section \ref{non-unital-section}, we discuss our results in the non-unital setting. 
\medskip

\noindent\textbf{Acknowledgments.} The authors are grateful to Jashan Bal for mentioning the reference \cite{bb90}, to Matthew Kennedy for his support and encouragement, as well as for reading an earlier draft of the paper and providing helpful feedback, and to Jesse Peterson for asking a question that led to the consideration and inclusion of Remark \ref{non-type-I-product-minimality}. 

\section{Preliminaries}\label{preliminary-section}

\subsection{Notation}

In this subsection, we fix the notation that we will use throughout the paper. 
\medskip

If $X$ is a topological space, then we let ${\mathrm{Homeo}(X)}$ denote the group of homeomorphisms of $X$. Furthermore, if $X$ is compact and Hausdorff, then we let ${M_{1,+}(X)}$ denote the space of probability measures on $X$ and equip this space with the topology inherited from identifying it with the state space of ${C(X)}$ via the Riesz-Markov-Kakutani representation theorem (where the latter is endowed with the weak-* topology). 
\medskip

If $\H$ is a Hilbert space, then we let ${B(\H)}$ and ${K(\H)}$ denote the spaces of bounded and compact operators on $\H$, respectively. 
\medskip

If $\A$ is a C*-algebra, then we let ${S(\A)}$, ${P(\A)}$, and ${\Aut(\A)}$ denote the state space of $\A$, the set of pure states in ${S(\A)}$, and the automorphism group of $\A$, respectively. Moreover, we let ${\A_\mathrm{sa}}$ and ${\A_+}$ denote the set of self-adjoint elements in $\A$ and the set of positive elements in $\A$, respectively. We identify $\A$ with its image in the bidual ${\A^{**}}$ under the canonical embedding; abusively, we will suppress the notation specifying the embedding. We let ${\otimes_\mathrm{min}}$ denote the minimal tensor product of C*-algebras. Furthermore, if $\A$ is unital, then we let ${U(\A)}$ denote the unitary group of $\A$ and let ${\A^{**}_\lsc}$ and ${\A^{**}_\usc}$ denote the sets of lower and upper semi-continuous elements in ${\A^{**}}$, respectively (see Section \ref{nc-topology-subsection}). 
\medskip

If ${(\A,G,\alpha)}$ is a C*-dynamical system (see Section \ref{dynamics-subsection}) and ${X\subseteq\A}$, then we let ${X^G}$ denote the \emph{fixed point set} of $X$, i.e., the set defined by 
\eqnsp{X^G=\{x\in X:\alpha(s)(x)=x,\,\forall s\in G\}}

If $\A$ and $\B$ are C*-algebras and ${\varphi:\A\to\B}$ is a positive linear map, then we let ${\N_\varphi}$ denote the \emph{left kernel} of $\varphi$, i.e., the closed left ideal defined by 
\eqnsp{\N_\varphi=\{x\in\A:\varphi(x^*x)=0\}}

\subsection{Topological and noncommutative dynamics}\label{dynamics-subsection}

In this subsection, we give a brief overview of classical and noncommutative topological dynamics; the reader is referred to \cite{glasner76} for details on the former. 
\medskip

A \emph{topological dynamical system} ${(X,G,\alpha)}$ is a triple consisting of a topological space $X$ (always assumed nonempty herein), a discrete group $G$, and a group homomorphism ${\alpha:G\to\mathrm{Homeo}(X)}$. Henceforth, we omit the map $\alpha$ from the specification of such triples wherever so doing is unlikely to result in any ambiguity. Consistent with this omission, we will typically write simply ${sx}$ rather than ${\alpha(s)(x)}$, where ${s\in G}$ and ${x\in X}$. If $X$ is a compact Hausdorff space, then we refer to ${(X,G)}$ as a \emph{flow} and (in a slight abuse of terminology) refer to $X$ as a \emph{$G$-flow}. 
\medskip

If ${(X,G,\alpha)}$ and ${(Y,G,\beta)}$ are topological dynamical systems, then a map ${f:X\to Y}$ is said to be \emph{$G$-equivariant} if ${f\circ\alpha(s)=\beta(s)\circ f}$ for all ${s\in G}$. We say that $Y$ is a \emph{factor} of $X$ if there exists a continuous $G$-equivariant surjection of the latter onto the former. 
\medskip

If ${(X,G)}$ is a flow and ${Y\subseteq X}$ is a nonempty $G$-invariant closed subset, then $Y$ can be given the structure of a $G$-flow by endowing it with the restriction of the $G$-action, and in this case we say that $Y$ is a \emph{subflow} of $X$. 
\medskip

A topological dynamical system ${(X,G)}$ is \emph{minimal} if $X$ admits no nontrivial $G$-invariant closed subsets; equivalently, ${(X,G)}$ is minimal if ${\smash{\overline{Gx}}=X}$ for every ${x\in X}$. If ${(X,G)}$ is minimal, then in turn every factor of $X$ is also minimal; moreover, if ${f:Y\to X}$ is a continuous $G$-equivariant map, then $f$ is automatically surjective, and thus $X$ is a factor of $Y$. It is an easy consequence of Zorn's lemma that every flow admits a minimal subflow. 
\medskip

If $X$ is a $G$-flow, then there is a canonical induced action of $G$ on ${M_{1,+}(X)}$ endowing it with the structure of a $G$-flow given by 
\eqnsp{(s\mu)(f)=\mu(s^{-1}f)}
The flow $X$ is \emph{strongly proximal} if ${\smash{\overline{G\mu}}\cap\delta(X)\neq\emptyset}$ for every ${\mu\in M_{1,+}(X)}$, where ${\delta:X\hookrightarrow M_{1,+}(X)}$ denotes the embedding given by ${x\to\delta_x}$. 
\medskip

A flow ${(X,G)}$ is a \emph{boundary} if it is both minimal and strongly proximal; in this case, (in a slight abuse of terminology) we refer to $X$ as a \emph{$G$-boundary}. Every discrete group $G$ admits a boundary ${\partial_FG}$ which is universal in the sense that it factors onto every other $G$-boundary; this boundary is unique up to $G$-equivariant homeomorphism and is referred to as the \emph{Furstenberg boundary}. It was shown in \cite{kk17} that ${C(\partial_FG)}$ is isomorphic to the $G$-injective envelope of $\C$, where the latter is equipped with the trivial action (for the relevant details regarding injective envelopes, we refer the reader to \cite{hamana85}, \cite{bkko17}, \cite{kk17}, and \cite{kennedy20}). 
\medskip

A \emph{C*-dynamical system} ${(\A,G,\alpha)}$ is a triple consisting of a C*-algebra $\A$ (always assumed to be nonzero herein), a discrete group $G$, and a group homomorphism ${\alpha:G\to\Aut(\A)}$. As in the case of topological dynamical systems, we omit the map $\alpha$ from the specification of such triples where so doing is unlikely to result in any ambiguity and typically write simply ${sx}$ rather than ${\alpha(s)(x)}$, where ${s\in G}$ and ${x\in\A}$. If $\A$ is unital, then we also refer to ${(\A,G)}$ as a \emph{C*-flow}; moreover, if $\A$ is a W*-algebra, then we refer to ${(\A,G)}$ as a \emph{W*-dynamical system} or \emph{W*-flow}. 
\medskip

If ${(\A,G,\alpha)}$ and ${(\B,G,\beta)}$ are C*-dynamical systems, then a linear map ${\varphi:\A\to\B}$ is said to be \emph{$G$-equivariant} if it intertwines the actions $\alpha$ and $\beta$, i.e., if ${\varphi\circ\alpha(s)=\beta(s)\circ\varphi}$ for all ${s\in G}$. 
\medskip

If ${(\A,G)}$ is a (unital) C*-dynamical system and ${\B\subseteq\A}$ is a (unital) $G$-invariant C*-subalgebra, then we say that $\B$ is a \emph{factor} of $\A$. 

\subsection{Akemann's noncommutative topology}\label{nc-topology-subsection}

Throughout this subsection, we let $\A$ denote a unital C*-algebra. We summarize here the essential facts regarding the noncommutative topology introduced in \cite{akemann69}; the reader is referred to \cite{pedersen18} for a more comprehensive treatment of the elementary theory and to \cite{brown88} for a selection of more advanced topics. 
\medskip

An element ${x\in(\A^{**})_\mathrm{sa}}$ is said to be \emph{lower} \emph{semi-continuous} if ${\smash{x\rvert_{S(\A)}}}$ is a lower semi-continuous real-valued function; equivalently, $x$ is lower semi-continuous if there exists an increasing net ${(x_\alpha)}$ in ${\A_\mathrm{sa}}$ such that ${(x_\alpha)\nearrow x}$. Analogously, an element ${x\in(\A^{**})_\mathrm{sa}}$ is \emph{upper semi-continuous} if ${-x}$ is lower semi-continuous. A projection ${e\in\A^{**}}$ is said to be \emph{open} (respectively, \emph{closed}) if it is lower semi-continuous (respectively, upper semi-continuous) in this sense. The \emph{closure} of a projection is the least closed projection dominating it, and a projection is \emph{dense} if its closure is equal to ${1_{\A^{**}}}$. 
\medskip

Quite possibly the most important result in the noncommutative topology is the following triad of one-to-one correspondences: 
\begin{enumerate}
\item between norm closed left ideals of $\A$ and open projections in ${\A^{**}}$, given by 
\eqnsp{\I=\A^{**}e\cap\A\longleftrightarrow e=1_{\overline{\I}^\mathrm{w-*}}}
\item between weak-* closed faces of ${S(\A)}$ and closed projections in ${\A^{**}}$, given by 
\eqnsp{F=\{\psi\in S(\A):f(\psi)=1\}\longleftrightarrow f=\supp(F)}
\item between norm closed left ideals of $\A$ and weak-* closed faces of ${S(\A)}$, given by 
\eqnsp{\I=\bigcap_{\psi\,\in\,F}\N_\psi\longleftrightarrow F=\{\psi\in S(\A):\I\subseteq\N_\psi\}}
\end{enumerate}

\section{Strong minimality}\label{strong-minimality-section}

In this section, we recall the notion of strong minimality (originally introduced in \cite{ll75} under the name \emph{minimality}) and obtain various characterizations of this property. 

\begin{defn}
A C*-dynamical system ${(\A,G)}$ is \emph{strongly minimal} if $\A$ has no proper non-empty $G$-invariant closed left ideals. 
\end{defn}

\begin{remark}
If $\A$ is unital, then the assumption of closedness can be removed. 
\end{remark}

This should be compared with the usual notion of a minimal C*-dynamical system, which can be obtained by replacing left ideals with two-sided ideals in the above definition. The two notions evidently coincide when $\A$ is commutative; in this case, $\A$ is $G$-equivariantly isomorphic to ${C(X)}$ for some $G$-flow $X$ via Gelfand duality, and $\A$ is (strongly) minimal if and only if $X$ is minimal. However, the two notions can differ wildly when $\A$ is noncommutative, which is best illustrated by revisiting the examples from the introduction. 

\begin{example}
Let $\A$ be a simple unital C*-algebra and $G$ be a discrete group acting on $\A$ by the trivial action. It is clear in this case that ${(\A,G)}$ is minimal, since the simplicity of $\A$ implies that it admits no nontrivial closed two-sided ideals. However, nontrivial (unital) C*-algebras have an abundance of closed left ideals even when they are simple; in particular, ${(\A,G)}$ is strongly minimal if and only if ${\A\cong\C}$. 
\end{example}

Replacing minimality with strong minimality therefore resolves the topological dynamical issue presented by the failure of minimality to satisfactorily capture the dynamics of the action. 
\medskip

We recall here that a W*-flow ${(\M,G)}$ is said to be \emph{ergodic} if ${\M^G=\C}$ (i.e., if the action has no nontrivial fixed points). The next example shows that replacing minimality with strong minimality also resolves the operator algebraic issue presented by minimality not agreeing with ergodicity in the setting of ``noncommutative finite topological/measure spaces''. 

\begin{example}\label{motivating-example-ii}
Let $\A$ be a finite-dimensional C*-algebra and $G$ be a discrete group acting on $\A$. Let ${\I\subseteq\A}$ be a closed left ideal; then ${\I=\A e}$ for some (unique) projection ${e\in\A}$. It is easily seen that $\I$ is $G$-invariant if and only if $e$ is $G$-invariant, and therefore ${(\A,G)}$ is strongly minimal if and only if $\A$ has no nontrivial $G$-invariant projections, which is equivalent to ${(\A,G)}$ being ergodic. 
\end{example}

The correspondences outlined in Section \ref{nc-topology-subsection} preserve $G$-invariance going in every direction; we therefore easily obtain the following characterization of strong minimality by passing through these correspondences. 

\begin{prop}\label{strong-minimality-nc-topology-char}
Let ${(\A,G)}$ be a C*-flow. The following are equivalent: 
\begin{enumerate}
\item ${(\A,G)}$ is strongly minimal. 
\item There are no nontrivial $G$-invariant weak-* closed faces of ${S(\A)}$. 
\item There are no nontrivial $G$-invariant open (equivalently, closed) projections in ${\A^{**}}$. 
\end{enumerate}
\end{prop}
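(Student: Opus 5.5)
The plan is to pass through the three correspondences recalled in Section \ref{nc-topology-subsection}, after first checking that each of them is $G$-equivariant. The action of $G$ on $\A$ extends to $\A^{**}$ by taking second adjoints. For $s\in G$, the extension $s^{**}$ is a weak-* continuous $*$-automorphism of $\A^{**}$ that restricts to $s$ on $\A$. The action on $S(\A)$ is $(s\psi)(x)=\psi(s^{-1}x)$, which is an affine weak-* homeomorphism.

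For the first correspondence, I would argue as follows. If $\I$ is a closed left ideal with open projection $e$, then $\overline{s\I}^{\mathrm{w-*}}=s^{**}\bigl(\overline{\I}^{\mathrm{w-*}}\bigr)$. The right-hand side is the weak-* closed left ideal $\A^{**}s^{**}(e)$, so $s\I$ corresponds to $s^{**}(e)$. Moreover, $s^{**}$ preserves order and increasing nets from $\A_{\mathrm{sa}}$, so it maps open projections to open projections. Since $1-s^{**}(e)=s^{**}(1-e)$, the same holds for closed projections.

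For the third correspondence, a direct computation gives $\N_{s\psi}=s\N_\psi$. Consequently,
\[
\bigcap_{\psi\in sF}\N_\psi=s\Bigl(\bigcap_{\psi\in F}\N_\psi\Bigr),
\]
so $sF$ corresponds to $s\I$.

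Because the correspondences are bijections that commute with the $G$-actions, an object is $G$-invariant exactly when its image is. Triviality must also be preserved. The ideals $\{0\}$ and $\A$ correspond to the open projections $0$ and $1$. Under the third correspondence they match the faces $S(\A)$ and $\emptyset$; here I take "nontrivial face" to mean proper and nonempty. For closed projections, $0$ and $1$ are matched via $f=1-e$. I would also make sure the correspondences are order-reversing or order-preserving as appropriate, so that the trivial ends line up correctly.

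With this in place, the equivalence (1)$\iff$(2) follows from the third correspondence, and (1)$\iff$(3) follows from the first. The parenthetical equivalence between open and closed projections in (3) comes from the fact that $e$ is a $G$-invariant open projection if and only if $1-e$ is a $G$-invariant closed projection.

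The only step needing any care is identifying the weak-* closure of $s\I$ with $s^{**}$ applied to the closure of $\I$. This holds because $s^{**}$ is a weak-* homeomorphism, being the adjoint of the isometric isomorphism $\psi\mapsto\psi\circ s$ on $\A^*$. Everything else is routine bookkeeping.
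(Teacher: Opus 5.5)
Your proposal is correct and takes the same route as the paper, which simply observes that the three correspondences of Section~\ref{nc-topology-subsection} preserve $G$-invariance in every direction and passes through them. You also supply the details the paper leaves implicit, and these checks are sound: the identities $\overline{s\I}^{\mathrm{w-*}}=\A^{**}s^{**}(e)$ and $\N_{s\psi}=s\N_\psi$, the matching of the trivial objects, and the passage between open and closed projections via $e\mapsto 1-e$.
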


Simplicity for C*-algebras can be characterized either intrinsically (in terms of structure which is internal to the algebra, e.g., by the absence of nontrivial closed two-sided ideals) or extrinsically (in terms of the algebra's relations to other C*-algebras, e.g., by the faithfulness of every outgoing nonzero *-homomorphism). As (strong) minimality is a dynamical simplicity-type condition, it is natural to seek extrinsic characterizations analogous to those for simplicity. 
\medskip

Extrinsic characterizations in this sense have been obtained for the case where $G$ is amenable by Laison and Laison (\cite[Theorem 3]{ll75}), by Avitzour (\cite[Proposition 4.3]{avitzour84}), and by Longo and Peligrad (\cite[Lemma 2.5]{lp84}, \cite[Corollary 2.6]{lp84}). We extend these results beyond the amenable setting and obtain a general characterization by using the machinery of the injective envelope. 
\medskip

We will require the following lemma. 

\begin{lemma}\label{vanishing-equivariant-ucp}
Let ${(\A,G)}$ be a C*-flow. If ${\I\subseteq\A}$ is a proper closed $G$-invariant left ideal, then there exists a $G$-equivariant unital completely positive linear map ${\Phi:\A\to C(\partial_FG)}$ such that ${\I\subseteq\N_\Phi}$. 
\end{lemma}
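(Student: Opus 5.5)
Given a proper closed $G$-invariant left ideal $\I\subseteq\A$, the plan is to produce a single state annihilating $\I$ in the sense of left kernels. We then spread it over $G$ into a $G$-equivariant unital completely positive map into $\ell^\infty(G)$, and finally compress into $C(\partial_FG)$ using $G$-injectivity.

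\emph{Step 1.} By the correspondence between closed left ideals and weak-* closed faces recalled in Section \ref{nc-topology-subsection}, the face
\[F=\{\psi\in S(\A):\I\subseteq\N_\psi\}\]
is nonempty, since $\I$ is proper. It is $G$-invariant because $\I$ is. Fix $\psi\in F$. Concretely, $1\notin\I$, so one can take a state vanishing on the hereditary cone $\{x^*x : x\in\I\}$; this is the standard fact.

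\emph{Step 2.} Define the Poisson-type map $\P_\psi:\A\to\ell^\infty(G)$ by $\P_\psi(a)(t)=\psi(t^{-1}a)$. It is unital and completely positive, being a product of states. With the left-translation action on $\ell^\infty(G)$ it is $G$-equivariant: $\P_\psi(sa)(t)=\psi(t^{-1}sa)=\P_\psi(a)(s^{-1}t)$. For $x\in\I$ and $t\in G$ we have $t^{-1}x\in\I$ by invariance, so $\P_\psi(x^*x)(t)=\psi((t^{-1}x)^*(t^{-1}x))=0$. Hence $\I\subseteq\N_{\P_\psi}$.

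\emph{Step 3.} Use that $C(\partial_FG)$ is the $G$-injective envelope of $\C$ \cite{kk17}. The unital map $\C\to\ell^\infty(G)$ extends, by $G$-injectivity of $C(\partial_FG)$, to a $G$-equivariant ucp map $\ell^\infty(G)\to C(\partial_FG)$. Concretely, this comes from choosing a point $x\in\partial_FG$ and setting $f\mapsto(sx\mapsto\ldots)$; abstractly, one simply uses injectivity. Call this map $E$.

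Set $\Phi=E\circ\P_\psi$, which is $G$-equivariant ucp. It remains to check $\I\subseteq\N_\Phi$. Since $\P_\psi(x^*x)=0$ for $x\in\I$, we get $\Phi(x^*x)=E(0)=0$ directly, with no Schwarz-inequality argument needed.

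The only potential obstacle is justifying the existence of a $G$-equivariant ucp map $\ell^\infty(G)\to C(\partial_FG)$. This follows from $G$-injectivity of $C(\partial_FG)$ in the category of $G$-operator systems, applied to the inclusion $\C\subseteq\ell^\infty(G)$ and the unital map $\C\to C(\partial_FG)$. Everything else is formal.
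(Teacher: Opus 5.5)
Your proof is correct, but it reaches the boundary by a different route than the paper. The paper takes a state $\psi$ with $\I\subseteq\N_\psi$ and uses Cauchy--Schwarz to get $\I\subseteq\ker\psi$. It then observes that $\psi$ is $G$-invariant on the $\A$-dependent operator system $\S=\{x+y^*+\lambda 1_\A : x,y\in\I,\ \lambda\in\C\}$, since it simply returns $\lambda$. Applying $G$-injectivity of $C(\partial_FG)$ to $\S\subseteq\A$, it extends $\psi|_\S$ directly to $\Phi:\A\to C(\partial_FG)$, which then vanishes on $\I\supseteq\{x^*x : x\in\I\}$.

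You instead apply $G$-injectivity once, universally, to $\C\subseteq\ell^\infty(G)$. This gives a single $G$-equivariant ucp map $E:\ell^\infty(G)\to C(\partial_FG)$ that does not depend on $\A$ or $\I$. The vanishing then comes from the Poisson-map computation, the same one used in Proposition~\ref{strong-minimality-poisson-map-char}. Your route avoids both the Cauchy--Schwarz step and the auxiliary operator system. It also shows that the boundary maps witnessing failure of strong minimality can always be taken of the form $E\circ\P_\psi$. This makes the link between Proposition~\ref{strong-minimality-extrinsic-char} and Proposition~\ref{strong-minimality-poisson-map-char} explicit, and it is close in spirit to how the paper later uses $\ell^\infty(G)$ in the non-unital setting. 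The paper's route yields a $\Phi$ that actually extends $\psi|_\S$, so it vanishes on all of $\I+\I^*$, but that extra information is not needed here.

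You should delete the ``concrete'' description of $E$ via a point of $\partial_FG$ and its orbit. Evaluating along an orbit gives a map $C(\partial_FG)\to\ell^\infty(G)$, namely the Poisson map of a point mass, which goes in the opposite direction. No pointwise formula of that kind produces a map $\ell^\infty(G)\to C(\partial_FG)$; for amenable $G$, where $\partial_FG$ is a point, such a map is exactly an invariant mean. Its existence rests on $G$-injectivity alone, which you invoke correctly.
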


\begin{proof}
As $\I$ is a proper closed left ideal, there exists a pure state ${\psi\in P(\A)}$ such that ${\I\subseteq\N_\psi}$ (this follows immediately from the correspondence between left ideals and weak-* closed faces outlined in Section \ref{nc-topology-subsection}). Let ${\S\subseteq\A}$ be the $G$-invariant operator system defined by 
\eqnsp{\S=\{x+y^*+\lambda\msp1_\A:x,y\in\I,\hsp\lambda\in\C\}}
The Cauchy-Schwarz inequality implies that ${\I\subseteq\ker(\psi)}$, and so ${\psi\rvert_\S}$ is $G$-invariant. Identifying $\C$ with its canonical image inside ${C(\partial_FG)}$, it then follows that $\psi$ extends to a $G$-equivariant unital completely positive linear map ${\Phi:\A\to C(\partial_FG)}$. 
\end{proof}

We are now prepared to prove our first extrinsic characterization of strong minimality. 

\begin{prop}\label{strong-minimality-extrinsic-char}
Let ${(\A,G)}$ be a C*-flow. The following are equivalent: 
\begin{enumerate}
\item ${(\A,G)}$ is strongly minimal. 
\item Every $G$-equivariant positive linear map ${\Phi:\A\to\B}$ is either faithful or identically zero, for every C*-flow ${(\B,G)}$. 
\item Every $G$-equivariant unital completely positive linear map ${\Phi:\A\to\B}$ is faithful, for every C*-flow ${(\B,G)}$. 
\item Every $G$-equivariant unital completely positive linear map ${\Phi:\A\to C(\partial_FG)}$ is faithful. 
\end{enumerate}
\end{prop}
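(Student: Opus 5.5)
I will prove the cycle of implications (1)$\Rightarrow$(2)$\Rightarrow$(3)$\Rightarrow$(4)$\Rightarrow$(1). Here a positive map $\Phi$ is called faithful if $\Phi(x^*x)=0$ forces $x=0$, i.e.\ if ${\N_\Phi=0}$.

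\emph{(1)$\Rightarrow$(2).} Let ${\Phi:\A\to\B}$ be a $G$-equivariant positive linear map into a C*-flow ${(\B,G)}$. The key object is the left kernel $\N_\Phi$. It is a closed left ideal. Indeed, for a positive map one has ${(ax)^*(ax)\leq\norm{a}^2x^*x}$, so ${\Phi((ax)^*(ax))\leq\norm{a}^2\Phi(x^*x)=0}$. Closure under addition follows from ${(x+y)^*(x+y)\leq2(x^*x+y^*y)}$. Closedness follows from continuity of positive maps.

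Equivariance makes $\N_\Phi$ $G$-invariant, since ${\Phi((sx)^*(sx))=\Phi(s(x^*x))=s\Phi(x^*x)}$. By strong minimality, ${\N_\Phi}$ is either $\{0\}$ or $\A$. In the first case $\Phi$ is faithful. In the second case ${1_\A\in\N_\Phi}$, so ${\Phi(1_\A)=0}$. For any ${x\in\A_+}$ we have ${0\leq x\leq\norm{x}1_\A}$, hence ${\Phi(x)=0}$. Positive elements span $\A$, so ${\Phi\equiv0}$.

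\emph{(2)$\Rightarrow$(3).} A unital map satisfies ${\Phi(1_\A)=1_\B\neq0}$, since $\B$ is a nonzero C*-algebra by convention. So $\Phi$ is not identically zero, and (2) gives that it is faithful.

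\emph{(3)$\Rightarrow$(4).} This is the special case ${\B=C(\partial_FG)}$, which is a C*-flow under the action induced from the action on $\partial_FG$.

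\emph{(4)$\Rightarrow$(1).} This is the only step with real content, and Lemma \ref{vanishing-equivariant-ucp} supplies it. Suppose $\I$ is a proper nonzero $G$-invariant closed left ideal. The lemma gives a $G$-equivariant unital completely positive map ${\Phi:\A\to C(\partial_FG)}$ with ${\I\subseteq\N_\Phi}$. Then $\N_\Phi\neq0$, so $\Phi$ is not faithful, contradicting (4).

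The lemma's proof rests on the $G$-injectivity of ${C(\partial_FG)}$, which lets the $G$-invariant state on the operator system $\S$ extend equivariantly to all of $\A$. With the lemma taken as given, the remaining work is only the routine verification above that $\N_\Phi$ is a $G$-invariant closed left ideal.
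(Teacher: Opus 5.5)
Your proof is correct and follows the paper's argument essentially step for step: the left kernel $\N_\Phi$ is a $G$-invariant closed left ideal, which gives (1)$\Rightarrow$(2); the middle implications are immediate; and (4)$\Rightarrow$(1) is exactly Lemma \ref{vanishing-equivariant-ucp}. You also check that $\N_\Phi$ is a closed left ideal for an arbitrary positive map, via $(ax)^*(ax)\leq\norm{a}^2x^*x$, and you treat non-unital positive $\Phi$ in (1)$\Rightarrow$(2); both are details the paper leaves implicit.
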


\begin{proof}
Suppose that ${(1)}$ holds. Let ${(\B,G)}$ be a C*-flow and ${\Phi:\A\to\B}$ be a $G$-equivariant unital positive linear map; then ${\N_\Phi}$ is a closed $G$-invariant left ideal of $\A$, whence ${\N_\Phi=\{0\}}$ or ${\N_\Phi=\A}$. The former implies that $\Phi$ is faithful, while the latter implies that ${\Phi=0}$; the implication ${(1)\Rightarrow(2)}$ therefore holds. The implications ${(2)\Rightarrow(3)}$ and ${(3)\Rightarrow(4)}$ are clear. Suppose now that ${(4)}$ holds. If ${\I\subseteq\A}$ is a proper closed $G$-invariant left ideal, then it follows by Lemma \ref{vanishing-equivariant-ucp} that there exists a $G$-equivariant unital completely positive linear map ${\Phi:\A\to C(\partial_FG)}$ such that ${\I\subseteq\N_\Phi}$. As $\Phi$ is by assumption faithful, it follows that ${\I=\{0\}}$; thus ${(\A,G)}$ is strongly minimal, and therefore the implication ${(4)\Rightarrow(1)}$ holds. 
\end{proof}

When $G$ is amenable, the Furstenberg boundary is a singleton (see e.g., \cite[Theorem 3.1]{glasner76} or \cite[Corollary 3.17]{kk17}), and thus the equivalence ${(1)\Leftrightarrow(4)}$ in the above proposition recovers the previously obtained characterizations of \cite{ll75}, \cite{avitzour84}, and \cite{lp84}. 

\begin{remark}
It can be seen by looking at the proof of Lemma \ref{vanishing-equivariant-ucp} that ${C(\partial_FG)}$ may be replaced in Condition ${(4)}$ of the above proposition with $\B$ for any injective C*-flow ${(\B,G)}$. However, it can be shown that ${C(\partial_F G)}$ is essentially the heart of Condition ${(4)}$: if ${(\B,G)}$ is a C*-flow for which $\B$ can substitute for ${C(\partial_FG)}$ in Condition ${(4)}$, then ${C(\partial_FG)}$ embeds completely isometrically into $\B$ via a $G$-equivariant unital completely positive linear map. We give a short proof of this assertion. Let ${X=\partial_FG\times\partial_FG}$ be equipped with the natural diagonal $G$-action; then ${(X,G)}$ is not minimal unless ${\partial_FG}$ is a singleton, since the diagonal ${\{(x,x):x\in\partial_FG\}\subseteq X}$ is a subflow. If $G$ is amenable, then ${C(\partial_F G)\cong\C}$ and the claimed assertion is clear. Suppose that $G$ is not amenable; then ${\partial_FG}$ is not a singleton. This implies that ${(C(X),G)}$ is not strongly minimal, and therefore there exists a (non-faithful) $G$-equivariant unital completely positive linear map ${\Phi:C(X)\to\B}$. Composing $\Phi$ with the canonical embedding ${\pi:C(\partial_FG)\hookrightarrow C(\partial_FG)\otimes_\mathrm{min}C(\partial_FG)\cong C(X)}$ into the first tensor factor then yields a $G$-equivariant unital completely positive linear map ${\Psi:C(\partial_FG)\to\B}$, and it follows from the $G$-essentiality of the inclusion ${\C\subseteq C(\partial_FG)}$ that $\Psi$ is completely isometric. 
\end{remark}

It was shown in \cite[Proposition 4.2]{avitzour84} that strong minimality passes to factors. We can now give a short alternative proof of this fact using the above characterization of strong minimality. 

\begin{prop}\label{strong-minimality-hereditary}
Let ${(\A,G)}$ be a strongly minimal C*-flow. If ${\B\subseteq\A}$ is a factor, then ${(\B,G)}$ is strongly minimal. 
\end{prop}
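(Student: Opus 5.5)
We will use the extrinsic characterization in Proposition \ref{strong-minimality-extrinsic-char}, specifically the equivalence ${(1)\Leftrightarrow(4)}$. By that result it suffices to show that every $G$-equivariant unital completely positive linear map ${\Phi:\B\to C(\partial_FG)}$ is faithful.

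The key step is to extend such a $\Phi$ to all of $\A$ while keeping equivariance. Since ${C(\partial_FG)}$ is the $G$-injective envelope of $\C$, it is in particular $G$-injective in the category of operator systems with $G$-equivariant unital completely positive maps (as recalled from \cite{kk17} and \cite{hamana85}). The inclusion ${\B\subseteq\A}$ is a $G$-equivariant unital complete isometry, because $\B$ is a unital $G$-invariant C*-subalgebra. Hence $\Phi$ extends to a $G$-equivariant unital completely positive linear map ${\widetilde{\Phi}:\A\to C(\partial_FG)}$ with ${\widetilde{\Phi}\rvert_\B=\Phi}$. This is the same mechanism used in the proof of Lemma \ref{vanishing-equivariant-ucp}, so it is available at the level of generality already used in the paper.

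Now ${(\A,G)}$ is strongly minimal, so the implication ${(1)\Rightarrow(4)}$ of Proposition \ref{strong-minimality-extrinsic-char} shows that $\widetilde{\Phi}$ is faithful. Let ${x\in\B}$ with ${\Phi(x^*x)=0}$. Then ${\widetilde{\Phi}(x^*x)=0}$, so ${x=0}$, and $\Phi$ is faithful. Applying ${(4)\Rightarrow(1)}$ of Proposition \ref{strong-minimality-extrinsic-char} to ${(\B,G)}$, we conclude that ${(\B,G)}$ is strongly minimal.

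The only point requiring care is the $G$-injectivity of ${C(\partial_FG)}$ needed for the equivariant extension. This is the one nontrivial input, and we take it as standard, exactly as in Lemma \ref{vanishing-equivariant-ucp}.
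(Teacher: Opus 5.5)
Your proposal is correct and follows essentially the same route as the paper. The paper likewise extends $\Phi$ from $\B$ to a $G$-equivariant unital completely positive map on $\A$ using the $G$-injectivity of $C(\partial_FG)$. It then concludes via the equivalence $(1)\Leftrightarrow(4)$ of Proposition \ref{strong-minimality-extrinsic-char}, exactly as you do.
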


\begin{proof}
Let ${\Phi:\B\to C(\partial_FG)}$ be any $G$-equivariant unital completely positive linear map; it is then an immediate consequence of the $G$-injectivity of ${C(\partial_FG)}$ that $\Phi$ extends to a $G$-equivariant unital completely positive linear map ${\Psi:\A\to C(\partial_FG)}$. The claim now follows from Proposition \ref{strong-minimality-extrinsic-char}. 
\end{proof}

The preceding proposition demonstrates that strong minimality is a hereditary property. In fact, it can be shown that hereditarity of minimality in the usual sense characterizes strong minimality. To prove this, we will require the following lemma. 

\begin{lemma}\label{completely-positive-map-faithful-domain}
Let $\A$ and $\B$ be C*-algebras. A contractive completely positive linear map ${\varphi:\A\to\B}$ is faithful if and only if it is faithful on its multiplicative domain. 
\end{lemma}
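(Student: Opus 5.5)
The forward direction is trivial: if $\varphi$ is faithful, then in particular it is faithful on positive elements of its multiplicative domain. So the plan concerns the converse. Let $\M_\varphi$ denote the multiplicative domain of $\varphi$, and assume $\varphi$ is faithful on $(\M_\varphi)_+$. We must show that $\varphi(x^*x)=0$ forces $x=0$, i.e.\ that the left kernel $\N_\varphi$ is zero.

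First I would recall the standard description of the multiplicative domain via the Schwarz inequality $\varphi(x)^*\varphi(x)\leq\varphi(x^*x)$, valid for contractive completely positive maps. It gives
\[
\M_\varphi=\{a\in\A:\varphi(a^*a)=\varphi(a)^*\varphi(a),\ \varphi(aa^*)=\varphi(a)\varphi(a)^*\},
\]
and for $a\in\M_\varphi$, $b\in\A$ one has $\varphi(ab)=\varphi(a)\varphi(b)$ and $\varphi(ba)=\varphi(b)\varphi(a)$. This is Choi's theorem, which I would cite. The key observation is that every $x$ with $\varphi(x^*x)=0$ has $\varphi(x)=0$, by the Schwarz inequality. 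Hence $0\leq\varphi(x)^*\varphi(x)\leq\varphi(x^*x)=0$, so the "left" half of the defining equality holds automatically.

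To land in $\M_\varphi$ I would pass to the positive element $y=x^*x$, or better $y=(x^*x)^{1/2}$, and check that it lies in $\M_\varphi$.

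\emph{Main obstacle.} We know $\varphi(x^*x)=0$ and want $\varphi(y^*y)=\varphi(y)^*\varphi(y)$ for $y=(x^*x)^{1/2}$. Here $y^*y=x^*x$, so the left side is $0$, and the Schwarz inequality gives $\varphi(y)^*\varphi(y)\leq\varphi(y^2)=0$, so the right side is $0$ too. Since $y$ is self-adjoint, $yy^*=y^*y$, so both defining conditions hold with both sides zero. Thus $y\in\M_\varphi$, $y\geq0$, and $\varphi(y)=0$. Faithfulness on the multiplicative domain then gives $y=0$, hence $x^*x=0$ and $x=0$.

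The only delicate point is the exact meaning of "faithful on its multiplicative domain." I read it as: for every $a\in\M_\varphi$, $\varphi(a^*a)=0$ implies $a=0$. Under that reading, apply it to $a=y$: since $a^*a=x^*x$ and $\varphi(x^*x)=0$, we get $y=0$. In fact one can apply it even more directly to $a=x$ once we know $x\in\M_\varphi$. That also holds: $\varphi(x^*x)=0=\varphi(x)^*\varphi(x)$, and for the other condition, $\varphi(xx^*)$ need not vanish, but one-sided membership (the left multiplicative domain) suffices for Choi's bimodule argument. Using the self-adjoint $y$ avoids this subtlety entirely.
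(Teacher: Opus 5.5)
Your argument is correct and is essentially the paper's proof. Both replace $x\in\N_\varphi$ by $|x|=(x^*x)^{1/2}$, use the Kadison--Schwarz inequality to place $|x|$ in the multiplicative domain with $\varphi(|x|)=0$, and then invoke faithfulness there; the paper phrases the last step via $|x|^{1/2}\in\N_\varphi$, which makes no difference.

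You should drop the closing aside, though. There $x$ itself need not lie in the multiplicative domain: take $\varphi(a)=a_{11}$ on $M_2$ and $x=e_{12}$, so that $\varphi(x^*x)=0$ but $\varphi(xx^*)=1$. One-sided membership does not let you apply a hypothesis stated for the two-sided multiplicative domain.
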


\begin{proof}
Let $\D$ denote the multiplicative domain of $\varphi$ and suppose that ${\varphi\rvert_\D}$ is faithful. Let ${x\in\N_\varphi}$; then ${\abs{x}\in\N_\varphi}$ as well, and thus it may be assumed without loss of generality that ${x\geq0}$. It follows by the Kadison-Schwarz inequality that 
\eqnsp{0\leq\varphi(x)^*\,\varphi(x)\leq\varphi(x^*x)=0}
and thus ${x\in\D}$, wherefore in turn ${\smash{x^{1/2}}\in\D}$ as well. The above equality also implies that ${\varphi(x)=0}$; it therefore follows that ${\smash{x^{1/2}}\in\N_\varphi}$ and so ${\smash{x^{1/2}}=0}$. This implies that ${x=0}$, and thus $\varphi$ is faithful. The converse is clear. 
\end{proof}

\begin{prop}\label{strong-minimality-hereditary-minimality-char}
Let ${(\A,G)}$ be a C*-flow. The following are equivalent: 
\begin{enumerate}
\item ${(\A,G)}$ is strongly minimal. 
\item Every factor ${\B\subseteq\A}$ is strongly minimal. 
\item Every factor ${\B\subseteq\A}$ is minimal. 
\end{enumerate}
\end{prop}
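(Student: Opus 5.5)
The plan is to prove the cycle $(1)\Rightarrow(2)\Rightarrow(3)\Rightarrow(1)$. The implication $(1)\Rightarrow(2)$ is exactly Proposition \ref{strong-minimality-hereditary}. For $(2)\Rightarrow(3)$, note that a strongly minimal C*-flow is minimal, since every closed two-sided ideal is in particular a closed left ideal. Hence every factor, being strongly minimal by $(2)$, is minimal.

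The substance is $(3)\Rightarrow(1)$, which I will prove through the extrinsic characterization in Proposition \ref{strong-minimality-extrinsic-char}. Let ${\Phi:\A\to C(\partial_FG)}$ be a $G$-equivariant unital completely positive linear map; it suffices to show that $\Phi$ is faithful.

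Let $\D\subseteq\A$ be the multiplicative domain of $\Phi$. It is a unital C*-subalgebra of $\A$. It is $G$-invariant, because for ${x\in\D}$ and ${s\in G}$, equivariance gives
\[\Phi((sx)^*(sx))=s\,\Phi(x^*x)=s\bigl(\Phi(x)^*\Phi(x)\bigr)=\Phi(sx)^*\,\Phi(sx),\]
and likewise ${\Phi((sx)(sx)^*)=\Phi(sx)\,\Phi(sx)^*}$. So $\D$ is a factor of $\A$, and by $(3)$ it is minimal. The restriction ${\Phi\rvert_\D}$ is a $G$-equivariant unital *-homomorphism, so its kernel is a $G$-invariant closed two-sided ideal of $\D$. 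That kernel is proper because ${\Phi(1_\A)=1}$, so minimality forces it to be zero. Thus $\Phi$ is faithful on $\D$, and Lemma \ref{completely-positive-map-faithful-domain} (a ucp map is contractive) shows that $\Phi$ is faithful on $\A$. Proposition \ref{strong-minimality-extrinsic-char} then yields strong minimality of ${(\A,G)}$.

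The only step needing care is the $G$-invariance of the multiplicative domain, which uses that each $s\in G$ acts on both $\A$ and $C(\partial_FG)$ by *-automorphisms intertwined by $\Phi$. This is routine, and I expect no real obstacle: the key idea is simply to apply hypothesis $(3)$ to the multiplicative domain.
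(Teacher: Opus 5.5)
Your proposal is correct and follows the paper's own argument: $(1)\Rightarrow(2)$ by Proposition \ref{strong-minimality-hereditary}, $(2)\Rightarrow(3)$ trivially, and $(3)\Rightarrow(1)$ by applying minimality to the multiplicative domain of a $G$-equivariant unital completely positive map into $C(\partial_FG)$, then invoking Lemma \ref{completely-positive-map-faithful-domain} and Proposition \ref{strong-minimality-extrinsic-char}. The paper leaves the $G$-invariance of the multiplicative domain and the properness of the kernel implicit; your proposal checks both explicitly.
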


\begin{proof}
The implication ${(1)\Rightarrow(2)}$ is Proposition \ref{strong-minimality-hereditary}, while the implication ${(2)\Rightarrow(3)}$ is clear. The implication ${(3)\Rightarrow(1)}$ follows from Proposition \ref{strong-minimality-extrinsic-char} in conjunction with Lemma \ref{completely-positive-map-faithful-domain}. 
\end{proof}

Let ${(\A,G)}$ be a C*-flow. We recall here that the \emph{Poisson map} ${\P_\psi:\A\to\ell^\infty(G)}$ associated to a bounded linear functional ${\psi\in\A^*}$ is the $G$-equivariant bounded linear map defined by 
\eqnsp{\P_\psi(x)(s)=\psi(s^{-1}x)}
It is not difficult to see that ${\P_\psi}$ is unital (resp., positive) if and only if $\psi$ is unital (resp., positive). 
\medskip

When ${\A=C(X)}$ for some $G$-flow $X$, certain dynamical properties of ${(X,G)}$ are characterized by conditions on the Poisson maps: in particular, ${(X,G)}$ is minimal if and only if ${\P_\psi}$ is isometric for every pure state ${\psi\in P(\A)}$, while ${(X,G)}$ is a boundary if and only if ${\P_\psi}$ is isometric for every state ${\psi\in S(\A)}$. The former is equivalent to ${\P_\psi}$ being faithful for every pure state ${\psi\in P(\A)}$; this characterization of (strong) minimality carries over to the noncommutative setting. 

\begin{prop}\label{strong-minimality-poisson-map-char}
A C*-flow ${(\A,G)}$ is strongly minimal if and only if ${\P_\psi:\A\to\ell^\infty(G)}$ is faithful for every pure state ${\psi\in P(\A)}$. 
\end{prop}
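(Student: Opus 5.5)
The forward direction is immediate from Proposition \ref{strong-minimality-extrinsic-char}. If ${(\A,G)}$ is strongly minimal and ${\psi\in P(\A)}$, then ${\P_\psi:\A\to\ell^\infty(G)}$ is a $G$-equivariant unital positive linear map into the C*-flow ${(\ell^\infty(G),G)}$ with the left translation action. By condition ${(2)}$ of that proposition it is either faithful or zero. Since ${\P_\psi(1_\A)=1}$, it must be faithful. Alternatively, one can argue directly: ${\N_{\P_\psi}}$ is a closed $G$-invariant left ideal not containing ${1_\A}$, hence it is zero.

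For the converse I argue by contraposition. Suppose ${\I\subseteq\A}$ is a proper nonzero closed $G$-invariant left ideal. By the correspondence between closed left ideals and weak-* closed faces recalled in Section \ref{nc-topology-subsection}, the face ${F=\{\psi\in S(\A):\I\subseteq\N_\psi\}}$ is nonempty, since $\I$ is proper. Being a nonempty weak-* closed face of the compact convex set ${S(\A)}$, $F$ has an extreme point $\psi$ by Krein--Milman. Because $F$ is a face, $\psi$ is extreme in ${S(\A)}$, i.e. ${\psi\in P(\A)}$. This is exactly the fact already used at the start of the proof of Lemma \ref{vanishing-equivariant-ucp}.

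Now ${\I\subseteq\N_\psi}$, and $G$-invariance of $\I$ gives ${s^{-1}x\in\I\subseteq\N_\psi}$ for every ${x\in\I}$ and ${s\in G}$. Hence, for ${x\in\I}$,
\[
\P_\psi(x^*x)(s)=\psi\bigl((s^{-1}x)^*(s^{-1}x)\bigr)=0
\]
for all ${s\in G}$, so ${\I\subseteq\N_{\P_\psi}}$. Picking a nonzero ${x\in\I}$, the element ${x^*x}$ is a nonzero positive element with ${\P_\psi(x^*x)=0}$. Therefore ${\P_\psi}$ is not faithful for this pure state, which proves the converse.

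The only step needing care is producing a \emph{pure} state whose left kernel contains $\I$, rather than merely a state. This is handled by the face/Krein--Milman argument above. No step should be a real obstacle.
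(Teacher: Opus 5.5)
Your proposal is correct and follows the paper's own proof. The forward direction applies Proposition \ref{strong-minimality-extrinsic-char} to the unital $G$-equivariant map ${\P_\psi}$, and the converse picks a pure state $\psi$ with ${\I\subseteq\N_\psi}$ and uses ${\P_\psi(x^*x)(s)=\psi((s^{-1}x)^*(s^{-1}x))=0}$. Your face/Krein--Milman argument spells out the existence of that pure state, which the paper simply cites from the ideal--face correspondence.
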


\begin{proof}
Suppose that ${(\A,G)}$ is strongly minimal. Let ${\psi\in P(\A)}$; then ${\P_\psi}$ is a $G$-equivariant unital completely positive linear map, hence faithful by Proposition \ref{strong-minimality-extrinsic-char}. Conversely, suppose that ${\P_\psi}$ is faithful for every pure state ${\psi\in P(\A)}$. Let ${\I\subseteq\A}$ be a proper closed $G$-invariant left ideal; then there exists some ${\psi\in P(\A)}$ such that ${\I\subseteq\N_\psi}$, and it follows that 
\eqnsp{\P_\psi(x^*x)(s)=\psi(s^{-1}(x^*x))=\psi((s^{-1}x)^*\hsp(s^{-1}x))=0}
for all ${x\in\I}$ and ${s\in G}$. This implies that ${\P_\psi(x^*x)=0}$ for all ${x\in\I}$, wherefore ${\I=\{0\}}$. 
\end{proof}

\begin{remark}
The conclusion that ${\P_\psi}$ is faithful for every pure state ${\psi\in P(\A)}$ in the preceding proposition cannot be improved to ${\P_\psi}$ being isometric as in the commutative setting. Indeed, if $G$ is a discrete group and ${\pi:G\to U(M_n)}$ is an irreducible representation, then Example \ref{motivating-example-ii} implies that ${(M_n,G)}$ is strongly minimal, where the action is by conjugation, but if $G$ is finite and ${n\geq2}$, then ${\P_\psi}$ is not isometric for any pure state ${\psi\in P(M_n)}$: if it were, this would yield a finite set of vector states which norms ${M_n}$, which is impossible. 
\end{remark}

The characterizations of strong minimality we have obtained thus far have largely been ``global'' in nature, in the sense that they are framed in terms of the overall structure of the C*-flow itself or in terms of its relation to other C*-flows. It is convenient to also have characterizations which are ``local'', i.e., which are framed in terms of a single element. One characterization of this type is the fixed point property of the equivalence ${(2)\Leftrightarrow(3)}$ in \cite[Proposition 4.1]{avitzour84}; we record below the statement and proof of this result for the reader's convenience. 

\begin{prop}\label{strong-minimality-fixed-point-char}
A C*-flow ${(\A,G)}$ is strongly minimal if and only if ${(\A^{**}_\lsc)^G=\R}$. 
\end{prop}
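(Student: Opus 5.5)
We prove the statement by passing through Proposition \ref{strong-minimality-nc-topology-char}, which identifies strong minimality with the absence of nontrivial $G$-invariant open projections in ${\A^{**}}$. Here $G$ acts on ${\A^{**}}$ by the biduals of the automorphisms ${\alpha(s)}$. These bidual maps are normal *-automorphisms that extend the action on $\A$ and preserve the order. Consequently they map increasing nets in ${\A_\mathrm{sa}}$ to increasing nets in ${\A_\mathrm{sa}}$, so ${\A^{**}_\lsc}$ is $G$-invariant and the fixed point set ${(\A^{**}_\lsc)^G}$ makes sense. The scalars ${\R1_{\A^{**}}}$ are clearly fixed and lower semi-continuous, so in both directions only the nontrivial inclusion needs proof.

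For the direction assuming ${(\A^{**}_\lsc)^G=\R}$, we derive strong minimality directly. Every open projection is lower semi-continuous by definition. A $G$-invariant open projection $e$ is therefore a scalar multiple of $1$ and also a projection, so ${e\in\{0,1\}}$. Proposition \ref{strong-minimality-nc-topology-char} then gives strong minimality.

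For the direction assuming strong minimality, let ${x\in(\A^{**}_\lsc)^G}$. Since $x$ is bounded, we may add a large scalar and rescale; this preserves lower semi-continuity and $G$-invariance. So we may assume ${0\leq x\leq1}$ and aim to show $x$ is scalar. We use the spectral projections ${e_t=\chi_{(t,\infty)}(x)}$ for ${t\in\R}$. The key fact is that each $e_t$ is open. To see this, take ${f_n(\lambda)=\min(1,n(\lambda-t)_+)}$, which is continuous, increasing and nonnegative, with ${f_n\nearrow\chi_{(t,\infty)}}$ pointwise. Then ${f_n(x)\nearrow e_t}$ strongly, so it suffices that each ${f_n(x)}$ is lower semi-continuous. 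For this we would cite the standard result from Pedersen's book or \cite{brown88} that operator monotone, or suitably monotone continuous, functions preserve lower semi-continuity. An alternative is to use the fact that for lower semi-continuous $x$ and ${t\in\R}$, the range projection of ${(x-t)_+}$ is open, which is the spectral-projection form of Akemann's theory. This technical point is the main obstacle; I would try the second formulation first, since it is the statement usually recorded in the literature.

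Given that each $e_t$ is open, $G$-invariance follows from functional calculus, because the automorphisms are normal and fix $x$. Strong minimality then forces ${e_t\in\{0,1\}}$ for every $t$. The map ${t\mapsto e_t}$ is decreasing, so there is a threshold ${c=\sup\{t:e_t=1\}}$. Since ${e_t=1}$ for ${t<c}$ and ${e_t=0}$ for ${t>c}$, the spectral measure of $x$ is concentrated at $c$. Hence ${x=c\,1}$, which completes the proof.
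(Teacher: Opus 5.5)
Your reverse direction is correct and matches the paper. The forward direction has a genuine gap. It rests on the claim that for a lower semi-continuous $x$ every spectral projection $e_t=\chi_{(t,\infty)}(x)$ is open. That is false in general: requiring all of these projections to be open is a strictly stronger form of semicontinuity. So neither of the justifications you propose can work.

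Here is a counterexample. Let $\A=C(\Nb\cup\{\infty\},M_2)$ and
\[
D=\begin{pmatrix}1&0\\0&-10\end{pmatrix},\qquad y=\begin{pmatrix}2&-3\\-3&2\end{pmatrix}.
\]
Then $y-D\geq0$, and $y$ has eigenvalue $-1$ on $\xi=(1,1)/\sqrt2$ and eigenvalue $5$ on $\xi^\perp$. Let $x_N\in\A$ equal $y$ at the points $n<N$ and $D$ at all other points, including $\infty$.
\begin{itemize}
\item The sequence $(x_N)$ is increasing, so $x=\sup_N x_N$ lies in $\A^{**}_\lsc$.
\item The normal extensions $\mathrm{ev}_n^{**}$ of the point evaluations commute with Borel functional calculus. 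Hence $\mathrm{ev}_n^{**}(e_0)=\chi_{(0,\infty)}(y)=P_{\xi^\perp}$ for $n\in\Nb$, and $\mathrm{ev}_\infty^{**}(e_0)=\chi_{(0,\infty)}(D)=e_{11}$.
\item The states $\phi_n=\langle\mathrm{ev}_n(\cdot)\xi,\xi\rangle$ converge weak-* to $\phi_\infty$. But $\phi_n(e_0)=0$ for all $n$, while $\phi_\infty(e_0)=1/2$.
\end{itemize}
So $e_0$ is not lower semi-continuous, i.e.\ not open.

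This breaks both of your proposed routes:
\begin{itemize}
\item Since $e_0=\sup_n f_n(x)$, some $f_n(x)$ must also fail to be lower semi-continuous. Your $f_n$ are monotone but not operator monotone, so they need not preserve increasing nets.
\item The range projection of $(x-t)_+$ is precisely $e_t$, so the second formulation fails too.
\end{itemize}

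The missing idea is that you only need the bottom level, where compactness does the work. Since $x$ is lower semi-continuous on the weak-* compact set $S(\A)$, it attains its minimum $\lambda$. The set $F=\{\psi\in S(\A):\psi(x)\leq\lambda\}$ is then a nonempty weak-* closed face; it is a face because $\lambda$ is the minimum of an affine function. It is also $G$-invariant, so Proposition~\ref{strong-minimality-nc-topology-char} gives $F=S(\A)$, and hence $x=\lambda 1_{\A^{**}}$. This is exactly the paper's proof.

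In your language: $\chi_{\{\lambda\}}(x)$ is the support projection of $F$. It is therefore a nonzero $G$-invariant closed projection, so it equals $1_{\A^{**}}$. The openness of the other $e_t$ is never needed.
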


\begin{proof}
Suppose that ${(\A,G)}$ is strongly minimal. Let ${f\in(\A^{**}_\lsc)^G}$, let ${\lambda=\inf\{f(\psi):\psi\in S(\A)\}}$, and let ${F\subseteq S(\A)}$ be the nonempty weak-* closed face defined by 
\eqnsp{F=\{\psi\in S(\A):f(\psi)\leq\lambda\}}
As $F$ is $G$-invariant, Proposition \ref{strong-minimality-nc-topology-char} implies that ${F=S(\A)}$; then ${f(\psi)=\lambda}$ for all ${\psi\in S(\A)}$, and thus it follows that ${f=\lambda\msp1_{\A^{**}}}$. Conversely, if ${(\A^{**}_\lsc)^G=\R}$, then Proposition \ref{strong-minimality-nc-topology-char} evidently implies that ${(\A,G)}$ is strongly minimal, since open projections in the bidual are lower semi-continuous. 
\end{proof}

\begin{cor}\label{strong-minimality-fixed-point-cor}
If ${(\A,G)}$ is a strongly minimal C*-flow, then ${\A^G=\C}$. In particular, if ${(\M,G)}$ is a strongly minimal W*-flow, then ${(\M,G)}$ is ergodic. 
\end{cor}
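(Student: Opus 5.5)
The corollary follows almost immediately from Proposition \ref{strong-minimality-fixed-point-char}, using that $\A$ sits inside ${\A^{**}_\lsc}$ on the self-adjoint level. First take ${x\in\A^G}$ and split it as ${x=a+ib}$ with ${a=(x+x^*)/2}$ and ${b=(x-x^*)/(2i)}$. Because each ${\alpha(s)}$ is a $*$-automorphism, $a$ and $b$ are self-adjoint and $G$-invariant. A self-adjoint element of $\A$ is lower semi-continuous in ${\A^{**}}$: one can take the constant increasing net at that element, or note that it restricts to a continuous affine function on ${S(\A)}$. Hence ${a,b\in(\A^{**}_\lsc)^G=\R}$, and so ${x\in\C\msp1_\A}$. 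The reverse inclusion ${\C\subseteq\A^G}$ holds because automorphisms of a unital algebra fix the unit. This gives ${\A^G=\C}$.

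For the W*-flow statement, apply the first part with ${\A=\M}$. A W*-flow is in particular a C*-flow, since $\M$ is unital. So strong minimality of ${(\M,G)}$ gives ${\M^G=\C}$, which is exactly the definition of ergodicity recalled before Example \ref{motivating-example-ii}.

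The only point that needs checking is the identification of $\A$ with its canonical image in ${\A^{**}}$. It must be compatible with the action, so that $G$-invariance of $a$ in $\A$ means $G$-invariance in ${\A^{**}}$ under the bidual action. This holds because the bidual action is defined as the double transpose, which restricts to the original action on $\A$. With that in place the argument is a direct specialisation of Proposition \ref{strong-minimality-fixed-point-char}, and I expect no real obstacle.
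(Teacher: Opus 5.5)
Your proof is correct and takes the paper's approach: the paper gives no separate proof, treating the corollary as an immediate consequence of Proposition \ref{strong-minimality-fixed-point-char}, and your argument (self-adjoint elements of $\A$ are lower semi-continuous in $\A^{**}$, and a fixed point splits into $G$-invariant real and imaginary parts) fills in exactly that step. The W*-flow statement then follows directly from the definition of ergodicity, as you say.
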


\begin{remark}\label{ergodic-non-minimal-remark}
The converse of Corollary \ref{strong-minimality-fixed-point-cor} is false even when $\A$ is commutative. This can be seen through the following example: let $G$ be an infinite discrete group, let ${\A=\ell^\infty(G)}$, and let $G$ act on $\A$ by left translation. It is clear that ${\A^G=\C}$, but ${c_0(G)}$ is a nontrivial closed $G$-invariant two-sided ideal, which presents an obvious obstruction to (strong) minimality of ${(\ell^\infty(G),G)}$. 
\end{remark}

While Proposition \ref{strong-minimality-fixed-point-char} gives a useful local characterization of strong minimality, we would like ideally to also have a characterization on the level of $\A$ itself rather than its bidual. Remark \ref{ergodic-non-minimal-remark} demonstrates that hoping for a simple fixed point characterization on this level is somewhat overly optimistic. Instead, we draw inspiration from R\o rdam's notion of relative fullness (see \cite{rordam23}) and introduce a natural dynamical extension of the same. 

\begin{defn}
Let ${(\A,G)}$ be a C*-flow. An element ${x\in\A_+}$ is \emph{G-full} if there exists a collection of elements ${s_1,\dots,s_n\in G}$ (possibly with repetition) such that ${\sum_{j\,=\,1}^n s_j\msp x\geq1_\A}$. 
\end{defn}

\begin{remark}
It is easily seen that ${x\in\A_+}$ is $G$-full if and only if there exist ${s_1,\dots,s_n\in G}$ such that ${\sum_{j\,=\,1}^n s_j\msp x}$ is invertible (cf. \cite[Lemma 3.5]{rordam23}). 
\end{remark}

The next lemma gives the essential link between our local condition ($G$-fullness) and our global condition (strong minimality). 

\begin{prop}\label{strong-minimality-local-char-lemma}
Let ${(\A,G)}$ be a C*-flow. An element ${x\in\A_+}$ is $G$-full if and only if the closed $G$-invariant left ideal generated by $x$ is equal to $\A$. 
\end{prop}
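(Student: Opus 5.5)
The plan is to prove the two implications separately. Throughout, $\J$ denotes the closed $G$-invariant left ideal generated by $x$. Since $\J$ is a $G$-invariant left ideal containing $x$, it contains every $s\msp x$, and hence every finite sum $\sum_j s_j\msp x$. It also contains $\A y$ for each such sum $y$.

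\emph{Forward direction.} Suppose $x$ is $G$-full, so there are $s_1,\dots,s_n\in G$ with $y=\sum_{j=1}^n s_j\msp x\geq1_\A$. Then $y$ is positive with spectrum contained in $[1,\infty)$, so $y$ is invertible. By the observation above $y\in\J$, and therefore $1_\A=y^{-1}y\in\J$. Since $\J$ is a left ideal containing $1_\A$, it follows that $\J=\A$.

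\emph{Converse.} Suppose $\J=\A$. First describe $\J$ explicitly. Let $\J_0$ be the set of finite sums $\sum_k a_k\,(t_k\msp x)$ with $a_k\in\A$ and $t_k\in G$. This is a $G$-invariant left ideal, because $s(a\,t\msp x)=(s\msp a)(st\msp x)$. Hence $\J=\overline{\J_0}$.

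Since $1_\A\in\overline{\J_0}$ and the invertible elements form an open set, $\J_0$ contains an invertible element. This gives an invertible $z=\sum_{k=1}^m a_k\,(t_k\msp x)$. We now convert this into positivity. We have $z^*z$ invertible, so $z^*z\geq\eps 1_\A$ for some $\eps>0$. By the operator Cauchy--Schwarz inequality for row/column sums,
\[
z^*z=\Bigl(\sum_k a_k\,(t_k\msp x)\Bigr)^*\Bigl(\sum_k a_k\,(t_k\msp x)\Bigr)\leq m\sum_{k=1}^m (t_k\msp x)\,a_k^*a_k\,(t_k\msp x)\leq mC\sum_{k=1}^m (t_k\msp x)^2,
\]
where $C=\max_k\norm{a_k}^2$. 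So $\sum_k t_k\msp(x^2)\geq \eps/(mC)\,1_\A$, which shows that $x^2$ is $G$-full in the invertibility sense.

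It remains to pass from $x^2$ back to $x$. Since $\norm{x}\,x\geq x^2$, we get $\sum_k t_k\msp x\geq \eps/(mC\norm{x})\,1_\A$, so $\sum_k t_k\msp x$ is invertible. By the remark preceding the proposition, $x$ is then $G$-full; concretely, one repeats each $t_k$ a suitable number of times to scale the lower bound up to $1_\A$.

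\emph{Main obstacle.} The delicate step is the converse: turning membership of an invertible element in the algebraic ideal into a lower bound on a positive sum of translates. This is handled by the matrix Cauchy--Schwarz estimate $(\sum_k b_k)^*(\sum_k b_k)\leq m\sum_k b_k^*b_k$, together with the bound $x^2\leq\norm{x}\,x$.
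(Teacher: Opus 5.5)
Your proof is correct and follows the same route as the paper. In the converse you find an invertible element in the algebraic $G$-invariant left ideal and dominate its square modulus by a positive combination of translates of $x$ through a Cauchy--Schwarz-type estimate, which is what the paper does; the only difference is bookkeeping: the paper uses generators $w\,(s x^{1/2})$ and pairwise expansions so the bound lands directly on the $s_j x$, while you use generators $a_k\,(t_k x)$, the global inequality $(\sum_k b_k)^*(\sum_k b_k)\leq m\sum_k b_k^*b_k$, and then $x^2\leq\norm{x}\,x$, which is arguably a bit cleaner.
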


\begin{proof}
Let ${\I\subseteq\A}$ denote the closed $G$-invariant left ideal generated by $x$. If $x$ is $G$-full, then there exist ${s_1,\dots,s_n\in G}$ such that ${\sum_{j\,=\,1}^n s_j\msp x}$ is invertible; therefore $\I$ contains an invertible element, whence ${\I=\A}$. Conversely, suppose that ${\I=\A}$. Let ${\J\subseteq\A}$ be the left ideal defined by 
\eqnsp{\J=\Span(\{w\hsp(sx^{1/2}):w\in\A,\hsp s\in G\})}
As ${\I\subseteq\smash{\overline{\J}}}$, there exists an invertible element ${y\in\J}$. Now let ${s_1,\dots,s_n\in G}$ and ${w_1,\dots,w_n\in\A}$ be elements such that 
\eqnsp{y=\sum_{j\,=\,1}^n w_j\hsp(s_j\msp x^{1/2})}
For ${j,k=1,\dots,n}$, let ${z_{j,k}=w_k^*\hsp w_j}$; then 
\eqnsp{(z_{j,k}\hsp(s_j\msp x^{1/2})-s_k\msp x^{1/2})^*\hsp(z_{j,k}\hsp(s_j\msp x^{1/2})-s_k\msp x^{1/2})\geq0}
As ${(z_{j,k})^*=z_{k,j}}$, it follows by expanding and rearranging the above inequality that 
\begin{align*}
(s_j\msp x^{1/2})\hsp z_{k,j}\hsp(s_k\msp x^{1/2})+(s_k\msp x^{1/2})\hsp z_{j,k}\hsp(s_j\msp x^{1/2})&\leq(s_j\msp x^{1/2})\hsp\abs{z_{j,k}}^2\hsp(s_j\msp x^{1/2})+s_k\msp x
\\&\leq\norm{z_{j,k}}^2\hsp s_j\msp x+s_k\msp x
\end{align*}
for ${j,k=1,\dots,n}$, whence 
\eqnsp{y^*y=\Re(y^*y)=\sum_{j,k\,=\,1}^n\Re((s_k\msp x^{1/2})\hsp z_{j,k}\hsp(s_j\msp x^{1/2}))\leq\frac{1}{2}\sum_{j,k\,=\,1}^n(\norm{z_{j,k}}^2\hsp s_j\msp x+s_k\msp x)}
As ${y^*y}$ is positive and invertible, there exists ${\eps>0}$ such that ${y^*y\geq\eps\msp1_\A}$; the above inequality then implies that ${\smash{\sum_{j,k\,=\,1}^n(\norm{z_{j,k}}^2\hsp s_j\msp x+s_k\msp x)}}$ is invertible as well, whence $x$ is $G$-full. 
\end{proof}

This yields our desired local characterization of strong minimality. 

\begin{prop}\label{strong-minimality-local-char}
Let ${(\A,G)}$ be a C*-flow. The following are equivalent: 
\begin{enumerate}
\item ${(\A,G)}$ is strongly minimal. 
\item Every nonzero positive element ${x\in\A_+}$ is $G$-full.  
\item Every nonzero open projection ${e\in\A^{**}}$ admits ${s_1,\dots,s_n\in G}$ such that ${\smash{\bigvee_{j\,=\,1}^n s_j\msp e=1_{\A^{**}}}}$. 
\end{enumerate}
\end{prop}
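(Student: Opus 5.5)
The plan is to prove ${(1)\Leftrightarrow(2)}$ using Proposition \ref{strong-minimality-local-char-lemma}, and ${(1)\Leftrightarrow(3)}$ using the correspondence between closed left ideals and open projections from Section \ref{nc-topology-subsection}.

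\emph{${(1)\Leftrightarrow(2)}$.} Suppose ${(\A,G)}$ is strongly minimal and ${x\in\A_+}$ is nonzero. The closed $G$-invariant left ideal generated by $x$ is nonzero, so it equals $\A$. By Proposition \ref{strong-minimality-local-char-lemma}, $x$ is $G$-full. Conversely, assume (2) and let ${\I\subseteq\A}$ be a nonzero closed $G$-invariant left ideal. Pick ${0\neq y\in\I}$; then ${x=y^*y\in\I}$ is nonzero and positive, hence $G$-full. The closed $G$-invariant left ideal generated by $x$ is contained in $\I$ and, by Proposition \ref{strong-minimality-local-char-lemma}, equals $\A$. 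Hence ${\I=\A}$.

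\emph{${(1)\Leftrightarrow(3)}$.} We use the order-preserving bijection ${\I\leftrightarrow e}$ between closed left ideals and open projections. It is $G$-equivariant, in the sense that ${s\I\leftrightarrow s\msp e}$, where $G$ acts on ${\A^{**}}$ by the bidual automorphisms.
\begin{enumerate}
\item For open projections ${e_1,\dots,e_n}$, the supremum ${\bigvee_j e_j}$ is open and corresponds to the closed left ideal $\overline{\sum_j\I_j}$. This holds because the bijection is an order isomorphism and $\overline{\sum_j\I_j}$ is the least closed left ideal containing each $\I_j$.
\item For (1)$\Rightarrow$(3): let ${e\neq0}$ be open with ideal $\I$. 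Choose a nonzero positive ${x\in\I}$. By (2), which we already have from (1), there are ${s_1,\dots,s_n}$ with ${\sum_j s_jx}$ invertible.
\item Then $\overline{\sum_j s_j\I}$ contains ${\sum_j s_jx}$ and so equals $\A$. Its open projection ${\bigvee_j s_j\msp e}$ is therefore ${1_{\A^{**}}}$.
\item For (3)$\Rightarrow$(1): let $\I$ be a nonzero $G$-invariant closed left ideal with projection $e$. By $G$-invariance, ${s\msp e=e}$ for every ${s\in G}$.
\item Hence ${\bigvee_j s_j\msp e=e}$. Condition (3) forces ${e=1_{\A^{**}}}$, so ${\I=\A}$.
\end{enumerate}

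The only step needing care is item 1, namely that finite suprema of open projections correspond to closed sums of left ideals. I would verify it from the weak-* density description: the weak-* closure of $\overline{\sum_j\I_j}$ is the smallest weak-* closed left ideal of ${\A^{**}}$ containing each ${\A^{**}e_j}$. That ideal is ${\A^{**}(\bigvee_j e_j)}$, which identifies the corresponding projection as ${\bigvee_j e_j}$. Alternatively, for the direction (1)$\Rightarrow$(3) one only needs the weaker fact that $\overline{\sum_j s_j\I}\subseteq\A^{**}(\bigvee_j s_je)$. Together with ${1\in\A^{**}(\bigvee_j s_je)}$, this forces the supremum to be $1$ directly.
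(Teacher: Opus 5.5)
Your proof is correct and is essentially the paper's argument: (1)$\Rightarrow$(2) via Proposition \ref{strong-minimality-local-char-lemma}, and (3)$\Rightarrow$(1) via the $G$-equivariant correspondence between closed left ideals and open projections, are exactly what the paper does. The only real variation is the step into (3). You place the invertible element $\sum_j s_j x$ in $\overline{\sum_j s_j\I}\subseteq\A^{**}(\bigvee_j s_j\msp e)$, whereas the paper takes $0\neq x\leq e$, obtains $\sum_j s_j\msp e\geq 1_{\A^{**}}$, and compresses by $1_{\A^{**}}-\bigvee_j s_j\msp e$; this lets it avoid identifying finite suprema of open projections with closed sums of left ideals, and your separate proof of (2)$\Rightarrow$(1) is correct but not needed in its cycle (1)$\Rightarrow$(2)$\Rightarrow$(3)$\Rightarrow$(1).
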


\begin{proof}
The implication ${(1)\Rightarrow(2)}$ follows from Lemma \ref{strong-minimality-local-char-lemma}, while the implication ${(3)\Rightarrow(1)}$ follows from Proposition \ref{strong-minimality-nc-topology-char}. Suppose that ${(2)}$ holds and let ${e\in\A^{**}}$ be a nonzero open projection; then there exists a nonzero element ${x\in\A_+}$ such that ${x\leq e}$. As $x$ is $G$-full, there exist ${s_1,\dots,s_n\in G}$ such that ${\smash{\sum_{j\,=\,1}^n s_j\msp x\geq1_\A}}$. This implies ${\smash{\sum_{j\,=\,1}^n s_j\msp e\geq1_{\A^{**}}}}$, whence 
\eqnsp{0\leq1_{\A^{**}}-\bigvee_{j\,=\,1}^n s_j\msp e\leq\Big(1_{\A^{**}}-\bigvee_{j\,=\,1}^n s_j\msp e\Big)\Big(\sum_{j\,=\,1}^n s_j\msp e\Big)\Big(1_{\A^{**}}-\bigvee_{j\,=\,1}^n s_j\msp e\Big)=0}
It therefore follows that ${\bigvee_{j\,=\,1}^n s_j\msp e=1_{\A^{**}}}$, which proves the implication ${(2)\Rightarrow(3)}$. 
\end{proof}

\begin{remark}\label{open-proj-covers-one}
The implication ${(1)\Rightarrow(3)}$ in the above proposition can also be proved directly by use of the noncommutative topology on ${\A^{**}}$: if ${e\in\A^{**}}$ is a nonzero open projection, then ${\bigvee_{s\,\in\,G}se}$ is a nonzero $G$-invariant open projection, hence equal to ${1_{\A^{**}}}$ by Proposition \ref{strong-minimality-nc-topology-char}, and it follows by \cite[Proposition II.10]{akemann69} that ${\bigvee_{j\,=\,1}^n s_j\msp e=1_{\A^{**}}}$ for some ${s_1,\dots,s_n\in G}$. 
\end{remark}

\begin{remark}
If $G$ acts on a Boolean algebra $B$ by automorphisms, then the action is minimal if for every nonzero element ${x\in B}$ there exist ${s_1,\dots,s_n\in G}$ such that ${\smash{\bigvee_{j\,=\,1}^n s_j\msp x=1_B}}$ (see \cite{bb90}). The open projections in the bidual of a C*-algebra do not generally form a Boolean algebra, since the meet of two open projections may fail to be open (\cite[Example II.6]{akemann69}), but Condition ${(3)}$ in the preceding proposition is nevertheless clearly a minimality condition of this type on the set of open projections in the bidual. 
\end{remark}

We now obtain Theorem \ref{strong-minimality-char} by splicing together the characterizations of strong minimality that were proved in this section. 

\begin{proof}[Proof of Theorem \ref{strong-minimality-char}]
The claim follows from the combination of Proposition \ref{strong-minimality-extrinsic-char}, Proposition \ref{strong-minimality-hereditary-minimality-char}, Proposition \ref{strong-minimality-poisson-map-char}, and Proposition \ref{strong-minimality-local-char}. 
\end{proof}

We conclude this section by showing that non-full elements can be separated from the identity by boundary maps. 

\begin{prop}
Let ${(\A,G)}$ be a C*-flow. An element ${x\in\A_+}$ is $G$-full if and only if ${\Phi(x)\neq0}$ for every $G$-equivariant unital positive linear map ${\Phi:\A\to C(\partial_FG)}$. Moreover, in this case there exists ${\eps>0}$ such that ${\norm{\Phi(x)}_\infty\geq\eps}$ for every such map ${\Phi:\A\to C(\partial_FG)}$. 
\end{prop}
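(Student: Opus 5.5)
The forward direction and the uniform bound are handled together. Suppose $x$ is $G$-full, so that there are $s_1,\dots,s_n\in G$ with $\sum_{j=1}^n s_j\msp x\geq1_\A$. Let $\Phi:\A\to C(\partial_FG)$ be any $G$-equivariant unital positive linear map. Positivity and equivariance give
\[
\sum_{j=1}^n s_j\msp\Phi(x)=\Phi\Big(\sum_{j=1}^n s_j\msp x\Big)\geq1.
\]
The action of $G$ on $C(\partial_FG)$ is isometric, so taking sup norms yields $n\norm{\Phi(x)}_\infty\geq1$. Hence $\eps=1/n$ works, and this $\eps$ depends only on $x$ and not on $\Phi$. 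In particular $\Phi(x)\neq0$.

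For the converse, suppose $x$ is not $G$-full. By Proposition \ref{strong-minimality-local-char-lemma}, the closed $G$-invariant left ideal $\I$ generated by $x$ is proper. Lemma \ref{vanishing-equivariant-ucp} then supplies a $G$-equivariant unital completely positive map $\Phi:\A\to C(\partial_FG)$ with $\I\subseteq\N_\Phi$. Since $x\in\I$, we get $\Phi(x^*x)=0$, that is, $\Phi(x^2)=0$.

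We need $\Phi(x)=0$, not merely $\Phi(x^2)=0$. The cleanest fix is to note that $x^{1/2}$ lies in $\I$ as well. Indeed, the left ideal generated by $x$ is $\overline{\A x}$, and the closed left ideal $\overline{\A x}$ contains $x^{1/2}$ because $x^{1/2}=\lim_k f_k(x)\,x$ for continuous functions $f_k$ (for instance $f_k(t)=t^{1/2}/(t+1/k)$ approximately), with convergence by functional calculus. Then $\Phi(x)=\Phi((x^{1/2})^*x^{1/2})=0$. Alternatively, one can apply Kadison--Schwarz: $\Phi(x)^2\leq\Phi(x^2)=0$.

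Either route gives a $G$-equivariant unital positive map $\Phi$ with $\Phi(x)=0$, which completes the equivalence. The only delicate point in the argument is this passage from $\Phi(x^2)=0$ to $\Phi(x)=0$, and it is resolved as just described.
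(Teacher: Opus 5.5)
Your proposal is correct and follows essentially the same route as the paper. The forward direction with $\eps=1/n$ comes from applying $\Phi$ to $\sum_j s_j\msp x\geq1_\A$, and the converse combines Proposition \ref{strong-minimality-local-char-lemma} with Lemma \ref{vanishing-equivariant-ucp}; the only addition is that you explicitly justify passing from $\I\subseteq\N_\Phi$ to $\Phi(x)=0$. The paper leaves that step implicit, and it also follows directly from the Cauchy--Schwarz inequality, which gives $\N_\Phi\subseteq\ker\Phi$.
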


\begin{proof}
Suppose first that $x$ is $G$-full; then there exist some collection ${s_1,\dots,s_n\in G}$ (possibly with repetition) such that ${\smash{\sum_{j\,=\,1}^n s_j\msp x\geq1_\A}}$. If ${\Phi:\A\to C(\partial_FG)}$ is a $G$-equivariant unital positive linear map, then 
\eqnsp{\sum_{j\,=\,1}^n s_j\hsp\Phi(x)=\Phi\Big(\sum_{j\,=\,1}^n s_j\msp x\Big)\geq\chi_{\partial_FG}}
and so ${\norm{\Phi(x)}_\infty\geq\frac{1}{n}}$. Conversely, suppose that $x$ is not $G$-full; then it follows by Lemma \ref{strong-minimality-local-char-lemma} that the closed $G$-invariant left ideal generated by $x$ is not equal to $\A$, whence Lemma \ref{vanishing-equivariant-ucp} implies that there exists a $G$-equivariant unital positive linear map ${\Phi:\A\to C(\partial_FG)}$ such that ${\Phi(x)=0}$. 
\end{proof}

\begin{remark}
When $G$ is the trivial group, $G$-fullness coincides with invertibility, and the above proposition reduces to the usual separation of non-invertible elements from the identity by states. 
\end{remark}

\section{C*-irreducibility}\label{c*-irreducibility-section}

In this section, we recall R\o rdam's notion of C*-irreducibility from \cite{rordam23} and show it admits a natural characterization in terms of strong minimality. 
\medskip

A unital inclusion of C*-algebras ${\B\subseteq\A}$ is \emph{C*-irreducible} if every C*-algebra $\D$ intermediate to the inclusion (in the sense that ${\B\subseteq\D\subseteq\A}$) is simple. The notion is inspired by the W*-algebraic notion of an irreducible inclusion: an inclusion ${\N\subseteq\M}$ of von Neumann algebras is \emph{irreducible} if every von Neumann algebra $\Q$ intermediate to the inclusion is a factor. It is not difficult to show that this is equivalent to the relative commutant being trivial. If the group ${U(\N)}$ acts on $\M$ by conjugation, then 
\eqnsp{\M^{U(\N)}=U(\N)'\cap\M=\N'\cap\M}
which clearly yields the following dynamical characterization: the inclusion ${\N\subseteq\M}$ is irreducible if and only if the W*-flow ${(\M,U(\N))}$ is ergodic. In light of this, it is not unreasonable to expect that C*-irreducibility admits an analogous dynamical characterization. We now demonstrate that this is indeed the case. 

\begin{proof}[Proof of Theorem \ref{c*-irreducibility-char}]
The implication ${(2)\Rightarrow(3)}$ is clear. The implications ${(1)\Rightarrow(2)}$ and ${(3)\Rightarrow(1)}$ follow by Proposition \ref{strong-minimality-local-char} along with \cite[Lemma 3.5]{rordam23} and \cite[Proposition 3.7]{rordam23}. 
\end{proof}

Setting ${\B=\A}$ and ${G=U(\A)}$ recovers the result of \cite[Proposition 2.15]{lp84}. 

\begin{cor}
A unital C*-algebra $\A$ is simple if and only if ${(\A,U(\A))}$ is strongly minimal. 
\end{cor}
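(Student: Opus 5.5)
The plan is to obtain the corollary as the special case $\B=\A$ of Theorem \ref{c*-irreducibility-char}, and then to add a direct argument through $U(\A)$-fullness so that the result does not rest on R\o rdam's lemmas.

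For the specialization, take $\B=\A$ and $G=U(\A)$. Every element of a unital C*-algebra is a linear combination of four unitaries, so the span of $U(\A)$ is all of $\A$, and $G$ is an admissible subgroup in Condition $(2)$ or $(3)$ of Theorem \ref{c*-irreducibility-char}. The inclusion $\A\subseteq\A$ is C*-irreducible exactly when $\A$ is simple, because the only intermediate algebra is $\A$ itself. The equivalence $(1)\Leftrightarrow(3)$ of Theorem \ref{c*-irreducibility-char} then says that $\A$ is simple if and only if $(\A,U(\A))$ is strongly minimal, with the action by unitary conjugation.

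For the direct argument, I would use Proposition \ref{strong-minimality-local-char}, so it suffices to show that $\A$ is simple if and only if every nonzero $x\in\A_+$ is $U(\A)$-full.

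Suppose first that every nonzero positive element is $U(\A)$-full, and let $\J$ be a nonzero closed two-sided ideal. Pick a nonzero $x\in\J_+$. By fullness, $\sum_j u_j x u_j^*\geq1_\A$ for some unitaries $u_1,\dots,u_n$. The left-hand side lies in $\J$ and is invertible, so $\J=\A$, and $\A$ is simple.

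Conversely, suppose $\A$ is simple and let $x\in\A_+$ be nonzero. I need unitaries $u_j$ with $\sum_j u_j x u_j^*$ invertible; this is the main obstacle. By simplicity, the ideal generated by $x$ is $\A$, so there are $a_1,\dots,a_m$ with $\sum_i a_i x a_i^*\geq 1_\A$, by the standard argument for algebraic ideals in unital simple algebras. It remains to replace the $a_i$ by unitaries. Write each $a_i$ as a combination of four unitaries, $a_i=\sum_k\lambda_{ik}v_{ik}$. For any finitely many elements $b_k$, the operator convexity estimate $(\sum_k b_k)x(\sum_k b_k)^*\leq N\sum_k b_k x b_k^*$ holds, where $N$ is the number of terms. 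Applying it with $b_k=\lambda_{ik}v_{ik}$ gives
\[
1_\A\leq\sum_i a_i x a_i^*\leq 4\sum_{i,k}\abs{\lambda_{ik}}^2\, v_{ik} x v_{ik}^*.
\]
The right-hand side is dominated by a positive multiple of $\sum_{i,k} v_{ik}xv_{ik}^*$, which is therefore invertible. Hence $x$ is $U(\A)$-full, and $(\A,U(\A))$ is strongly minimal.
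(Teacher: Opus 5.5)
Your first paragraph is exactly the paper's argument. The corollary is obtained by setting $\B=\A$ and $G=U(\A)$ in Theorem \ref{c*-irreducibility-char}, using that unitaries span $\A$ and that $\A\subseteq\A$ is C*-irreducible precisely when $\A$ is simple.

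Your direct argument through Proposition \ref{strong-minimality-local-char} is a genuinely different route, and it is also correct. The steps all check out:
\begin{enumerate}
\item the standard step producing $a_1,\dots,a_m$ with $\sum_i a_i x a_i^*\geq 1_\A$;
\item the estimate $(\sum_{k=1}^N b_k)\,x\,(\sum_{k=1}^N b_k)^*\leq N\sum_{k=1}^N b_k x b_k^*$;
\item the rescaling, which uses that $\max_{i,k}\lvert\lambda_{ik}\rvert>0$;
\item the passage from an invertible sum to $U(\A)$-fullness by repeating unitaries.
\end{enumerate}
In effect, this argument reproves, in the special case $\B=\A$, the content of R\o rdam's Lemma 3.5 and Proposition 3.7. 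The paper imports those results in its proof of Theorem \ref{c*-irreducibility-char}. Your version gains self-containedness at the cost of a short computation.

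An even shorter route avoids fullness altogether. Let $\I$ be a closed left ideal invariant under conjugation by unitaries. For $y\in\I$ and $u\in U(\A)$ we have $yu^*=u^*(uyu^*)\in\I$. Since unitaries span $\A$, $\I$ is also a right ideal. Conversely, every closed two-sided ideal is $U(\A)$-invariant. So the $U(\A)$-invariant closed left ideals are exactly the closed two-sided ideals, and the corollary follows directly from the definition of strong minimality. The paper uses this same observation in Section \ref{example-section}, when discussing C*-algebras that admit no strongly minimal actions.
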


We also obtain the following result concerning inclusions of group C*-algebras. 

\begin{cor}\label{group-c*-irreducibility-char}
Let ${H\leq G}$ be an inclusion of discrete groups. If ${\pi:G\to U(\H)}$ is a representation and ${\sigma:H\to U(\H)}$ is the restriction of $\pi$ to $H$, then ${C_\sigma^*(H)\subseteq C_\pi^*(G)}$ is C*-irreducible if and only if ${(C_\pi^*(G),H)}$ is strongly minimal, where the action is by conjugation via the unitaries in ${\pi(H)}$. 
\end{cor}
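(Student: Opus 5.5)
The plan is to deduce this directly from Theorem \ref{c*-irreducibility-char}, taking ${\B=C_\sigma^*(H)}$, ${\A=C_\pi^*(G)}$, and the subgroup ${\pi(H)\leq U(C_\sigma^*(H))}$.

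First I will check the hypotheses of that theorem. The C*-algebra ${C_\sigma^*(H)}$ is by definition the norm closure of ${\Span(\sigma(H))=\Span(\pi(H))}$ inside ${B(\H)}$. It is a unital C*-subalgebra of ${C_\pi^*(G)}$, since ${\sigma=\pi\rvert_H}$ and ${\sigma(e)=1}$, so the inclusion ${C_\sigma^*(H)\subseteq C_\pi^*(G)}$ is unital. Moreover, ${\pi(H)}$ is a subgroup of ${U(C_\sigma^*(H))}$ whose linear span is norm dense in ${C_\sigma^*(H)}$.

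Next I will compare the two actions. Conjugation by ${\pi(H)}$ on ${C_\pi^*(G)}$ gives a C*-dynamical system of the group ${\pi(H)}$. The system ${(C_\pi^*(G),H)}$ in the statement is this action composed with the surjective homomorphism ${H\to\pi(H)}$. Strong minimality refers only to which closed left ideals are invariant, and the family of invariant left ideals is the same under $H$ and under ${\pi(H)}$, because the two groups act through the same set of automorphisms. So ${(C_\pi^*(G),H)}$ is strongly minimal if and only if ${(C_\pi^*(G),\pi(H))}$ is.

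The conclusion then follows from Theorem \ref{c*-irreducibility-char}. If the inclusion is C*-irreducible, the implication ${(1)\Rightarrow(2)}$ gives strong minimality for ${\pi(H)}$. Conversely, strong minimality for this one subgroup is condition ${(3)}$, which gives C*-irreducibility.

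There is no real obstacle here. The only point needing care is the passage from $H$ to ${\pi(H)}$ when $\pi$ is not faithful, and the observation in the previous paragraph handles this.
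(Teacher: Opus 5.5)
Your proposal is correct and is the intended argument. The paper states this corollary without a separate proof, as an immediate consequence of Theorem \ref{c*-irreducibility-char} applied to $\B=C_\sigma^*(H)$, $\A=C_\pi^*(G)$ and the subgroup $\pi(H)\leq U(C_\sigma^*(H))$, whose linear span is dense by definition; your observation that strong minimality is unchanged when passing from $H$ to $\pi(H)$ (both act through the same automorphisms) correctly covers a non-faithful $\pi$, and matches the remark on quotient groups in Section \ref{product-section}.
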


\begin{remark}
The case where ${\pi=\lambda}$ (the left regular representation of $G$) is of particular interest. As ${\lambda\rvert_H}$ is weakly equivalent to the left regular representation of $H$, by combining Theorem \ref{strong-minimality-char} and Corollary \ref{group-c*-irreducibility-char} we obtain the following equivalence: the inclusion ${C_\lambda^*(H)\subseteq C_\lambda^*(G)}$ is C*-irreducible if and only if every $H$-equivariant unital (completely) positive linear map ${\Phi:C_\lambda^*(G)\to C(\partial_FH)}$ is faithful. As far as we are aware, this is the first characterization of C*-irreducibility for inclusions of reduced group C*-algebras in terms of a condition involving the Furstenberg boundary, vaguely reminiscent of the results of \cite{kk17} characterizing C*-simplicity. 
\end{remark}

We record explicitly the following important special case of Corollary \ref{group-c*-irreducibility-char}. 

\begin{cor}\label{group-c*-simple-char}
Let $G$ be a discrete group. If ${\pi:G\to U(\H)}$ is a representation, then ${C_\pi^*(G)}$ is simple if and only if ${(C_\pi^*(G),G)}$ is strongly minimal. 
\end{cor}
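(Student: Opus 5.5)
This follows from Corollary \ref{group-c*-irreducibility-char} by taking ${H=G}$, so that ${\sigma=\pi}$. Then ${C_\sigma^*(H)\subseteq C_\pi^*(G)}$ is just the trivial inclusion ${C_\pi^*(G)\subseteq C_\pi^*(G)}$. The corollary says this inclusion is C*-irreducible if and only if ${(C_\pi^*(G),G)}$ is strongly minimal, where $G$ acts by conjugation by the unitaries ${\pi(s)}$, ${s\in G}$. So the proof reduces to showing that the trivial inclusion ${\A\subseteq\A}$ is C*-irreducible exactly when $\A$ is simple.

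This last point is immediate from R\o rdam's definition. The only C*-algebra $\D$ with ${\A\subseteq\D\subseteq\A}$ is ${\D=\A}$. Hence every intermediate algebra is simple if and only if $\A$ is simple.

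As a cross-check, the same conclusion also follows from Theorem \ref{c*-irreducibility-char} applied to ${\B=\A=C_\pi^*(G)}$. Take the subgroup ${\pi(G)\leq U(C_\pi^*(G))}$; its linear span is norm dense in ${C_\pi^*(G)}$ by the definition of ${C_\pi^*(G)}$. By the equivalence ${(1)\Leftrightarrow(3)}$ there, the trivial inclusion is C*-irreducible if and only if ${(C_\pi^*(G),\pi(G))}$ is strongly minimal.

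Replacing $\pi(G)$ by $G$ changes nothing. The action of $G$ factors through ${\pi(G)}$, so the two actions have exactly the same invariant closed left ideals. In particular, non-injectivity of $\pi$ causes no problem. No step here presents a real obstacle.
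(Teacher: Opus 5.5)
Your proposal is correct and follows the paper: it records this statement as the special case $H=G$ of Corollary \ref{group-c*-irreducibility-char}, using that the trivial inclusion $\A\subseteq\A$ is C*-irreducible exactly when $\A$ is simple. Your cross-check via Theorem \ref{c*-irreducibility-char} with the subgroup $\pi(G)$ is also sound, since the $G$-action factors through $\pi(G)$ and therefore has the same invariant closed left ideals.
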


\begin{remark}
For ${\pi\neq\lambda}$ (the left regular representation), this is again seemingly the first instance of a condition characterizing simplicity of a group C*-algebra arising from a representation other than the left regular representation in terms of the Furstenberg boundary. 
\end{remark}

We conclude this section by examining the connection between C*-irreducibility and two other irreducibility-type properties for unital inclusions of simple C*-algebras. A unital inclusion ${\B\subseteq\A}$ of C*-algebras is said to be \emph{relatively simple} (following \cite{ursu22}) if every unital completely positive linear map ${\varphi:\A\to B(\H)}$ such that ${\varphi\rvert_\B}$ is a *-homomorphism is faithful, and \emph{hereditarily essential} (following \cite{pz15}) if the inclusion ${\B\subseteq\D}$ is essential for every unital C*-algebra $\D$ intermediate to the inclusion ${\B\subseteq\A}$. 

\begin{prop}
Let ${\B\subseteq\A}$ be a unital inclusion of C*-algebras. The following are equivalent: 
\begin{enumerate}
\item ${\B\subseteq\A}$ is C*-irreducible. 
\item ${\B\subseteq\A}$ is relatively simple. 
\item ${\B\subseteq\A}$ is hereditarily essential and $\B$ is simple. 
\end{enumerate}
\end{prop}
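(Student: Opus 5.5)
The plan is to route everything through condition (1), proving ${(1)\Leftrightarrow(2)}$ and ${(1)\Leftrightarrow(3)}$ separately. For ``essential'' I will use the standard meaning for a unital inclusion ${\B\subseteq\D}$: every nonzero closed two-sided ideal of $\D$ meets $\B$ nontrivially. Equivalently, every $*$-homomorphism of $\D$ that is injective on $\B$ is injective. If the intended definition in \cite{pz15} is instead phrased through pseudo-expectations, one would first reduce to this ideal-theoretic form.

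\emph{${(1)\Rightarrow(2)}$.} Let ${\varphi:\A\to B(\H)}$ be unital completely positive with ${\varphi\rvert_\B}$ a $*$-homomorphism. Then $\B$ lies in the multiplicative domain of $\varphi$, so ${\varphi(uyu^*)=\varphi(u)\varphi(y)\varphi(u)^*}$ for ${u\in U(\B)}$ and ${y\in\A}$. Take ${x\in\N_\varphi}$. Then
\[\varphi((uxu^*)^*(uxu^*))=\varphi(u)\varphi(x^*x)\varphi(u)^*=0,\]
so the closed left ideal ${\N_\varphi}$ is invariant under conjugation by ${U(\B)}$. Since $\B$ is unital, the span of ${U(\B)}$ is all of $\B$. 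Theorem~\ref{c*-irreducibility-char} then says ${(\A,U(\B))}$ is strongly minimal. Hence ${\N_\varphi}$ is $\{0\}$ or $\A$, and it is not $\A$ because $\varphi$ is unital. So $\varphi$ is faithful.

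\emph{${(2)\Rightarrow(1)}$.} Let ${\B\subseteq\D\subseteq\A}$ and let ${\J\subsetneq\D}$ be a proper closed ideal. Choose a faithful representation ${\pi}$ of ${\D/\J}$ on some $\H$; this is possible since ${\D/\J\neq0}$. By Arveson's extension theorem, extend ${\pi\circ q:\D\to B(\H)}$, with $q$ the quotient map, to a unital completely positive map ${\varphi:\A\to B(\H)}$. Its restriction to $\B$ is the $*$-homomorphism ${\pi\circ q\rvert_\B}$, so relative simplicity makes $\varphi$ faithful. In particular $\varphi$ is faithful on $\D$, where it equals ${\pi\circ q}$. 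Therefore ${\J=\ker(\pi\circ q)=0}$, so $\D$ is simple and the inclusion is C*-irreducible.

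\emph{${(1)\Leftrightarrow(3)}$.} If the inclusion is C*-irreducible, then $\B$ is simple, taking ${\D=\B}$. Every intermediate $\D$ is also simple, so it has no nonzero proper ideals and ${\B\subseteq\D}$ is trivially essential. For the converse, let $\D$ be intermediate and ${\J\subsetneq\D}$ a proper closed ideal. Then ${\J\cap\B}$ is a closed ideal of $\B$ not containing $1$, so it is zero by simplicity of $\B$. Essentiality of ${\B\subseteq\D}$ then forces ${\J=0}$, so $\D$ is simple.

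I expect no step to be a serious obstacle. The only delicate points are the multiplicative-domain argument giving ${U(\B)}$-invariance of ${\N_\varphi}$, and matching the definition of essential inclusion from \cite{pz15} with the ideal-intersection form used above.
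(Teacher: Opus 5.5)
Your proof is correct. For ${(1)\Rightarrow(2)}$ it takes a different route from the paper; the other two parts match the paper in substance, with details filled in.

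\emph{The paper's ${(1)\Rightarrow(2)}$.} The paper works entirely inside the multiplicative domain $\D$ of $\varphi$. Since ${\B\subseteq\D\subseteq\A}$, C*-irreducibility makes $\D$ simple, so the unital $*$-homomorphism ${\varphi\rvert_\D}$ is injective. Lemma~\ref{completely-positive-map-faithful-domain}, a Kadison--Schwarz argument, then propagates faithfulness from $\D$ to all of $\A$.

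\emph{Your ${(1)\Rightarrow(2)}$.} You use the multiplicative domain only to see that ${\N_\varphi}$ is invariant under conjugation by ${U(\B)}$. Equivalently, $\varphi$ is equivariant into the C*-flow ${(B(\H),U(\B))}$, where $u$ acts by ${\mathrm{Ad}\,\varphi(u)}$. You then invoke Theorem~\ref{c*-irreducibility-char}, and there is no circularity, since that theorem does not depend on this proposition.
\begin{itemize}
\item Your route is heavier: Theorem~\ref{c*-irreducibility-char} rests on \cite[Lemma 3.5]{rordam23} and \cite[Proposition 3.7]{rordam23}, whereas the paper's argument is self-contained.
\item In exchange, it shows relative simplicity to be a direct instance of Proposition~\ref{strong-minimality-extrinsic-char}: every equivariant unital completely positive map out of a strongly minimal flow is faithful. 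This fits the paper's dynamical viewpoint.
\end{itemize}

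\emph{The remaining implications.} For ${(2)\Rightarrow(1)}$ the paper simply cites \cite[Proposition 3.6]{ursu22}; your Arveson-extension argument supplies a complete, self-contained proof. For ${(1)\Leftrightarrow(3)}$, which the paper calls clear, your ideal-intersection argument is the intended one. Reading ``essential'' as the ideal intersection property is exactly what makes this equivalence clear.
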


\begin{proof}
The equivalence ${(1)\Leftrightarrow(3)}$ is clear, while the implication ${(2)\Rightarrow(1)}$ is contained in \cite[Proposition 3.6]{ursu22}. Suppose now that ${(1)}$ holds and let ${\varphi:\A\to B(\H)}$ be a unital completely positive linear map such that ${\varphi\rvert_\B}$ is a *-homomorphism. Let ${\D\subseteq\A}$ denote the multiplicative domain of $\varphi$. As ${\varphi\rvert_\B}$ is a *-homomorphism, evidently ${\B\subseteq\D}$; it thus follows that $\D$ is a C*-algebra intermediate to the inclusion ${\B\subseteq\A}$ and consequently simple. As ${\varphi\rvert_\D}$ is a unital *-homomorphism, this implies that ${\varphi\rvert_\D}$ is faithful; it then follows by Lemma \ref{completely-positive-map-faithful-domain} that $\varphi$ is faithful, which proves the remaining implication ${(1)\Rightarrow(2)}$. 
\end{proof}

\section{Crossed products and tensor products}\label{product-section}

In this section, we study the strong minimality of tensor products and reduced crossed products. We characterize when reduced crossed products are strongly minimal and demonstrate that strong minimality is stable with respect to taking (minimal) tensor products when one of the component C*-flows has an underlying type I C*-algebra. Additionally, we characterise strong minimality of diagonal C*-flows when one of the component C*-flows arises from a reduced group C*-algebra. 
\medskip

The next proposition is a reframing of \cite[Theorem 5.12]{rordam23} in the setup of strong minimality. We offer a different proof which shows the utility of the connection between strong minimality and the Furstenberg boundary. 
\medskip

We will require the following preliminary lemma; the proof is a straightforward modification of that of \cite[Proposition 3.3]{kennedy20} (and indeed this result can clearly be recovered by taking ${\A=\C}$ in the statement of the lemma). 

\begin{lemma}\label{c*-simple-crossed-product-boundary-map-factors}
Let ${(\A,G)}$ be a C*-flow. If $G$ is C*-simple, then every $G$-equivariant unital positive linear map ${\Phi:\A\rtimes_\lambda G\to C(\partial_FG)}$ factors through the canonical expectation ${E:\A\rtimes_\lambda G\to\A}$. 
\end{lemma}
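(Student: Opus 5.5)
To prove that $\Phi=\Phi\circ E$, we mimic \cite[Proposition 3.3]{kennedy20}. The claim amounts to $\Phi(a\lambda_t)=0$ for every $a\in\A$ and $t\in G\setminus\{e\}$. Indeed, once this holds, linearity and continuity give $\Phi=\Phi\circ E$ on the dense $*$-subalgebra spanned by the elements $a\lambda_s$, hence on all of $\A\rtimes_\lambda G$. Note also that $\Phi$ is a unital positive map into a commutative C*-algebra, so it is automatically completely positive. By composing $\Phi$ with evaluation at points we may work with states on $\A\rtimes_\lambda G$.

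The first step is to locate $C^*_\lambda(G)$ inside the multiplicative domain of $\Phi$. Let $\phi=\Phi\rvert_{C_\lambda^*(G)}$. This is a $G$-equivariant unital completely positive map $C_\lambda^*(G)\to C(\partial_FG)$, where $G$ acts on $C^*_\lambda(G)$ by conjugation, which is the restriction of the action on the crossed product. By the Kalantar--Kennedy results underlying \cite[Proposition 3.3]{kennedy20}, C*-simplicity forces $\phi$ to be the canonical trace: $\phi(\lambda_t)=\tau(\lambda_t)1=0$ for $t\neq e$. The cleaner route is to look at the restriction of $\Phi$ to $C(\partial_FG)$-valued data: $\partial_FG$ being a topologically free boundary, for each $x\in\partial_FG$ and each $t\neq e$ one obtains $\Phi(\lambda_t)(x)=0$. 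Here I would follow Kennedy's argument directly. For fixed $x$ with $tx\neq x$, choose $f\in C(\partial_FG)$ with $f(x)=1$ and $f(tx)=0$. The issue is that $\Phi$ is only defined on $\A\rtimes_\lambda G$, not on a crossed product containing $C(\partial_FG)$. So the second step is to extend $\Phi$, using $G$-injectivity of $C(\partial_FG)$, to a $G$-equivariant unital completely positive map
\[
\tilde\Phi:(C(\partial_FG)\otimes_\mathrm{min}\A)\rtimes_\lambda G\to C(\partial_FG).
\]
The restriction of $\tilde\Phi$ to $C(\partial_FG)$ is a $G$-equivariant unital completely positive self-map of $C(\partial_FG)$, hence the identity by rigidity. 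Therefore $C(\partial_FG)$ lies in the multiplicative domain of $\tilde\Phi$. We then compute
\[
f\,\Phi(a\lambda_t)=\tilde\Phi(f\,a\lambda_t)=\tilde\Phi(a\lambda_t\,(t^{-1}f)),
\]
which equals $\Phi(a\lambda_t)\,(t^{-1}f)$. Evaluating at $x$ gives $\Phi(a\lambda_t)(x)=\Phi(a\lambda_t)(x)\,f(tx)\cdot(\text{appropriate sign of convention})=0$.

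The third step is to handle the points $x$ with $tx=x$. C*-simplicity means that the action on $\partial_FG$ is (topologically) free, so the set $\{x:tx\neq x\}$ is dense for each $t\neq e$. Since $\Phi(a\lambda_t)$ is continuous and vanishes on this dense set, it vanishes everywhere.

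I expect the main obstacle to be the extension step. We must ensure that $C(\partial_FG)\otimes\A$ and $\A\rtimes_\lambda G$ sit compatibly inside $(C(\partial_FG)\otimes\A)\rtimes_\lambda G$, and we must check the commutation relation $f\lambda_t=\lambda_t(t^{-1}f)$ with the paper's conventions. Both points are routine but need care.
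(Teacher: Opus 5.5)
Your steps two and three are correct and are essentially the paper's proof. Like the paper, you extend $\Phi$ by $G$-injectivity to $(C(\partial_FG)\otimes_{\mathrm{min}}\A)\rtimes_\lambda G$, use $G$-rigidity to put $C(\partial_FG)$ in the multiplicative domain, and separate $x$ from $tx$ by an Urysohn function. The only difference is that the paper cites \cite[Theorem 3.1]{bkko17} to get that the action on $\partial_FG$ is genuinely free, so no density step is needed, while your use of topological freeness plus continuity works equally well. You should delete the abandoned ``first step'', since $C_\lambda^*(G)$ is not in the multiplicative domain of $\Phi$: for $t\neq e$ that would force $\Phi(\lambda_t)$ to be unitary, whereas the lemma gives $\Phi(\lambda_t)=0$.
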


\begin{proof}
Let ${\B=(C(\partial_FG)\otimes_\mathrm{min}\A)\rtimes_\lambda G}$. Identifying ${\A\rtimes_\lambda G}$ with its image in $\B$ under the natural inclusion, the $G$-injectivity of ${C(\partial_FG)}$ implies that $\Phi$ extends to a $G$-equivariant unital completely positive linear map ${\Psi:\B\to C(\partial_FG)}$. Identifying ${C(\partial_FG)}$ with its image in $\B$ under the canonical inclusion, the $G$-rigidity of the inclusion ${\C\subseteq C(\partial_FG)}$ implies that ${\Psi\rvert_{C(\partial_FG)}}$ is equal to the identity map, and therefore in particular that ${C(\partial_FG)}$ is contained in the multiplicative domain of $\Psi$. Now fix elements ${x\in\A}$ and ${s\in G\setminus\{e\}}$. Let ${\eta\in\partial_FG}$; then it follows by \cite[Theorem 3.1]{bkko17} and Urysohn's lemma that there exists ${f\in C(\partial_FG)}$ such that ${f(s^{-1}\eta)=0}$ and ${f(\eta)=1}$, and so 
\begin{align*}
\Phi(x\lambda_s)(\eta)&=\Psi((1_{C(\partial_FG)}\otimes x)\,\lambda_s)(\eta)
\\&=\Psi((1_{C(\partial_FG)}\otimes x)\,\lambda_s)(\eta)\,f(\eta)
\\&=\Psi((1_{C(\partial_FG)}\otimes x)\,\lambda_s)(\eta)\,\Psi(f\otimes1_\A)(\eta)
\\&=\Psi((1_{C(\partial_FG)}\otimes x)\,\lambda_s\hsp(f\otimes1_\A))(\eta)
\\&=\Psi((1_{C(\partial_FG)}\otimes x)(sf\otimes1_\A)\,\lambda_s)(\eta)
\\&=\Psi(sf\otimes1_\A)(\eta)\,\Psi((1_{C(\partial_FG)}\otimes x)\,\lambda_s)(\eta)
\end{align*}
As ${\Psi(sf\otimes1_\A)=f(s^{-1}\eta)=0}$, this implies that ${\Phi(x\lambda_s)(\eta)=0}$; as this holds for every ${\eta\in\partial_FG}$, it follows that ${\Phi(x\lambda_s)=0}$. This implies that $\Phi$ agrees with ${\Phi\circ E}$ on ${\Span(\{x\lambda_s:x\in\A,\hsp s\in G\})}$; by taking norm limits it then follows that ${\Phi=\Phi\circ E}$, which proves the claim. 
\end{proof}

We are now prepared to prove the aforementioned proposition. 

\begin{prop}\label{strongly-minimal-crossed-product-char}
If ${(\A,G)}$ is a C*-flow, then ${(\A\rtimes_\lambda G,G)}$ is strongly minimal if and only if $G$ is C*-simple and ${(\A,G)}$ is strongly minimal. 
\end{prop}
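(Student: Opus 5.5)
The plan is to prove both directions through the extrinsic characterization in Proposition \ref{strong-minimality-extrinsic-char}, i.e. faithfulness of $G$-equivariant unital completely positive maps into $C(\partial_FG)$. Throughout, $G$ acts on $\A\rtimes_\lambda G$ by the inner action $x\mapsto\lambda_s x\lambda_s^*$, which restricts to the given action on $\A$ and to conjugation on $C^*_\lambda(G)$.

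\textbf{Forward direction.} Suppose ${(\A\rtimes_\lambda G,G)}$ is strongly minimal. Both $\A$ and $C^*_\lambda(G)$ are $G$-invariant unital C*-subalgebras, i.e. factors, so Proposition \ref{strong-minimality-hereditary} shows that ${(\A,G)}$ and ${(C^*_\lambda(G),G)}$ are strongly minimal. Corollary \ref{group-c*-simple-char} with $\pi=\lambda$ then gives simplicity of $C^*_\lambda(G)$, that is, C*-simplicity of $G$.

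\textbf{Reverse direction.} Suppose $G$ is C*-simple and ${(\A,G)}$ is strongly minimal, and let ${\Phi:\A\rtimes_\lambda G\to C(\partial_FG)}$ be a $G$-equivariant unital completely positive map.
1. By Lemma \ref{c*-simple-crossed-product-boundary-map-factors}, $\Phi=\Phi\circ E$.
2. The canonical expectation $E$ is $G$-equivariant for the inner action, since $E(\lambda_s x\lambda_s^*)=sE(x)$. Hence $\Phi|_\A$ is a $G$-equivariant unital completely positive map $\A\to C(\partial_FG)$.
3. By Proposition \ref{strong-minimality-extrinsic-char} applied to $(\A,G)$, the map $\Phi|_\A$ is faithful.
4. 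Now let $x\in\A\rtimes_\lambda G$ be positive with $\Phi(x)=0$. Then $\Phi(E(x))=0$ with $E(x)\geq0$, so $E(x)=0$.
5. Since $E$ is faithful on the reduced crossed product, $x=0$. Thus $\Phi$ is faithful, and Proposition \ref{strong-minimality-extrinsic-char} yields strong minimality of ${(\A\rtimes_\lambda G,G)}$.

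The main obstacle, namely that boundary maps see only the $\A$-part, is already handled by Lemma \ref{c*-simple-crossed-product-boundary-map-factors}. What remains is routine: checking the equivariance of $E$ and recalling that $E$ is faithful.
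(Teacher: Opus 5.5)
Your proposal is correct and follows essentially the same route as the paper. The forward direction uses heredity to factors together with Corollary~\ref{group-c*-simple-char}, and the reverse direction uses the factorization $\Phi=\Phi\circ E$ from Lemma~\ref{c*-simple-crossed-product-boundary-map-factors}, faithfulness of $\Phi|_\A$ and of $E$, and the boundary-map characterization of strong minimality. The paper leaves implicit the $G$-equivariance of $E$ and the positivity bookkeeping that you spell out.
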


\begin{proof}
Suppose first that ${(\A\rtimes_\lambda G,G)}$ is strongly minimal; then Proposition \ref{strong-minimality-hereditary} implies that ${(\A,G)}$ and ${(C_\lambda^*(G),G)}$ are strongly minimal. By Proposition \ref{group-c*-simple-char}, the latter implies that $G$ is C*-simple, which proves the forward direction. Conversely, suppose that $G$ is C*-simple and ${(\A,G)}$ is strongly minimal. Let ${\Phi:\A\rtimes_\lambda G\to C(\partial_FG)}$ be a $G$-equivariant unital positive linear map; then it follows by Lemma \ref{c*-simple-crossed-product-boundary-map-factors} that ${\Phi=\Phi\rvert_\A\circ E}$, where ${E:\A\rtimes_\lambda G\to\A}$ is the canonical expectation. As ${(\A,G)}$ is strongly minimal, Theorem \ref{strong-minimality-char} implies that ${\Phi\rvert_\A}$ is faithful; $\Phi$ is thus the composition of faithful linear maps, therefore itself faithful, and so ${(\A\rtimes_\lambda G,G)}$ is strongly minimal by Theorem \ref{strong-minimality-char}. 
\end{proof}

\begin{remark}
The backward direction in the above proposition can alternatively be deduced from \cite[Theorem 5.12]{rordam23} as follows: Proposition \ref{strong-minimality-local-char} implies that Condition ${(2)}$ in \cite[Theorem 5.12]{rordam23} holds, and thus ${C_\lambda^*(G)\subseteq\A\rtimes_\lambda G}$ is C*-irreducible. The claim then follows from Theorem \ref{c*-irreducibility-char}. 
\end{remark}

If ${(\A,G,\alpha)}$ is a C*-flow and $F$ is a discrete group which quotients onto $G$, then we can define a new C*-flow ${(\A,F,\beta)}$ by composing the quotient map with $\alpha$. As $\beta$ factors through $\alpha$, it is easily seen that ${(\A,F)}$ is strongly minimal if and only if ${(\A,G)}$ is strongly minimal. By taking ${F=\Fb_G}$ (the free group with basis $G$), we may make use of Proposition \ref{strongly-minimal-crossed-product-char} even when $G$ is not C*-simple. This is exemplified by the following proposition. 

\begin{prop}\label{strong-minimality-c*-irreducibility-char}
A C*-flow ${(\A,G)}$ is strongly minimal if and only if the inclusion 
\eqnsp{C_\lambda^*(\Fb_G)\subseteq\A\rtimes_\lambda\Fb_G}
is C*-irreducible. 
\end{prop}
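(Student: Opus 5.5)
We combine three earlier results: the remark just before the statement about pulling the action back along a quotient map, Proposition \ref{strongly-minimal-crossed-product-char}, and Theorem \ref{c*-irreducibility-char}.

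Let ${q:\Fb_G\to G}$ be the canonical surjection sending each basis element $g$ to $g$. Let $\Fb_G$ act on $\A$ through $q$, so that ${t\cdot x=q(t)\msp x}$.

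\textbf{Step 1.} As observed in the discussion preceding the statement, ${(\A,\Fb_G)}$ is strongly minimal if and only if ${(\A,G)}$ is. The reason is that $q$ is surjective, so the $\Fb_G$-invariant and $G$-invariant closed left ideals of $\A$ coincide.

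\textbf{Step 2.} Free groups on at least two generators are C*-simple (Powers). So, when ${\abs{G}\geq2}$, Proposition \ref{strongly-minimal-crossed-product-char} applies with $\Fb_G$ in place of $G$. It gives that ${(\A,\Fb_G)}$ is strongly minimal if and only if ${(\A\rtimes_\lambda\Fb_G,\Fb_G)}$ is strongly minimal, where $\Fb_G$ acts on the crossed product by the canonical action extending the action on $\A$ and conjugation on the unitaries $\lambda_t$.

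\textbf{Step 3.} The canonical action of $\Fb_G$ on ${\A\rtimes_\lambda\Fb_G}$ is implemented by conjugation by the unitaries ${\lambda_t}$: one has ${\lambda_t\msp x\lambda_r\msp\lambda_t^*=(t\cdot x)\lambda_{trt^{-1}}}$. The set ${\{\lambda_t:t\in\Fb_G\}}$ is a subgroup of ${U(C_\lambda^*(\Fb_G))}$ whose linear span is norm dense in ${C_\lambda^*(\Fb_G)}$. Theorem \ref{c*-irreducibility-char}, via the equivalence ${(1)\Leftrightarrow(3)}$ applied to this subgroup, therefore shows that strong minimality of ${(\A\rtimes_\lambda\Fb_G,\Fb_G)}$ is equivalent to C*-irreducibility of ${C_\lambda^*(\Fb_G)\subseteq\A\rtimes_\lambda\Fb_G}$. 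This uses that the map ${t\mapsto\lambda_t}$ is injective, so the action of the subgroup is literally the given $\Fb_G$-action.

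Chaining Steps 1–3 gives the claim.

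\textbf{Main obstacle.} The only delicate point is the degenerate case ${G=\{e\}}$, where ${\Fb_G\cong\Zb}$ is not C*-simple and Step 2 breaks down. Here we check directly that both sides are equivalent to ${\A=\C}$.

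First, ${(\A,\{e\})}$ is strongly minimal if and only if $\A$ has no nontrivial left ideals, that is, if and only if ${\A\cong\C}$.

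Second, the inclusion is ${C^*(\Zb)\subseteq\A\otimes C^*(\Zb)}$ with $\Zb$ acting trivially on $\A$. Since ${C^*(\Zb)\cong C(\Tb)}$ is not simple, this inclusion is never C*-irreducible. Consequently the statement as written would fail for ${\A=\C}$.

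So I expect either that the paper tacitly assumes $G$ is nontrivial, or that it handles this case by a convention. In writing the proof I would note the requirement ${\abs{G}\geq2}$ explicitly.
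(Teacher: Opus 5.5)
Your argument is the paper's: pull the action back along $\Fb_G\to G$, use C*-simplicity of $\Fb_G$ with Proposition~\ref{strongly-minimal-crossed-product-char}, then apply Theorem~\ref{c*-irreducibility-char} to the unitary group $\{\lambda_t\}$. One small precision: deducing strong minimality for this particular subgroup from C*-irreducibility uses $(1)\Rightarrow(2)$, not $(1)\Rightarrow(3)$. Your caveat is also correct: the paper simply asserts that $\Fb_G$ is C*-simple, which fails for $G=\{e\}$ since then $\Fb_G\cong\Zb$; indeed $(\C,\{e\})$ is strongly minimal while $C(\Tb)\subseteq C(\Tb)$ is not C*-irreducible, so the hypothesis $\abs{G}\geq2$ is genuinely needed.
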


\begin{proof}
As ${\Fb_G}$ is C*-simple, Proposition \ref{strongly-minimal-crossed-product-char} implies that ${(\A,\Fb_G)}$ is strongly minimal if and only if ${(\A\rtimes_\lambda\Fb_G,\Fb_G)}$ is strongly minimal; the claimed equivalence then follows from Theorem \ref{c*-irreducibility-char}. 
\end{proof}

\begin{remark}
In light of Theorem \ref{c*-irreducibility-char} and Proposition \ref{strong-minimality-c*-irreducibility-char}, we see that in some sense the question of whether a particular unital inclusion of C*-algebras is C*-irreducible is dual to the question of whether or not a particular C*-flow is strongly minimal. 
\end{remark}

If ${(\A,G)}$ and ${(\B,H)}$ are C*-flows, then there is a natural product action of ${G\times H}$ on ${\A\otimes_\mathrm{min}\B}$ defined by 
\eqnsp{(s,t)\cdot(x\otimes y)=(s\cdot x)\otimes(t\cdot y)}
In the commutative case, where ${\A=C(X)}$ for some $G$-flow $X$ and ${\B=C(Y)}$ for some $H$-flow $Y$, this corresponds to the usual product action of ${G\times H}$ on the Cartesian product ${X\times Y}$, and the corresponding flow is minimal if and only if $X$ and $Y$ are both minimal. 
\medskip

We obtain a partial generalization of this result to the noncommutative setting. We will require the following lemma. 

\begin{lemma}\label{tensor-product-open-proj-dense}
Let ${(\A,G)}$ and ${(\B,H)}$ be strongly minimal C*-flows. If ${e\in(\A\otimes_\mathrm{min}\B)^{**}}$ is a nonzero ${(G\times H)}$-invariant open projection, then $e$ is dense and 
\eqnsp{(\varphi\otimes\psi)^{**}(e)=1}
for all ${\varphi\in S(\A)}$ and ${\psi\in S(\B)}$. 
\end{lemma}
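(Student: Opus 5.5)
The plan is to slice $e$ with states on one tensor factor, use the fixed point characterization (Proposition \ref{strong-minimality-fixed-point-char}) on the other factor to force the slice to be a scalar, and then use that $e$ is a projection to pin the scalar to $1$.

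\textbf{Step 1: slicing.} Fix $\psi\in S(\B)$ and consider the slice map $\mathrm{id}\otimes\psi:\A\otimes_\mathrm{min}\B\to\A$. It is unital, completely positive, and $G$-equivariant for the action of $G\cong G\times\{e\}$. Since $e$ is open, there is an increasing net $(x_\alpha)$ in $(\A\otimes_\mathrm{min}\B)_+$ with $x_\alpha\nearrow e$. The images $(\mathrm{id}\otimes\psi)(x_\alpha)$ form an increasing net in $\A_\mathrm{sa}$ converging weak-* to $a_\psi:=(\mathrm{id}\otimes\psi)^{**}(e)$, so $a_\psi\in\A^{**}_\lsc$. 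Invariance of $e$ under $G\times\{e\}$ gives $s\,a_\psi=a_\psi$ for all $s\in G$. Proposition \ref{strong-minimality-fixed-point-char} then yields $a_\psi=c(\psi)1$ for some scalar $c(\psi)$.

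By symmetry, using strong minimality of $(\B,H)$, we get $(\varphi\otimes\mathrm{id})^{**}(e)=d(\varphi)1$. Hence
\[
c(\psi)=(\varphi\otimes\psi)^{**}(e)=d(\varphi)
\]
for all $\varphi,\psi$, so this common value is a constant $c\in[0,1]$. Since $e\neq0$ is open, there is a nonzero $x\in(\A\otimes_\mathrm{min}\B)_+$ with $x\le e$. Product states separate the points of the minimal tensor product, so some $(\varphi\otimes\psi)(x)>0$, and therefore $c>0$.

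\textbf{Step 2: $c=1$.} This is the step I expect to be the main obstacle. The idea is to vary the states. For $a\in\A$ and $b\in\B$ with $\varphi(a^*a),\psi(b^*b)\neq0$, the functionals $\varphi(a^*\cdot a)/\varphi(a^*a)$ and $\psi(b^*\cdot b)/\psi(b^*b)$ are again states. By weak-* continuity of the biduals of the corresponding maps, Step 1 gives
\[
(\varphi\otimes\psi)^{**}\big((a\otimes b)^*e(a\otimes b)\big)=c\,\varphi(a^*a)\,\psi(b^*b),
\]
and this holds trivially when $\varphi(a^*a)=0$ or $\psi(b^*b)=0$.

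Now pass to the GNS representations. Let $\pi=\pi_\varphi\otimes\pi_\psi$ act on $\H_\varphi\otimes\H_\psi$ with cyclic vector $\xi=\xi_\varphi\otimes\xi_\psi$, and let $\bar\pi$ be its normal extension to the bidual. Polarize first in $a$ with $b$ fixed, and then in $b$; each form involved is sesquilinear in one variable once the other is fixed. This gives
\[
\langle\bar\pi(e)(a\otimes b)\xi,(a'\otimes b')\xi\rangle=c\,\langle(a\otimes b)\xi,(a'\otimes b')\xi\rangle .
\]
The vectors $(a\otimes b)\xi$ span a dense subspace, so $\bar\pi(e)=c1$. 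Since $\bar\pi(e)$ is a projection, $c\in\{0,1\}$, and as $c>0$ we conclude $c=1$. This proves the displayed identity in the statement.

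\textbf{Step 3: density.} Let $f\ge e$ be a closed projection. By Step 2, $\bar\pi(f)=1$ for $\pi=\pi_\varphi\otimes\pi_\psi$ and every pair of states $\varphi,\psi$. Hence $\omega(f)=1$ for every vector state $\omega$ of every such representation. The direct sum of these representations over pure states is faithful on $\A\otimes_\mathrm{min}\B$. Therefore the weak-* closed convex hull of these vector states is all of $S(\A\otimes_\mathrm{min}\B)$. The set $\{\omega:\omega(f)=1\}$ is a weak-* closed face by the correspondence in Section \ref{nc-topology-subsection}, so it contains this hull and equals the whole state space. Thus $f=1$, and $e$ is dense.
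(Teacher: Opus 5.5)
Your proof is correct. Step 1 is essentially the paper's argument: slice by states, apply Proposition \ref{strong-minimality-fixed-point-char} on each side, and compare the two slices to get one constant. Steps 2 and 3, however, take a genuinely different route.

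\textbf{How the paper pins the constant and proves density.} It extends $\theta^{**}(\lambda 1-e)=0$ by linearity to all $\theta\in\A^*\otimes\B^*$. It then invokes the Effros--Lance lemma \cite[Lemma 2.4]{el77} to find states in $\A^*\otimes\B^*$ almost norming a positive $x\le e$ with $\norm{x}=1$, which gives $\lambda\ge 1$. For density, it observes that if $f$ is the closure of $e$, then $f^\perp$ is again a $(G\times H)$-invariant open projection. If $f^\perp$ were nonzero, the first part applied to it would give $(\varphi\otimes\psi)^{**}(f^\perp)=1$ as well, leading to $2\le 1$.

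\textbf{How your route differs.} You use the compressed states $\varphi(a^*\cdot a)/\varphi(a^*a)$ and polarization to upgrade the scalar identity to the operator identity $\bar\pi(e)=c1$ in each product GNS representation $\pi_\varphi\otimes\pi_\psi$. Since $\bar\pi(e)$ is a projection, this forces $c\in\{0,1\}$. To rule out $c=0$ you only need the elementary fact that product states separate points, not the Effros--Lance lemma. Density then follows from a general principle, with no second appeal to invariance: an open projection sent to $1$ by the normal extension of a faithful representation (here $\bigoplus\pi_\varphi\otimes\pi_\psi$) is dense. This rests on the Hahn--Banach fact that the vector states of a faithful representation have weak-* closed convex hull equal to the state space, together with weak-* closedness of $\{\omega:\omega^{**}(f)=1\}$ for closed $f$.

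\textbf{What each approach buys.} Your argument is more self-contained. Its intermediate conclusion, $\bar\pi(e)=1$ for every product representation, also makes transparent what the type I hypothesis in Theorem \ref{tensor-product-minimality} is for: it guarantees that every irreducible representation of $\A\otimes_\mathrm{min}\B$ has product form, which upgrades density to $e=1$. The paper's density step is shorter once the first conclusion is in hand, but it uses strong minimality a second time.
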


\begin{proof}
Let ${\varphi\in S(\A)}$; then the slice map ${\varphi\otimes\iota:\A\otimes_\mathrm{min}\B\to\B}$ is positive, whence in turn ${(\varphi\otimes\iota)^{**}}$ is positive. This implies that ${(\varphi\otimes\iota)^{**}(e)\in\B^{**}_\lsc}$; moreover, as ${(\varphi\otimes\iota)^{**}}$ is $H$-equivariant, it follows that ${(\varphi\otimes\iota)^{**}(e)}$ is $H$-invariant. Now let ${\psi\in S(\B)}$; then it follows identically that ${(\iota\otimes\psi)^{**}(e)}$ is $G$-invariant. Proposition \ref{strong-minimality-fixed-point-char} therefore implies that there exist ${\lambda_\varphi,\mu_\psi\in\R}$ such that 
\eqnsp{(\varphi\otimes\iota)^{**}(e)=\lambda_\varphi\msp1_{\B^{**}},\qquad\qquad(\iota\otimes\psi)^{**}(e)=\mu_\psi\msp1_{\A^{**}}}
As 
\eqnsp{\lambda_\varphi=\psi^{**}(\lambda_\varphi\msp1_{\B^{**}})=\psi^{**}((\varphi\otimes\iota)^{**}(e))=\varphi^{**}((\iota\otimes\psi)^{**}(e))=\varphi^{**}(\mu_\psi\msp1_{\A^{**}})=\mu_\psi}
for all ${\varphi\in S(\A)}$ and ${\mu\in S(\B)}$, it follows that there exists some ${\lambda\in\R}$ such that ${\lambda_\varphi=\mu_\psi=\lambda}$ for all ${\varphi\in S(\A)}$ and ${\psi\in S(\B)}$. In turn, this implies that 
\eqnsp{(\varphi\otimes\psi)^{**}(\lambda\msp1_{(\A\,\otimes_\mathrm{min}\,\B)^{**}}-e)=0}
for all ${\varphi\in S(\A)}$ and ${\psi\in S(\B)}$, and by taking linear combinations it then follows that 
\eqnsp{\theta^{**}(\lambda\msp1_{(\A\,\otimes_\mathrm{min}\,\B)^{**}}-e)=0}
for all ${\theta\in\A^*\otimes\B^*}$. As $e$ is open and nonzero, there exists a nonzero positive element ${x\in\A_+}$ such that ${x\leq e}$ with ${\norm{x}=1}$. Let ${\eps>0}$; then it follows by \cite[Lemma 2.4]{el77} that there exists a state ${\tau\in S(\A\otimes_\mathrm{min}\B)\cap(\A^*\otimes\B^*)}$ such that ${\tau(x)\geq1-\eps}$. This implies that 
\eqnsp{1-\eps\leq\tau(x)\leq\tau^{**}(e)=\lambda}
and hence by letting ${\eps\to0}$ it follows that ${\lambda\geq1}$. As ${\norm{e}=1}$, evidently ${\lambda\leq1}$, and thus ${\lambda=1}$. Now let ${f\in(\A\otimes_\mathrm{min}\B)^{**}}$ be the closure of $e$ and suppose ${f\neq1_{(\A\,\otimes_\mathrm{min}\,\B)^{**}}}$. Let ${f^\perp=1_{(\A\,\otimes_\mathrm{min}\,\B)^{**}}-f}$; then ${\smash{f^\perp}}$ is a nonzero ${(G\times H)}$-invariant open projection, and thus 
\eqnsp{(\varphi\otimes\psi)^{**}(f^\perp)=1}
for all ${\varphi\in S(\A)}$ and ${\psi\in S(\B)}$. However, this implies that 
\eqnsp{2=(\varphi\otimes\psi)^{**}(e)+(\varphi\otimes\psi)^{**}(f^\perp)\leq(\varphi\otimes\psi)^{**}(f)+(\varphi\otimes\psi)^{**}(f^\perp)=1}
for all ${\varphi\in S(\A)}$ and ${\psi\in S(\B)}$, which is evidently a contradiction; thus $e$ is dense. 
\end{proof}

The following lemma is likely well-known to experts, but we were unable to locate a reference in the literature, so we include a proof for completeness. 

\begin{lemma}\label{type-I-norm-closure-lemma}
Let $\A$ and $\B$ be unital C*-algebras. If $\A$ is type I, then the norm closure of ${\A^*\otimes\B^*}$ in ${(\A\otimes_\mathrm{min}\B)^*}$ contains ${P(\A\otimes_\mathrm{min}\B)}$. 
\end{lemma}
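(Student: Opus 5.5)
Let $\omega\in P(\A\otimes_\mathrm{min}\B)$ be a pure state and let $\pi_\omega$ be its GNS representation on $\H_\omega$, with cyclic unit vector $\xi_\omega$. Since $\omega$ is pure, $\pi_\omega$ is irreducible. Restricting $\pi_\omega$ to the commuting unital copies $\A\otimes1$ and $\B\otimes1$ gives commuting representations $\pi_\A$ and $\pi_\B$ with $\pi_\omega(a\otimes b)=\pi_\A(a)\pi_\B(b)$. Irreducibility gives $\pi_\A(\A)''\vee\pi_\B(\B)''=B(\H_\omega)$, so $\M=\pi_\A(\A)''$ is a factor: its center commutes with both $\pi_\A(\A)''$ and $\pi_\B(\B)''$, hence with $B(\H_\omega)$, and is therefore trivial.

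We now use that $\A$ is type I. Then every factor representation of $\A$ generates a type I factor, so $\M$ is a type I factor. Hence $\H_\omega\cong\K_1\otimes\K_2$ with $\M=B(\K_1)\otimes1$. Because $\pi_\B(\B)\subseteq\M'=1\otimes B(\K_2)$, we may write $\pi_\B=1\otimes\rho$ and $\pi_\A=\sigma\otimes1$. Here $\sigma$ is irreducible (its commutant inside $B(\K_1)$ is central in $\M$). Moreover $\rho(\B)''=B(\K_2)$, since $\pi_\omega$ is irreducible. Thus $\pi_\omega\cong\sigma\otimes\rho$, and $\omega=\omega_\xi\circ(\sigma\otimes\rho)$ for some unit vector $\xi\in\K_1\otimes\K_2$.

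Approximate $\xi$ in norm by finite sums $\xi_n=\sum_i u_i\otimes v_i$. Then $\theta_n(x)=\inner{(\sigma\otimes\rho)(x)\xi_n}{\xi_n}$ expands as a finite linear combination of the functionals $a\otimes b\mapsto\inner{\sigma(a)u_i}{u_j}\inner{\rho(b)v_i}{v_j}$. Each of these equals $\varphi_{ij}\otimes\psi_{ij}$ on elementary tensors, with $\varphi_{ij}\in\A^*$ and $\psi_{ij}\in\B^*$. Both sides are bounded on $\A\otimes_\mathrm{min}\B$, because $\sigma\otimes\rho$ is min-continuous, so they agree as elements of $(\A\otimes_\mathrm{min}\B)^*$. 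Hence $\theta_n\in\A^*\otimes\B^*$. Finally, $\norm{\theta_n-\omega}\leq\norm{\xi_n-\xi}(\norm{\xi_n}+\norm{\xi})\to0$, so $\omega$ lies in the norm closure of $\A^*\otimes\B^*$.

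The main obstacle is the type I step: justifying that $\pi_\A(\A)''$ is a type I factor. This rests on the standard theorem that a type I C*-algebra (GCR / postliminal) has only type I factor representations. After that, the splitting of $\H_\omega$ as a tensor product and the approximation argument are routine.
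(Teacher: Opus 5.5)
Your proposal is correct and is essentially the paper's argument: realize the GNS representation of the pure state as $\sigma\otimes\rho$ with $\sigma,\rho$ irreducible, which is where $\A$ being type I enters, and then approximate the representing vector by finite sums of elementary tensors to get norm approximants in $\A^*\otimes\B^*$. The only difference is that you justify the tensor splitting via the type I factor $\pi_\A(\A)''$, whereas the paper simply asserts it (also, $\B\otimes1$ in your first paragraph should read $1\otimes\B$).
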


\begin{proof}
Let ${\psi\in P(\A\otimes_\mathrm{min}\B)}$ and let ${(\H_\psi,\pi_\psi,\xi_\psi)}$ be a GNS triple for $\psi$. As $\psi$ is pure, in turn ${\pi_\psi}$ is irreducible; as $\A$ is type I, there are irreducible representations ${\rho:\A\to B(\K)}$ and ${\sigma:\B\to B(\mathcal{L})}$ such that ${\pi_\psi}$ is unitarily equivalent to ${\rho\otimes\sigma}$, and hence there exists a unit vector ${\eta\in\K\otimes\mathcal{L}}$ such that ${\psi=\inner{(\rho\otimes\sigma)(\acdot)\,\eta}{\eta}}$. Expanding $\eta$ as a (possibly infinite) sum of elementary tensors in ${\K\otimes\mathcal{L}}$ shows that $\psi$ is a norm-convergent sum of functionals in ${\A^*\otimes\B^*}$, which proves the claim. 
\end{proof}

\begin{remark}
The converse of the preceding lemma holds as well, but as we will not need this fact, we make no attempt to prove it here (suspecting, as alluded to above, that this is likely folklore). 
\end{remark}

We are now ready to prove Theorem \ref{tensor-product-minimality}. 

\begin{proof}[Proof of Theorem \ref{tensor-product-minimality}]
Suppose that ${(\A\otimes_\mathrm{min}\B,G\times H)}$ is strongly minimal; then ${(\A,G)}$ and ${(\B,H)}$ are strongly minimal by Proposition \ref{strong-minimality-hereditary}. Conversely, suppose that ${(\A,G)}$ and ${(\B,H)}$ are strongly minimal; then Lemma \ref{tensor-product-open-proj-dense} implies that every nonzero ${(G\times H)}$-invariant open projection satisfies the equality ${(\varphi\otimes\psi)^{**}(e)=1}$ for every ${\varphi\in S(\A)}$ and ${\psi\in S(\B)}$. It follows by Lemma \ref{type-I-norm-closure-lemma} that the norm closure of ${\A^*\otimes\B^*}$ in ${(\A\otimes_\mathrm{min}\B)^*}$ contains ${P(\A\otimes_\mathrm{min}\B)}$, and therefore it follows by taking linear combinations and norm limits that ${\tau^{**}(e)=1}$ for every ${\tau\in P(\A\otimes_\mathrm{min}\B)}$. As $e$ is open, this implies that ${e=1_{(\A\,\otimes_\mathrm{min}\,\B)^{**}}}$; the claim then follows from Proposition \ref{strong-minimality-nc-topology-char}. 
\end{proof}

\begin{remark}
It is unclear whether the assumption that $\A$ is type I can be eliminated. If it could be, this would significantly upgrade the result. In particular, this upgraded version of Theorem \ref{tensor-product-minimality} would resolve \cite[Question 7.3]{rordam23}: if ${\B_1\subseteq\A_1}$ and ${\B_2\subseteq\A_2}$ are C*-irreducible inclusions, then it follows by Theorem \ref{c*-irreducibility-char} that ${(\A_1,U(\B_1))}$ and ${(\A_2,U(\B_2))}$ are strongly minimal. The upgrade would then imply that ${(\A_1\otimes_\mathrm{min}\A_2,U(\B_1)\times U(\B_2))}$ is strongly minimal, and as 
\eqnsp{\Span(\{u\otimes v:u\in U(\B_1),\hsp v\in U(\B_2)\})=\B_1\otimes\B_2}
it would then follow by Theorem \ref{c*-irreducibility-char} that ${\B_1\otimes_\mathrm{min}\B_2\subseteq\A_1\otimes_\mathrm{min}\A_2}$ is C*-irreducible. Conversely, it could be shown that the assumption that $\A$ is type I is unnecessary by resolving \cite[Question 7.3]{rordam23} in the affirmative: if ${(\A,G)}$ and ${(\B,H)}$ are strongly minimal C*-flows, then Proposition \ref{strong-minimality-c*-irreducibility-char} implies that ${C_\lambda^*(\Fb_G)\subseteq\A\rtimes_\lambda\Fb_G}$ and ${C_\lambda^*(\Fb_H)\subseteq\B\rtimes_\lambda\Fb_H}$ are C*-irreducible inclusions, and thus 
\eqnsp{C_\lambda^*(\Fb_G)\otimes_\mathrm{min}C_\lambda^*(\Fb_H)\subseteq(\A\rtimes_\lambda\Fb_G)\otimes_\mathrm{min}(\B\rtimes_\lambda\Fb_H)\cong(\A\otimes_\mathrm{min}\B)\rtimes_\lambda(\Fb_G\times\Fb_H)}
is C*-irreducible as well, whence Theorem \ref{c*-irreducibility-char} and Proposition \ref{strong-minimality-hereditary} imply that ${(\A\otimes_\mathrm{min}\B,G\times H)}$ is strongly minimal. 
\end{remark}

We recall that minimal $G$-flows $X$ and $Y$ are said to be \emph{disjoint} when ${X\times Y}$ is a minimal $G$-flow with respect to the diagonal action; equivalently, two (unital) commutative C*-flows are disjoint if their minimal tensor product is (strongly) minimal when equipped with the diagonal action. Our next result shows that if $G$ is C*-simple, then ${(C_\lambda^*(G),G)}$ is disjoint from every strongly minimal C*-flow (including itself). 
\medskip

We will require the following lemma. 

\begin{lemma}\label{inner-crossed-tensor-product}
Let ${(\A,G)}$ be a C*-flow. If the action of $G$ on $\A$ is inner, then ${(\A\otimes_\mathrm{min}C_\lambda^*(G),G)}$ is strongly minimal if and only if ${(\A\rtimes_\lambda G,G)}$ is strongly minimal. 
\end{lemma}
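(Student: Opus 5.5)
The plan is to produce a $G$-equivariant $*$-isomorphism between the two C*-flows in question. Strong minimality is defined purely in terms of the algebra and the action, so it is invariant under such isomorphisms, and the claim would follow at once. Suppose the action is implemented by a homomorphism $u:G\to U(\A)$ with $sx=u_sxu_s^*$.

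\emph{Choice of action on the tensor product.} On $\A\otimes_\mathrm{min}C_\lambda^*(G)$ I take the diagonal action $s\cdot(x\otimes y)=u_sxu_s^*\otimes\lambda_sy\lambda_s^*$. On $\A\rtimes_\lambda G$ I take the action induced by conjugation by $\lambda_s$, which restricts to the given action on $\A$ and to conjugation on $C_\lambda^*(G)$. These are the actions used in Theorem \ref{crossed-tensor-product-minimality-char}.

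\emph{Step 1: the untwisting isomorphism.} This is the standard isomorphism for inner actions, $\A\rtimes_{\alpha,\lambda}G\cong\A\rtimes_{\mathrm{id},\lambda}G\cong\A\otimes_\mathrm{min}C_\lambda^*(G)$. Concretely, I would define
\eqnsp{\Theta(x\lambda_s)=xu_s\otimes\lambda_s.}
\begin{itemize}
\item Covariance holds: $(u_s\otimes\lambda_s)(x\otimes1)(u_s\otimes\lambda_s)^*=u_sxu_s^*\otimes1$. So $x\mapsto x\otimes1$ and $s\mapsto u_s\otimes\lambda_s$ form a covariant pair, giving a $*$-homomorphism from the full crossed product.
\item To see that $\Theta$ descends to the reduced crossed product and is isometric, I would realise both algebras on $\H\otimes\ell^2(G)$, with $\A\subseteq B(\H)$ faithful. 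Let $W$ be the unitary $W(\xi\otimes\delta_t)=u_t\xi\otimes\delta_t$. Conjugating the regular covariant representation $(\tilde x,1\otimes\lambda_s)$, where $\tilde x(\xi\otimes\delta_t)=t^{-1}x\,\xi\otimes\delta_t$, by $W$ should give $x\otimes1$ and $u_s\otimes\lambda_s$ respectively. This is Fell absorption in disguise.
\item Surjectivity: the image contains $x\otimes1$ and $u_s\otimes\lambda_s$, hence $1\otimes\lambda_s=(u_s^*\otimes1)(u_s\otimes\lambda_s)$, so the range is dense and closed.
\end{itemize}

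\emph{Step 2: equivariance.} Conjugation by $\lambda_t$ on the crossed product sends $x\lambda_s\mapsto (tx)\lambda_{tst^{-1}}$. Under $\Theta$ this becomes $u_txu_t^*u_{tst^{-1}}\otimes\lambda_{tst^{-1}}$. On the other hand, the diagonal action applied to $xu_s\otimes\lambda_s$ gives $u_txu_su_t^*\otimes\lambda_{tst^{-1}}$. These agree because $u$ is a homomorphism, so $u_t^*u_{tst^{-1}}=u_su_t^*$. Hence $\Theta$ is $G$-equivariant, and the lemma follows.

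\emph{Main obstacle.} The main difficulty is that $u$ might only implement the action up to a scalar cocycle, i.e.\ be a projective representation, if ``inner'' is read loosely. In that case I would either assume a genuine homomorphism $G\to U(\A)$, which I expect is what the paper means, or twist $\lambda$ by the cocycle. The other care point is checking that the $W$-conjugation identities hold exactly with the paper's crossed product conventions; this is routine bookkeeping.
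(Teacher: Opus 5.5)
Your proposal is correct and is the paper's proof. The paper uses exactly the map $x\lambda_s\mapsto x\,\pi(s)\otimes\lambda_s$, for a homomorphism $\pi:G\to U(\A)$ implementing the action, and simply asserts that it is a $G$-equivariant $*$-isomorphism; your unitary $W$ and the identity $u_t^*u_{tst^{-1}}=u_su_t^*$ supply the verifications it omits, and your reading of ``inner'' as implemented by a genuine homomorphism is the paper's.
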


\begin{proof}
Let ${\pi:G\to U(\A)}$ denote the homomorphism implementing the action of $G$ on $\A$; then the linear map ${\varphi:\A\rtimes_\lambda G\to\A\otimes_\mathrm{min}C_\lambda^*(G)}$ specified by letting 
\eqnsp{\varphi(x\lambda_s)=x\msp\pi(s)\otimes\lambda_s}
for every ${x\in\A}$ and ${s\in G}$ is a $G$-equivariant *-isomorphism. As strong minimality is preserved by equivariant *-isomorphisms of C*-flows, the claim follows. 
\end{proof}

\begin{prop}\label{strongly-minimal-tensor-product-char}
If ${(\A,G)}$ is a C*-flow, then ${(\A\otimes_\mathrm{min}C_\lambda^*(G),G)}$ is strongly minimal if and only if $G$ is C*-simple and ${(\A,G)}$ is strongly minimal. 
\end{prop}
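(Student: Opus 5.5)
Here the action of $G$ on ${\A\otimes_\mathrm{min}C_\lambda^*(G)}$ is the diagonal one: $s$ acts on $\A$ by the given action and on ${C_\lambda^*(G)}$ by conjugation by $\lambda_s$. The natural plan is to reduce to Lemma \ref{inner-crossed-tensor-product} and Proposition \ref{strongly-minimal-crossed-product-char}. The one obstacle is that the action of $G$ on $\A$ need not be inner, so we first pass to a larger algebra on which it becomes inner.

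For the forward direction, suppose ${(\A\otimes_\mathrm{min}C_\lambda^*(G),G)}$ is strongly minimal. Both ${\A\otimes1}$ and ${1\otimes C_\lambda^*(G)}$ are $G$-invariant unital C*-subalgebras, i.e.\ factors, with the restricted actions being the given action on $\A$ and conjugation on ${C_\lambda^*(G)}$. By Proposition \ref{strong-minimality-hereditary}, ${(\A,G)}$ and ${(C_\lambda^*(G),G)}$ are strongly minimal. Corollary \ref{group-c*-simple-char} then shows that $G$ is C*-simple.

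For the converse, assume $G$ is C*-simple and ${(\A,G)}$ is strongly minimal. Put ${\tilde\A=\A\rtimes_\lambda G}$ with the action of $G$ by conjugation by the canonical unitaries $\lambda_s$. This action is inner, implemented by ${\pi(s)=\lambda_s\in U(\tilde\A)}$, and it restricts on $\A$ to the original action. By Proposition \ref{strongly-minimal-crossed-product-char}, ${(\tilde\A,G)}$ is strongly minimal. Apply Proposition \ref{strongly-minimal-crossed-product-char} again to the C*-flow ${(\tilde\A,G)}$: since $G$ is C*-simple, ${(\tilde\A\rtimes_\lambda G,G)}$ is strongly minimal. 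Lemma \ref{inner-crossed-tensor-product}, applicable because the action on $\tilde\A$ is inner, then gives that ${(\tilde\A\otimes_\mathrm{min}C_\lambda^*(G),G)}$ is strongly minimal.

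It remains to observe that ${\A\otimes_\mathrm{min}C_\lambda^*(G)}$ sits inside ${\tilde\A\otimes_\mathrm{min}C_\lambda^*(G)}$ as a unital $G$-invariant C*-subalgebra. This uses injectivity of the minimal tensor product for the inclusion ${\A\subseteq\tilde\A}$ and equivariance of that inclusion. Proposition \ref{strong-minimality-hereditary} then yields strong minimality of ${(\A\otimes_\mathrm{min}C_\lambda^*(G),G)}$.

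The only step requiring care is the identification of actions. We must check that the diagonal action on ${\tilde\A\otimes_\mathrm{min}C_\lambda^*(G)}$ is exactly the one used in Lemma \ref{inner-crossed-tensor-product}, where $G$ acts on ${\tilde\A\rtimes_\lambda G}$ by the dual-side conjugation and the isomorphism is ${y\lambda_s\mapsto y\,\pi(s)\otimes\lambda_s}$. We must also check that its restriction to ${\A\otimes C_\lambda^*(G)}$ is the given diagonal action. Both reduce to verifying the formulas on elementary tensors.
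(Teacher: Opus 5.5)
Your proposal is correct and follows the paper's proof essentially step for step. In both, the forward direction restricts to the factors $\A\otimes 1$ and $1\otimes C_\lambda^*(G)$ and then invokes Corollary \ref{group-c*-simple-char}. The converse applies Proposition \ref{strongly-minimal-crossed-product-char} twice, untwists the inner action on $\A\rtimes_\lambda G$ via Lemma \ref{inner-crossed-tensor-product}, and passes to the factor $\A\otimes_\mathrm{min}C_\lambda^*(G)$ using Proposition \ref{strong-minimality-hereditary}.
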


\begin{proof}
The forward direction follows from Proposition \ref{strong-minimality-hereditary}. Suppose that $G$ is C*-simple and ${(\A,G)}$ is strongly minimal; then applying Proposition \ref{strongly-minimal-crossed-product-char} twice in succession implies ${((\A\rtimes_\lambda G)\rtimes_\lambda G,G)}$ is also strongly minimal. As the action of $G$ on ${\A\rtimes_\lambda G}$ is inner, it follows from Lemma \ref{inner-crossed-tensor-product} that the product C*-flow ${((\A\rtimes_\lambda G)\otimes_\mathrm{min}C_\lambda^*(G),G)}$ is strongly minimal. As ${\A\otimes_\mathrm{min}C_\lambda^*(G)}$ is a factor of ${(\A\rtimes_\lambda G)\otimes_\mathrm{min}C_\lambda^*(G)}$, Proposition \ref{strong-minimality-hereditary} then yields the claim. 
\end{proof}

We have now gathered both of the pieces which constitute Theorem \ref{crossed-tensor-product-minimality-char}. 

\begin{proof}[Proof of Theorem \ref{crossed-tensor-product-minimality-char}]
The claim follows immediately from the combination of Proposition \ref{strongly-minimal-crossed-product-char} and Proposition \ref{strongly-minimal-tensor-product-char}. 
\end{proof}

\begin{remark}\label{non-type-I-product-minimality}
If $G$ is a C*-simple discrete group, then Proposition \ref{strongly-minimal-tensor-product-char} implies that the product flow ${(C_\lambda^*(G)\otimes_\mathrm{min}\B,G\times H)}$ is strongly minimal for every strongly minimal C*-flow ${(\B,H)}$, which yields a modest extension of Theorem \ref{tensor-product-minimality} beyond the type I case. This can be seen as follows: it is evident that ${(\B,G\times\Fb_H)}$ is strongly minimal, where $G$ acts trivially and ${\Fb_H}$ is equipped with the action induced by its quotient onto $H$. As ${G\times\Fb_H}$ is C*-simple, it follows by Proposition \ref{strongly-minimal-tensor-product-char} that the diagonal flow ${(C_\lambda^*(G\times\Fb_H)\otimes_\mathrm{min}\B,G\times\Fb_H)}$ is strongly minimal. As ${C_\lambda^*(G)\otimes_\mathrm{min}\B}$ is a factor of this C*-flow, Proposition \ref{strong-minimality-hereditary} then implies that ${(C_\lambda^*(G)\otimes_\mathrm{min}\B,G\times\Fb_H)}$ is strongly minimal, and thus in turn ${(C_\lambda^*(G)\otimes_\mathrm{min}\B,G\times H)}$ is strongly minimal. 
\end{remark}

\begin{remark}\label{nc-disjointness-remark}
Proposition \ref{strongly-minimal-tensor-product-char} furnishes an interesting example of a genuinely noncommutative phenomenon: although ${(C_\lambda^*(G)\otimes_\mathrm{min}C_\lambda^*(G),G)}$ is strongly minimal for any C*-simple group $G$, it is never the case that ${(X\times X,G)}$ is minimal for any $G$-flow $X$ where ${\abs{X}\geq2}$, since the diagonal is always a nontrivial $G$-invariant closed subset. Another example of a nontrivial noncommutative strongly minimal C*-flow which is disjoint from every other strongly minimal C*-flow can be found in \cite{avitzour84}. 
\end{remark}

Invoking Proposition \ref{strongly-minimal-crossed-product-char} and unfolding the action as in the proof of Proposition \ref{strongly-minimal-tensor-product-char} yields the following proposition. 

\begin{prop}
Let ${H\leq G}$ be an inclusion of discrete groups. If the inclusion ${C_\lambda^*(H)\subseteq C_\lambda^*(G)}$ is C*-irreducible, then ${C_\lambda^*(\Delta_H^N)\subseteq C_\lambda^*(G\times H^{N-1})}$ is C*-irreducible for all ${N\in\Nb}$, where 
\eqnsp{\Delta_H^N=\{(s,\dots,s)\in G\times H^{N-1}:s\in H\}}
In particular, if $G$ is C*-simple, then ${C_\lambda^*(\Delta_G^N)\subseteq C_\lambda^*(G^N)}$ is C*-irreducible for all ${N\in\Nb}$. 
\end{prop}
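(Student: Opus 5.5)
The proof goes by induction on $N$, through Corollary \ref{group-c*-irreducibility-char} with $\pi=\lambda$. That corollary says $C_\lambda^*(K)\subseteq C_\lambda^*(L)$ is C*-irreducible, for $K\leq L$, exactly when $(C_\lambda^*(L),K)$ is strongly minimal under conjugation. So it suffices to show that $(C_\lambda^*(G\times H^{N-1}),\Delta_H^N)$ is strongly minimal, where the diagonal copy of $H$ acts by conjugation. For $N=1$ this is just the hypothesis, again by Corollary \ref{group-c*-irreducibility-char}. Note also that the hypothesis gives strong minimality of $(C_\lambda^*(H),H)$ by Proposition \ref{strong-minimality-hereditary}, since $C_\lambda^*(H)$ is an $H$-invariant unital subalgebra of $C_\lambda^*(G)$. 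By Corollary \ref{group-c*-simple-char}, $H$ is therefore C*-simple.

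For the inductive step, set $\A_N=C_\lambda^*(G\times H^{N-1})$, with $H$ acting diagonally by conjugation, and assume $(\A_N,H)$ is strongly minimal. We have $\A_{N+1}\cong\A_N\otimes_\mathrm{min}C_\lambda^*(H)$ $H$-equivariantly, where $H$ acts on $C_\lambda^*(H)$ by conjugation. The action of $H$ on $\A_N$ is inner, implemented by the unitaries $\lambda_{(s,\dots,s)}$. Hence Lemma \ref{inner-crossed-tensor-product} shows that $(\A_N\otimes_\mathrm{min}C_\lambda^*(H),H)$ is strongly minimal if and only if $(\A_N\rtimes_\lambda H,H)$ is. Here $H$ acts on the tensor product by the diagonal action, which on $C_\lambda^*(H)$ is the conjugation action. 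Since $H$ is C*-simple and $(\A_N,H)$ is strongly minimal, Proposition \ref{strongly-minimal-crossed-product-char} gives strong minimality of $(\A_N\rtimes_\lambda H,H)$. Therefore $(\A_{N+1},H)$ is strongly minimal, and Corollary \ref{group-c*-irreducibility-char} converts this back into C*-irreducibility of $C_\lambda^*(\Delta_H^{N+1})\subseteq C_\lambda^*(G\times H^N)$.

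The special case follows by taking $H=G$. C*-simplicity of $G$ means $C_\lambda^*(G)\subseteq C_\lambda^*(G)$ is C*-irreducible, since the only intermediate algebra is $C_\lambda^*(G)$ itself, which is simple.

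The main point needing care is the action bookkeeping in Lemma \ref{inner-crossed-tensor-product}. The group acting on the tensor product $\A_N\otimes_\mathrm{min}C_\lambda^*(H)$ must act on the second factor by conjugation, not by some other action. One should also check that the identification of the crossed product with the tensor product, $x\lambda_s\mapsto x\,\pi(s)\otimes\lambda_s$, intertwines the dual conjugation action on $\A_N\rtimes_\lambda H$ with the diagonal action on the tensor product. In the lemma's map, conjugation by $\lambda_t$ in the crossed product sends $x\lambda_s$ to $(tx)\lambda_{tst^{-1}}$. This maps to $(tx)\,\pi(tst^{-1})\otimes\lambda_{tst^{-1}}$, which equals $t\cdot(x\pi(s))\otimes\lambda_t\lambda_s\lambda_t^*$, because $\pi(t)x\pi(s)\pi(t)^*=(tx)\pi(tst^{-1})$. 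So the identification is equivariant, as required.
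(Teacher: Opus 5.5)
Your proof is correct and follows the paper's route. The paper likewise obtains the result by invoking Proposition \ref{strongly-minimal-crossed-product-char} and unfolding the inner conjugation action through Lemma \ref{inner-crossed-tensor-product}, which is exactly your inductive step; that step could also be shortened by citing Proposition \ref{strongly-minimal-tensor-product-char} directly.
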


\section{Examples}\label{example-section}

In this section we consider various examples of strongly minimal C*-flows. The first examples in this section are worked out by hand, while the remainder of the section is devoted to obtaining a characterization of strong minimality for arbitrary subhomogeneous C*-flows and then examining such examples using our accumulated machinery. 
\medskip

Our first example is a noncommutative analogue of the example considered in Remark \ref{ergodic-non-minimal-remark}. 

\begin{example}
Let $\H$ be a Hilbert space and $G$ be a discrete group. Suppose that ${(B(\H),G)}$ is a C*-flow. As every automorphism of ${B(\H)}$ fixes the set of all minimal projections in ${B(\H)}$, in turn every such automorphism fixes ${K(\H)}$, and therefore ${K(\H)}$ is $G$-invariant. In particular, ${(B(\H),G)}$ is never minimal when $\H$ is infinite dimensional, and hence never strongly minimal. On the other hand, when $\H$ is finite dimensional ${(B(\H),G)}$ is strongly minimal if and only if it is ergodic as a W*-flow (see Example \ref{motivating-example-ii}). 
\end{example}

The phenomenon exhibited in the above example for the case where $\H$ is infinite dimensional is a simple obstruction to a C*-algebra having any strongly minimal actions. In general, a C*-algebra having no strongly minimal actions is equivalent to it having no minimal actions: this can be seen from the fact that the inner automorphism group fixing a left ideal implies it is a two-sided ideal. 
\medskip

The next example gives a family of strongly minimal actions that work for all rotation algebras (in particular, these actions are not just the conjugation action when the algebras are simple).

\begin{example}\label{ExRot}
Let ${\theta\in\R}$ and ${\omega=e^{2\pi i\theta}}$. We recall that the \emph{rotation algebra} ${\A_\theta}$ with parameter $\theta$ is the universal C*-algebra generated by two unitaries $u$ and $v$ subject to the relation 
\eqnsp{uv=\omega vu}
Let ${\alpha_\theta:\Zb\to\Aut(C(\Tb))}$ be the group homomorphism defined by 
\eqnsp{\alpha_\theta(n)(f)(z)=f(\omega^{-n}z)}
It is a standard fact that ${\A_\theta\cong C(\Tb)\rtimes_{\alpha_\theta,\msp\lambda}\Zb}$, where $u$ is identified with ${\mathrm{id}_\Tb\in C(\Tb)\subseteq C(\Tb)\rtimes_{\alpha_\theta,\msp\lambda}\Zb}$ and $v$ is identified with ${\lambda_1\in C_\lambda^*(\Zb)\subseteq C(\Tb)\rtimes_{\alpha_\theta,\msp\lambda}\Zb}$. Define an automorphism ${\beta\in\Aut(\A_\theta)}$ by 
\eqnsp{\beta(u)=v^*,\qquad\qquad\beta(v)=u}
This is a well-defined *-homomorphism since ${v^*u=\omega uv^*}$, and it is an automorphism (of period $4$) because ${\beta^4=\mathrm{id}_{\A_\theta}}$. We observe that ${\beta(C(\Tb))=C_\lambda^*(\Zb)}$ and ${\beta(C_\lambda^*(\Zb))=C(\Tb)}$ under the previously mentioned isomorphism of ${\A_\theta}$ and ${C(\Tb)\rtimes_{\alpha_\theta,\msp\lambda}\Zb}$. Fix some ${\xi\in\Tb}$ which is not a root of unity; then there is a natural automorphism ${\gamma_\xi\in\Aut(\A_\theta)}$ given by 
\eqnsp{\gamma_\xi(u)=\xi u,\qquad\qquad\gamma_\xi(v)=v}
When viewed inside the crossed product ${C(\Tb)\rtimes_{\alpha_\theta,\msp\lambda}\Zb}$, we see that ${\gamma_\xi}$ acts trivially on ${C_\lambda^*(\Zb)}$ and that the action on ${C(\Tb)}$ arises from the action ${z\to\xi z}$ on ${\Tb}$. These two automorphisms induce an action of ${G=\Zb/4\Zb*\Zb}$ on ${\A_\theta}$, where ${\Zb/4\Zb\subseteq\Zb/4\Zb*\Zb}$ acts via $\beta$ and ${\Zb\subseteq\Zb/4\Zb*\Zb}$ acts via ${\gamma_\xi}$. 
\medskip

\noindent We will now show that ${(\A_\theta,G)}$ is a strongly minimal C*-flow. Let $\mathcal I$ be a nonzero closed $G$-invariant left ideal in ${\mathcal A_\theta}$. Note that if ${x\in A_\theta\cong C(\mathbb T)\rtimes_{\alpha_\theta, \lambda} \mathbb Z}$, then $x$ has a unique Fourier series ${x\sim \sum_{n=-\infty}^\infty x_n \lambda_n}$, where ${x_n\in C(\mathbb T)}$. It is easy to see that ${\gamma_\xi(x)\sim \sum_{n=-\infty}^\infty \gamma_{\xi}(x_n)\lambda_n}$. It therefore follows by Birkhoff's ergodic theorem that if ${P_nx=\frac{1}{n}\sum_{k=0}^{n-1} \gamma_{\xi}^n(x)}$, then ${(P_nx)_{n=1}^\infty}$ converges in norm to an element in ${C(\mathbb T)\rtimes_{\alpha_\theta, \lambda}\mathbb Z}$ with Fourier series ${\sum_{n=-\infty}^\infty m(x_n)\lambda_n}$, where $m$ is the Lebesgue measure on $\mathbb T$. Note that this element actually belongs to ${C_\lambda^* (\mathbb Z)}$ because all of the coefficients are complex numbers and ${\mathbb Z}$ is amenable. Now, if ${x\in \mathcal I}$ is nonzero and positive, then ${P_nx\in \mathcal I}$ for all $n$, so it follows that ${C_\lambda^* (\mathbb Z)\cap \mathcal I\neq 0}$ as ${m(x_0)\neq 0}$ as $m$ is faithful and ${x_0}$ is nonzero and positive (this is because the canonical conditional expectation for the reduced crossed product is faithful). As $\beta$ swaps ${C_\lambda^*( \mathbb Z)}$ and ${C(\mathbb T)}$ it follows that ${C(\mathbb T)\cap \mathcal I\neq 0}$. As ${C(\mathbb T)\cap \mathcal I}$ is a closed left ideal and is $\mathbb Z$-invariant, it follows that ${C(\mathbb T)\cap \mathcal I=C(\mathbb T)}$ as the $\mathbb Z$-action on $\mathbb T$ is minimal. It follows that ${1\in \mathcal I}$, so ${\mathcal I=\mathcal A_\theta}$. As $G$ is C*-simple and ${(\mathcal A_\theta, G)}$ is a strongly minimal C*-flow, Proposition \ref{strongly-minimal-crossed-product-char} combined with Theorem \ref{c*-irreducibility-char} shows that ${C_\lambda^* (G)\subseteq \mathcal A_\theta\rtimes _\lambda G}$ is a C*-irreducible inclusion.
\end{example}

We will now investigate strong minimality for C*-flows wherein the underlying C*-algebra is type I or is somehow related to type I C*-algebras. 
\medskip

Some basic knowledge about the spectrum of a general C*-algebra will be needed in the sequel; the unfamiliar reader should consult \cite[Chapter 4]{pedersen18} for more details. Suppose that ${(\mathcal A, G)}$ is a C*-flow. When this is the case, there are natural action of $G$ on both the spectrum ${\widehat{\mathcal A}}$ and the primitive ideal space ${\Prim(\A)}$ of $\mathcal A$ by homeomorphisms. The action on ${\widehat{\mathcal A}}$ is defined by letting ${s[\pi]=[s\pi]}$ for ${s\in G}$ and $\pi$ an irreducible representation of $\mathcal A$, where ${s\pi}$ is the irreducible representation which is defined by ${s\pi(x)=\pi(s^{-1}x)}$ for ${x\in \mathcal A}$. The action on ${\Prim(\mathcal A)}$ is obtained by simply applying a given automorphism from $G$ to a given primitive ideal in $\mathcal A$. It is easy to verify that the natural map ${\widehat{\mathcal A}\to \Prim(\mathcal A)}$ and the natural order preserving bijections between the set of closed two-sided ideals of $\mathcal A$ and the open subsets of either ${\widehat{\mathcal A}}$ or ${\Prim(\mathcal A)}$ are all $G$-equivariant. Combining these facts with Proposition \ref{strong-minimality-hereditary-minimality-char} easily yields the following.

  \begin{prop}\label{SpecMin}
Let ${(\A,G)}$ be a C*-flow. The following are equivalent: 
\begin{enumerate}
\item ${(\A,G)}$ is strongly minimal. 
\item ${(\smash{\widehat{\B}},G)}$ is minimal for every factor ${\B\subseteq\A}$. 
\item ${(\Prim(\B),G)}$ is minimal for every factor ${\B\subseteq\A}$. 
\end{enumerate}
  \end{prop}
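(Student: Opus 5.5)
The plan is to reduce everything to Proposition \ref{strong-minimality-hereditary-minimality-char}, which says that $(\A,G)$ is strongly minimal if and only if every factor $\B\subseteq\A$ is minimal. It then suffices to show, for a fixed factor $\B$ with the restricted $G$-action, that the following three statements are equivalent:
\begin{enumerate}
\item $(\B,G)$ is minimal, i.e.\ $\B$ has no nontrivial $G$-invariant closed two-sided ideals;
\item $(\widehat{\B},G)$ is minimal;
\item $(\Prim(\B),G)$ is minimal.
\end{enumerate}
Applying this equivalence to every factor simultaneously gives the proposition.

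For the equivalence (1)$\Leftrightarrow$(3), I would use the order-preserving bijection $I\mapsto\{P\in\Prim(\B):I\not\subseteq P\}$ between closed two-sided ideals of $\B$ and open subsets of $\Prim(\B)$. Equivariance holds because $sI\not\subseteq sP$ if and only if $I\not\subseteq P$, so $G$-invariant ideals correspond exactly to $G$-invariant open sets. Taking complements, $G$-invariant open sets correspond to $G$-invariant closed sets. Under this correspondence $\{0\}$ and $\B$ map to $\emptyset$ and $\Prim(\B)$, so the absence of nontrivial invariant ideals is the same as the absence of nontrivial invariant closed subsets.

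For (2)$\Leftrightarrow$(3), I would use the canonical continuous open surjection $\widehat{\B}\to\Prim(\B)$, $[\pi]\mapsto\ker\pi$. The topology on $\widehat{\B}$ is by definition pulled back from $\Prim(\B)$, so the open (equivalently closed) subsets of $\widehat{\B}$ are precisely the preimages of open (closed) subsets of $\Prim(\B)$. The map is $G$-equivariant, since $\ker(s\pi)=\{x:\pi(s^{-1}x)=0\}=s\ker\pi$. Hence a closed set $F\subseteq\Prim(\B)$ is $G$-invariant if and only if its preimage is, and since the map is surjective, $F$ is nontrivial if and only if its preimage is nontrivial. Therefore minimality of the two flows coincides.

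There is no real obstacle here. The only point needing care is checking that the actions are well defined (e.g.\ $s[\pi]$ does not depend on the representative) and that "minimal" for these possibly non-Hausdorff spaces is read as "no nontrivial invariant closed subsets", matching the definition in Section \ref{dynamics-subsection}. I would not use the orbit-closure formulation, to avoid relying on any separation axioms.
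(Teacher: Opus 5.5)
Your proposal is correct and follows the paper's own argument. The paper also reduces to Proposition \ref{strong-minimality-hereditary-minimality-char} (strong minimality iff every factor is minimal) and then uses the $G$-equivariance of the canonical map $\widehat{\B}\to\Prim(\B)$ and of the ideal--open-set correspondences. You have simply written out the equivariance checks that the paper leaves as ``easy to verify''.
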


A C*-algebra $\mathcal A$ is \emph{homogeneous} (respectively, \emph{subhomogeneous}) \emph{of degree $n$} for some ${n\in\Nb}$ if every irreducible representation of $\mathcal A$ has dimension equal (respectively, less than or equal) to $n$, and simply \emph{homogeneous} (respectively, \emph{subhomogeneous}) if it is homogeneous (respectively, subhomogeneous) of degree $n$ for some ${n\in\Nb}$. Clearly every homogeneous C*-algebra is subhomogeneous, and both are examples of type I C*-algebras. Subhomogeneous C*-algebras are simultaneously generalizations of commutative C*-algebras and matrix algebras.  Accordingly, the prototypical example of a unital homogeneous C*-algebra of degree $n$ is ${C(X,M_n)\cong C(X)\otimes M_n}$, where $X$ is a compact Hausdorff space and ${n\in\Nb}$. In fact, every subhomogeneous C*-algebra is isomorphic to a C*-subalgebra of a C*-algebra of the aforementioned type. The next corollary shows that we may restrict our attention to homogeneous C*-algebras when studying strong minimality. 

  \begin{cor}\label{HomogSubalg}
      Let ${(\mathcal A, G)}$ be a strongly minimal C*-flow and ${\B\subseteq\A}$ be a factor. If $\B$ admits a nonzero finite-dimensional representation, then $\mathcal B$ is homogeneous.
  \end{cor}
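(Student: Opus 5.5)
We argue through the spectrum. By Proposition \ref{strong-minimality-hereditary}, $(\B,G)$ is strongly minimal, so by Proposition \ref{SpecMin} the flow $(\smash{\widehat{\B}},G)$ is minimal. The plan is to find a nonempty $G$-invariant open subset of $\smash{\widehat{\B}}$ on which the dimension of irreducible representations is bounded by a fixed $n$. We then show that, because of minimality, this subset must be all of $\smash{\widehat{\B}}$ and the dimension must be constant.

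The key tool is the standard lower semicontinuity fact for the spectrum. For each $k\in\Nb$, the set
\[
\smash{\widehat{\B}}_{\le k}=\{[\pi]\in\smash{\widehat{\B}}:\dim\pi\le k\}
\]
is closed in $\smash{\widehat{\B}}$. It is the hull of the closed two-sided ideal generated by the polynomial identity of degree $2k$, evaluated on $\B$ (equivalently, by the values of the standard polynomial $s_{2k}$). Also, $\dim(s\pi)=\dim\pi$, so each $\smash{\widehat{\B}}_{\le k}$ is $G$-invariant.

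Now let $n$ be the smallest dimension of a nonzero irreducible representation of $\B$. Such a representation exists: a nonzero finite-dimensional representation decomposes into irreducibles, each of finite dimension. Then $\smash{\widehat{\B}}_{\le n}$ is a nonempty closed $G$-invariant subset of the minimal flow $\smash{\widehat{\B}}$, so it equals $\smash{\widehat{\B}}$. Hence every irreducible representation has dimension at most $n$. By the minimality of $n$, the set $\smash{\widehat{\B}}_{\le n-1}$ is empty, so every irreducible representation has dimension exactly $n$. This shows that $\B$ is homogeneous of degree $n$.

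The main obstacle is justifying that $\smash{\widehat{\B}}_{\le k}$ is closed, or equivalently $G$-invariant as a two-sided ideal. If citing Pedersen (or Kaplansky's identity argument) is unwelcome, I would define
\[
\J_k=\bigcap\{\ker\pi:\dim\pi\le k\}.
\]
This is a $G$-invariant two-sided ideal. Its hull consists of exactly the irreducible representations of dimension at most $k$, because $\B/\J_k$ embeds in a product of copies of $M_{\le k}$ and therefore satisfies the polynomial identity $s_{2k}=0$. Once this is in place, the remaining steps are formal.
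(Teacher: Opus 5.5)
Your proof is correct and is essentially the paper's argument: take $n$ to be the least dimension of an irreducible representation of $\B$, observe that $\{[\pi]\in\smash{\widehat{\B}}:\dim\pi\le n\}$ is a nonempty closed $G$-invariant set, and use minimality of $(\smash{\widehat{\B}},G)$ from Proposition \ref{SpecMin} to conclude that it is all of $\smash{\widehat{\B}}$. The only differences are that the paper cites \cite[Proposition 3.6.3]{dixmier77} for closedness where you sketch the standard-polynomial argument, and that the ``open subset'' in your opening plan should read ``closed subset'' (the argument itself correctly uses closedness).
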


  \begin{proof}
      If $\mathcal B$ has a nonzero finite-dimensional representation then it has a nonzero finite-dimensional irreducible representation. Let ${n\in\Nb}$ be the least positive integer ${m\in\Nb}$ for which there exists a finite-dimensional irreducible representation of $\mathcal B$ of dimension $m$. If ${F\subseteq \widehat{\mathcal B}}$ denotes the collection of equivalence classes of irreducible representations of dimension less than or equal to $n$, then $F$ is closed (by \cite[Proposition 3.6.3]{dixmier77}) and $G$-invariant (as $\pi$ and ${s\pi}$ have the same dimension for any representation $\pi$ and ${s\in G}$). By Proposition \ref{SpecMin}, the action of $G$ on ${\widehat{\mathcal B}}$ is minimal, so as ${F\neq \emptyset}$ it follows that ${F=\widehat{\mathcal B}}$. As $n$ is the least integer for which there exists an irreducible representation of $\mathcal B$ of said dimension, it follows that $\mathcal B$ is homogeneous of degree $n$.
  \end{proof}

  In light of Corollary \ref{HomogSubalg}, it makes sense to try and find a characterization of strong minimality of C*-flows where the underlying C*-algebras are homogeneous. In this section, we obtain a simple characterization of strong minimality in this situation. This result will require a small amount of preparation.
  \medskip

  As mentioned above, every unital subhomogeneous C*-algebra (and hence every unital homogeneous C*-algebra) is isomorphic to a C*-subalgebra of ${C(X,M_n)}$ for some compact Hausdorff space $X$ and ${n\in\Nb}$. As strong minimality is a condition on closed $G$-invariant left ideals, it will be helpful to have a result that characterizes the structure of the closed left ideals for C*-subalgebras of ${C(X,M_n)}$. This is exactly what the next result entails.

  \begin{lemma}\label{LeftHomog}
  Let $\mathcal A$ be a C*-algebra, $X$ be a locally compact Hausdorff space, and ${\mathcal B\subseteq C_0(X,\A)}$ be a C*-subalgebra. If ${\mathcal I\subseteq\B}$ is a closed left ideal, then there exists a collection ${(\mathcal I_x)_{x\in X}}$ of closed left ideals in $\mathcal A$ such that
 \eqnsp{\mathcal I=\{f\in \mathcal B:f(x)\in \mathcal I_x\ \text{for all }x\in X\}}
and which is minimal in the sense that if ${(\mathcal I'_x)_{x\in X}}$ is another collection of closed left ideals that satisfies the above equality, then ${\mathcal I_x\subseteq \mathcal I'_x}$ for every ${x\in X}$.
  \end{lemma}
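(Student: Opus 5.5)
We take $\I_x$ to be the smallest closed left ideal of $\A$ that contains all values at $x$ of elements of $\I$, and show that the resulting description recovers $\I$ by testing against pure states. For each ${x\in X}$, let $\I_x$ be the closed left ideal of $\A$ generated by ${\{f(x):f\in\I\}}$, i.e.\ the norm closure of ${\{f(x):f\in\I\}+\A\cdot\{f(x):f\in\I\}}$.

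Minimality is then immediate. If ${(\I'_x)_{x\in X}}$ is any collection of closed left ideals for which ${\I=\{f\in\B:f(x)\in\I'_x\ \forall x\}}$, then each $f\in\I$ satisfies ${f(x)\in\I'_x}$. Hence ${\I'_x}$ is a closed left ideal of $\A$ containing ${\{f(x):f\in\I\}}$, so ${\I_x\subseteq\I'_x}$. It therefore remains to prove the equality ${\I=\J}$, where ${\J=\{f\in\B:f(x)\in\I_x\ \forall x\in X\}}$. The inclusion ${\I\subseteq\J}$ holds by construction.

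For the reverse inclusion we use the noncommutative-topology description of closed left ideals, in its version for (possibly non-unital) C*-algebras (cf.\ \cite{pedersen18}): a closed left ideal of $\B$ is the intersection of the left kernels $\N_\psi$ over the pure states ${\psi\in P(\B)}$ with ${\I\subseteq\N_\psi}$. The point is that the defining face is generated by its extreme points. So fix such a pure state $\psi$ and an element ${f\in\J}$; we must show ${\psi(f^*f)=0}$. The argument has three steps.
\begin{enumerate}
\item Extend $\psi$ to a pure state $\tilde\psi$ of ${C_0(X,\A)\cong C_0(X)\otimes_\mathrm{min}\A}$. This is possible because the set of state extensions is a nonempty weak-* compact face, and its extreme points are pure.
\item Identify $\tilde\psi$. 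Since $C_0(X)$ is central, the GNS representation of $\tilde\psi$ is irreducible and so sends $C_0(X)$ to scalars; hence $\tilde\psi$ restricts to a character $\delta_x$ on $C_0(X)$. It follows that ${\tilde\psi(g)=\omega(g(x))}$ for some pure state $\omega$ of $\A$ and all ${g\in C_0(X,\A)}$.
\item Compare kernels. For ${g\in\I}$ we have ${\omega(g(x)^*g(x))=\psi(g^*g)=0}$, so ${g(x)\in\N_\omega}$. Since $\N_\omega$ is a closed left ideal of $\A$, this forces ${\I_x\subseteq\N_\omega}$. Consequently ${f(x)\in\N_\omega}$, and therefore ${\psi(f^*f)=\omega(f(x)^*f(x))=0}$.
\end{enumerate}
This gives ${f\in\I}$, and hence ${\J=\I}$.

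The main obstacle is step (2), showing that the pure extension $\tilde\psi$ factors as a point evaluation $\delta_x$ tensored with a pure state $\omega$ of $\A$. This relies on the centrality of $C_0(X)$ and on the identification ${C_0(X,\A)\cong C_0(X)\otimes_\mathrm{min}\A}$. A secondary point needing care is that the intersection description of closed left ideals by pure-state left kernels also holds when $\B$ is non-unital.
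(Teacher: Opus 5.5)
Your proof is correct and follows essentially the paper's route: you show $\mathcal{J}\subseteq\mathcal{I}$ by testing against pure states of $\mathcal{B}$ that annihilate $\mathcal{I}$, extend them to $C_0(X,\mathcal{A})$, and use that such pure states have the form $g\mapsto\omega(g(x))$ (your step (2) is the fact the paper cites from Takesaki, Chapter IV, Theorem 4.14). Your $\mathcal{I}_x$ is the same ideal as the paper's $\mathcal{A}^{**}p\cap\mathcal{A}$, but you compare it directly with the closed left ideal $\mathcal{N}_\omega$, which avoids the paper's use of Cohen--Hewitt and of the identity $\mathcal{I}_x\cap\mathcal{B}_x=\mathcal{J}_x$ and is a slight simplification.
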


  \begin{proof}
      For ${x\in X}$, let ${\mathcal J_x=\{f(x):f\in \mathcal I\}}$ and ${\mathcal B_x=\{f(x):f\in \mathcal B\}}$. It is clear for each ${x\in X}$ that ${\mathcal B_x}$ is a C*-subalgebra of $\mathcal A$ and it follows from the Cohen-Hewitt factorization theorem that ${\mathcal J_x}$ is a closed left ideal in ${\mathcal B_x}$. Let ${\mathcal I_x}$ be the least closed left ideal in $\mathcal A$ with ${\mathcal J_x=\mathcal I_x\cap \mathcal B_x}$ (so if ${p\in \mathcal B_x^{**}\subseteq \mathcal A^{**}}$ is the support projection for ${\mathcal J_x}$, then ${\mathcal I_x=\mathcal A^{**}p\cap \mathcal A}$). Define ${\mathcal J\subseteq \mathcal B}$ to be equal to
      \eqnsp{\{f\in \mathcal B:f(x)\in \mathcal I_x\ \text{for all } x\in X\}}

      \noindent It is clear from the definition of the family ${(\mathcal I_x)_{x\in X}}$ that ${\mathcal I\subseteq \mathcal J}$ and also that $\mathcal J$ is a closed left ideal in $\mathcal B$. Thus, to prove that ${\mathcal I=\mathcal J}$, it suffices to prove that every pure state which vanishes on $\mathcal I$ also vanishes on $\mathcal J$. So let $\phi$ be a pure state on $\mathcal B$ which vanishes on $\mathcal I$. As $\phi$ can be extended to a pure state on ${C_0(X,\mathcal A)}$, it follows from \cite[Chapter IV, Theorem 4.14]{takesaki02} that there is a pure state $\psi$ on $\mathcal A$ and an ${x\in X}$ with ${\phi(f)=\psi(f(x))}$ for every ${f\in \mathcal B}$. As $\phi$ vanishes on $\mathcal I$, it follows that ${\psi(\mathcal J_x)=0}$. Therefore for ${f\in \mathcal J}$ it must be the case that ${f(x)\in \mathcal I_x\cap \mathcal B_x=\mathcal J_x}$ and consequently that ${\phi(f)=\psi(f(x))=0}$. This proves that ${\mathcal I=\mathcal J}$. The family ${(\mathcal I_x)_{x\in X}}$ being minimal is an immediate consequence of its construction.
  \end{proof}

  It will be useful to have a concrete description of a homogeneous C*-algebra for the forthcoming proofs. If $\mathcal A$ is a unital C*-algebra which is homogeneous of degree $n$, then we let $X_\mathcal A$ denote the set of all nonzero *-homomorphisms $\mathcal A\to M_n$ equipped with the product topology. It is easy to verify that ${X_\mathcal A}$ is compact. As $\mathcal A$ is homogeneous of degree $n$ it is easy to see that ${X_\mathcal A}$ consists of irreducible representations of $\mathcal A$ and that every irreducible representation of $\mathcal A$ is unitarily equivalent to an element in ${X_\mathcal A}$. In particular, letting ${\Phi_\mathcal A:\mathcal A\to C(X_\mathcal A, M_n)}$ be defined by
  \eqnsp{(\Phi_\mathcal A(a))(\pi)=\pi(a),\qquad\pi\in X_\mathcal A,\ a\in \mathcal A}

  \noindent it follows that ${\Phi_\mathcal A}$ is an injective *-homomorphism. Thus, ${\Phi_\mathcal A(\mathcal A)\subseteq C(X_\mathcal A, M_n)}$ gives a concrete representation of $\mathcal A$ as a C*-subalgebra of one of the model spaces for homogeneous C*-algebras. There is a natural action of ${U(M_n)}$ on ${X_\mathcal A}$ defined by
  \eqnsp{(u\cdot \pi)(a)=u\pi(x)u^*,\ u\in U(M_n),\qquad\pi\in X_\mathcal A,\ a\in \mathcal A}
It is obvious from the definitions that every ${f\in \Phi_\mathcal A(\mathcal A)}$ is ${ U(M_n)}$-equivariant. Slightly less obvious, but equally important for the sequel, is that ${\Phi_\mathcal A(\mathcal A)}$ consists of precisely the functions in ${C(X_\mathcal A, M_n)}$ which are ${ U(M_n)}$-equivariant (a proof of this assertion can be found in the proof of \cite[Theorem 5.2]{niemiec15}). In light of the above, the C*-algebra ${\Phi_\mathcal A(\mathcal A)\subseteq C(X_\mathcal A, M_n)}$ consisting exactly of the continuous ${ U(M_n)}$-equivariant functions ${X_\mathcal A\to M_n}$ will be referred to as \textit{the canonical representation of $\mathcal A$}.
  \medskip

  If ${(\mathcal A, G)}$ is a C*-flow with $\mathcal A$ homogeneous of degree $n$, then the action of $G$ on $\mathcal A$ becomes extremely simple when viewed inside the canonical representation of $\mathcal A$. Indeed, there is a natural action of $G$ on ${X_\mathcal A}$ (defined in the same way as the action on ${\widehat{\mathcal A}}$) and this in turn defines an action on ${C(X_\mathcal A, M_n)}$. As can easily be checked, the ${ U(M_n)}$-action on ${X_\mathcal A}$ commutes with the $G$-action on ${X_\mathcal A}$, so it follows that ${\Phi_\mathcal A}$ is $G$-equivariant. Thus, if ${(\mathcal A, G)}$ is a C*-dynamical system with $\mathcal A$ homogeneous of degree $n$ then there is an action of $G$ on ${X_\mathcal A}$ which commutes with the ${ U(M_n)}$-action so that
  \eqnsp{(sf)(\pi)=f(s^{-1}\pi),\qquad s\in G,\ f\in \Phi_\mathcal A(\mathcal A),\ \pi\in X_\mathcal A}

  The discussion above leads naturally to the following upgrade of Lemma \ref{LeftHomog} for homogeneous C*-algebras.

  \begin{lemma}\label{HomogCanStruc}
      Let ${(\mathcal A, G)}$ be a C*-flow with $\mathcal A$ homogeneous of degree $n$. If ${\mathcal I\subseteq\A}$ is a closed left ideal, then there is a collection ${(p_\pi)_{\pi\in X_\mathcal A}}$ of projections in ${M_n}$ with
      \eqnsp{\mathcal I=\{a\in \mathcal A:\pi(a)\in M_np_\pi\ \text{for every } \pi\in X_\mathcal A\}}

      \noindent and such that the map ${\pi\mapsto p_\pi}$ is ${ U(M_n)}$-equivariant. If $\mathcal I$ is also $G$-invariant, then the map ${\pi\mapsto p_\pi}$ is additionally $G$-equivariant.
  \end{lemma}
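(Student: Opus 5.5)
We will apply Lemma \ref{LeftHomog} to the canonical representation $\Phi_\A(\A)\subseteq C(X_\A,M_n)$, identifying $\A$ with its image. This produces the minimal family $(\I_\pi)_{\pi\in X_\A}$ of closed left ideals of $M_n$ with
\[\I=\{a\in\A:\pi(a)\in\I_\pi\ \text{for all }\pi\in X_\A\}.\]
Every closed left ideal of $M_n$ has the form $M_np$ for a unique projection $p\in M_n$. So we define $p_\pi$ by $\I_\pi=M_np_\pi$, and the displayed description of $\I$ follows at once.

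It remains to prove the two equivariance assertions. The idea is that minimality of the family makes it canonical, and hence invariant under any symmetry that preserves $\I$. For $U(M_n)$-equivariance, fix $u\in U(M_n)$ and consider the family $\I'_\pi=u^*\I_{u\cdot\pi}u=M_n(u^*p_{u\cdot\pi}u)$, again a family of closed left ideals. Since $(u\cdot\pi)(a)=u\pi(a)u^*$, we have $\pi(a)\in\I'_\pi$ if and only if $(u\cdot\pi)(a)\in\I_{u\cdot\pi}$. Hence the family $(\I'_\pi)$ describes the same $\I$, and minimality gives $\I_\pi\subseteq u^*\I_{u\cdot\pi}u$ for all $u$ and $\pi$. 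Applying this with $u^*$ in place of $u$ and the point $u\cdot\pi$ in place of $\pi$ gives the reverse inclusion. Therefore $\I_{u\cdot\pi}=u\I_\pi u^*$, which means $p_{u\cdot\pi}=up_\pi u^*$.

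For $G$-equivariance, assume $s\I=\I$. Here we use $(sa)(\pi)=a(s^{-1}\pi)$, that is, $\pi(sa)=(s^{-1}\pi)(a)$. Consider the family $\I''_\pi=\I_{s^{-1}\pi}$. Then $a\in\I$ if and only if $sa\in\I$, if and only if $\pi(sa)\in\I_\pi$ for all $\pi$. Reindexing shows that $\I=\{b:\pi(b)\in\I_{s\pi}\ \forall\pi\}$. Minimality therefore yields $\I_\pi\subseteq\I_{s\pi}$ for all $s\in G$ and all $\pi$. Replacing $s$ by $s^{-1}$ gives equality, so $p_{s\pi}=p_\pi$. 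This is the $G$-equivariance for the trivial action of $G$ on $M_n$, which is consistent with how $G$ acts on $C(X_\A,M_n)$.

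The main obstacle is a point in applying Lemma \ref{LeftHomog}. That lemma builds $\I_\pi$ from the fibers $\B_\pi$, and we must check that the minimality there is genuinely minimality among all families describing $\I$. Its last sentence asserts exactly this, so we may rely on it. The remaining steps are straightforward bookkeeping of the action conventions.
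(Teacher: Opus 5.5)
Your proposal is correct and follows essentially the same approach as the paper. You apply Lemma \ref{LeftHomog} to the canonical representation, write the minimal fibre ideals as $M_np_\pi$, and use minimality of the family, transported along $\pi\mapsto u\cdot\pi$ and $\pi\mapsto s\pi$, to get $p_{u\cdot\pi}=up_\pi u^*$ and $p_{s\pi}=p_\pi$. The paper only says the $G$-case follows by ``the same technique''; your explicit computation shows this equivariance is really $G$-invariance, which is the form used later in the proof of Theorem \ref{subhomogeneous-minimality}.
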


  \begin{proof}
 We recall that every left ideal of ${M_n}$ is of the form ${M_np}$ for a projection $p$ in ${M_n}$. Viewing $\mathcal A$ in its canonical representation and applying Lemma \ref{LeftHomog} yields a collection ${(p_\pi)_{\pi\in X_\mathcal A}}$ of projections in ${M_n}$ with 
      \eqnsp{\mathcal I=\{a\in \mathcal A:\pi(a)\in M_np_\pi\ \text{for every } \pi\in X_\mathcal A\}}

      \noindent and such that if ${(p'_\pi)_{\pi\in X_\mathcal A}}$ is another collection of projections satisfying the above equation then ${p_\pi\leq p'_\pi}$. Now if ${a\in \mathcal A}$, ${u\in  U(M_n)}$, and ${\pi \in X_\mathcal A}$, then ${u\pi(a)u^*=(u\cdot \pi)(a)\in M_np_{u\cdot\pi}}$ if and only if ${\pi(a)\in  M_n(u^*p_{u\cdot \pi}u)}$. For a fixed ${u\in  U(M_n)}$ it follows that 
      \eqnsp{\mathcal I=\{a\in \mathcal A:\pi(a)\in M_n(u^*p_{u\cdot \pi}u)\ \text{for every }\pi\in X_\mathcal A\}}

      \noindent so by minimality of the family ${(p_\pi)_{\pi\in X_\mathcal A}}$ it must be that ${p_\pi\leq u^*p_{u\cdot \pi}u}$ and hence ${up_\pi u^*\leq p_{u\cdot \pi}}$ for every ${\pi\in X_\mathcal A}$. Replacing $u$ with ${u^*}$ and $\pi$ with ${u\cdot \pi}$ yields ${p_{u\cdot \pi}\leq up_\pi u^*}$ so ${p_{u\cdot\pi}=up_\pi u^*}$. Thus the map ${\pi\mapsto p_\pi}$ is ${ U(M_n)}$-equivariant. If $\mathcal I$ is $G$-invariant, then the same technique shows ${\pi\mapsto p_\pi}$ is $G$-equivariant.
  \end{proof}

  The next lemma is the last piece that is needed for the aforementioned characterization of strong minimality of actions on homogeneous C*-algebras.

  \begin{lemma}\label{OpCloCon}
      Let $X$ be a locally compact Hausdorff space, ${\mathcal B\subseteq C_0(X,\mathcal M_n)}$ be a C*-subalgebra, ${\mathcal I\subseteq\B}$ be a closed left ideal, and ${(p_x)_{x\in X}}$ the projections for $\mathcal I$ in ${M_n}$ from Lemma \ref{LeftHomog}. If for ${k=0,1,\dots, n}$ we let 
      \eqnsp{U_k=\{x\in X:\text{the rank of } p_x\ \text{is at least } k\}}

      \noindent and
      \eqnsp{F_k=\{x\in X:\text{the rank of } p_x\ \text{is equal to } k\}}

      \noindent then the following hold:
      
      \begin{enumerate}
          \item ${U_k}$ is open in $X$;
          \item ${F_k}$ is closed in $U_k$; and 
          \item the map ${x\mapsto p_x}$ is continuous on $F_k$.
      \end{enumerate}
  \end{lemma}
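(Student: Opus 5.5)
The plan is to first identify $p_x$ concretely, so that the rank conditions become statements about ranks of values $f(x)$ with $f\in\I$. Recall from the proof of Lemma \ref{LeftHomog} that ${\J_x=\{f(x):f\in\I\}}$ is a closed left ideal of the finite-dimensional C*-algebra ${\B_x\subseteq M_n}$. Hence ${\J_x=\B_x q_x}$ for a projection ${q_x\in\B_x}$, and $p_x$ is its support projection, so ${p_x=q_x}$ and ${\I_x=M_np_x}$.

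From this I will extract two facts. First, ${p_x\in\J_x}$, so there is ${f\in\I}$ with ${f(x)=p_x}$. Second, every ${f\in\I}$ satisfies ${f(x)=f(x)p_x}$, so ${\operatorname{rank}f(x)\leq\operatorname{rank}p_x}$ and the range of ${f(x)^*}$ lies in the range of $p_x$. Together these give
\[\operatorname{rank}p_x=\max_{f\in\I}\operatorname{rank}f(x).\]

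Parts (1) and (2) then follow quickly. For (1), the rank of a matrix is lower semicontinuous, so each ${\{x:\operatorname{rank}f(x)\geq k\}}$ is open. Hence ${U_k=\bigcup_{f\in\I}\{x:\operatorname{rank}f(x)\geq k\}}$ is open. For (2), ${F_k=U_k\setminus U_{k+1}}$ is the intersection of $U_k$ with the closed set ${X\setminus U_{k+1}}$, so it is closed in $U_k$.

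For (3), fix ${x_0\in F_k}$ and pick ${f\in\I}$ with ${f(x_0)=p_{x_0}}$. Set ${h=f^*f}$; since ${f^*\in\B}$, we have ${h\in\I}$, ${h\geq0}$ and ${h(x_0)=p_{x_0}}$. The argument then runs in three steps.
\begin{enumerate}
\item For every $y$, the range of $h(y)$ lies in the range of $f(y)^*$, which by the second fact lies in the range of $p_y$.
\item By lower semicontinuity, ${\operatorname{rank}h(y)\geq k}$ for $y$ in a neighbourhood of $x_0$. So for $y\in F_k$ near $x_0$, step 1 forces $p_y$ to equal the range (support) projection of $h(y)$.
\item By continuity, $h(y)$ lies within $1/4$ of $p_{x_0}$ for $y$ near $x_0$, so ${\sigma(h(y))\subseteq[0,1/4]\cup[3/4,5/4]}$. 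Take a continuous ${g:\R\to[0,1]}$ with $g=0$ on ${[0,1/4]}$ and $g=1$ on ${[3/4,5/4]}$. Then $g(h(y))$ is the spectral projection of $h(y)$ for ${[3/4,5/4]}$, and ${\operatorname{rank}g(h(y))=k}$ by continuity of spectral projections near $x_0$.
\end{enumerate}
For ${y\in F_k}$ near $x_0$, the support of $h(y)$ has rank exactly $k$ and contains the range of $g(h(y))$. Hence the two coincide, and ${p_y=g(h(y))}$. Since ${y\mapsto g(h(y))}$ is norm continuous, ${x\mapsto p_x}$ is continuous on $F_k$ near $x_0$.

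I expect the main obstacle to be the identification of $p_x$ at the start, namely the facts ${p_x\in\J_x}$ and ${f(x)=f(x)p_x}$. This needs the observation that the "least closed left ideal of $M_n$ meeting $\B_x$ in $\J_x$" is generated by the unit $q_x$ of $\J_x$. The rest is finite-dimensional perturbation theory.
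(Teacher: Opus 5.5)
Your proof is correct and follows the paper's argument. Both pick a positive $f\in\I$ with $f(x_0)=p_{x_0}$, use lower semicontinuity of rank together with $f(y)=f(y)p_y$ to get openness of $U_k$, and identify $p_y$ with the range projection of $f(y)$ on $F_k$ near $x_0$. The differences are minor: you justify the existence of such an $f$ (via $p_x\in\J_x$ and $h=f^*f$), and you prove continuity of the range projection explicitly through $p_y=g(h(y))$, where the paper cites continuity of the range-projection map on rank-$k$ matrices as well known.
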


  \begin{proof}
      It will first be shown that ${U_k}$ is open. Fixing ${z\in U_k}$, ${(M_n p_x)_{x\in X}}$ being as in Lemma \ref{LeftHomog} makes it easy to see that there is a positive ${f\in \mathcal I}$ with ${f(z)=p_z}$. As $f$ is continuous and the rank function ${M_n\to \{0,1,\dots, n\}}$ is lower semicontinuous, it follows that there is a neighbourhood $W$ of $z$ in $X$ in which the rank of ${f(w)}$ is at least $k$ for ${w\in W}$. As ${f(w)p_w=f(w)}$ it must be the case that ${p_w}$ has rank at least $k$ for ${w\in W}$. It follows that ${U_k}$ is open. That ${F_k}$ is closed in ${U_k}$ is now immediate as ${F_k=U_k\cap (X\setminus U_{k+1})}$ (where ${U_{k+1}=\emptyset}$ when ${k=n}$). Showing that ${x\mapsto p_x}$ is continuous on ${F_k}$ can be done locally: fix ${z\in F_k}$ and again select a positive ${f\in \mathcal I}$ so that ${f(z)=p_z}$ along with a neighbourhood $W$ of $z$ so that the rank of ${f(w)}$ is at least $k$ when ${w\in W}$. It is well known that the function ${P:M_n\to M_n}$ which maps a matrix to the projection onto its range is continuous on the set of matrices of rank equal to $k$ so it follows that ${P\circ f}$ is continuous on ${W\cap F_k}$. But $f$ being positive implies that ${p_wf(w)=f(w)}$, so combining this with the fact that ${p_w}$ has rank $k$ for ${w\in W\cap F_k}$ it is easily seen that ${P(f(w))=p_w}$ for ${w\in W\cap F_k}$. Thus ${x\mapsto p_x}$ is continuous on the neighbourhood ${W\cap F_k}$ of $z$ in ${F_k}$. Hence ${x\mapsto p_x}$ is continuous on ${F_k}$.
  \end{proof}

  Before diving into the proof of Theorem \ref{subhomogeneous-minimality}, it is important to mention the following: if $\mathcal A$ is a unital homogeneous C*-algebra, then ${\widehat{\mathcal A}}$ is a compact Hausdorff space and the topology is the quotient topology from the action ${ U(M_n)\curvearrowright X_\mathcal A}$ (see \cite[Proposition 3.6.4]{dixmier77}). In particular, if ${(\mathcal A, G)}$ is a C*-flow with $\mathcal A$ homogeneous, then ${\widehat{\mathcal A}}$ is a $G$-flow. 

  \begin{proof}[Proof of Theorem \ref{subhomogeneous-minimality}]
    If ${(\mathcal A, G)}$ is strongly minimal, then Condition (1) holds as a consequence of Corollary \ref{HomogSubalg}, Condition (2) holds as a consequence of Proposition \ref{SpecMin}, and Condition (3) follows from the definition of strong minimality.
    \medskip

    Now suppose that $\mathcal A$ is homogeneous of degree $n$ and the action of $G$ on ${\widehat{\mathcal A}}$ is minimal, but that ${(\mathcal A, G)}$ is not strongly minimal. It follows from the comments preceding this proof that the action of $G$ on ${\widehat{\mathcal A}}$ is minimal if and only if the action of ${ U(M_n)\times G}$ on ${X_\mathcal A}$ is minimal (this being a valid action because the actions commute). As ${(\mathcal A, G)}$ is not strongly minimal, there is a nontrivial closed left ideal $\mathcal I$ of $\mathcal A$ which is $G$-invariant. Applying Lemma \ref{HomogCanStruc} gives a family ${(p_\pi)_{\pi\in X_\mathcal A}}$ of projections in ${M_n}$ such that the map ${\pi\mapsto p_\pi}$ is both ${ U(M_n)}$-equivariant and $G$-invariant. For ${k=0,1,\dots, n}$ let ${U_k}$ and ${F_k}$ be as in the statement of Lemma \ref{OpCloCon}. An easy computation shows that both ${U_k}$ and ${F_k}$ are ${ U(M_n)\times G}$-invariant (as ${\pi\mapsto p_\pi}$ is both ${ U(M_n)}$-equivariant and $G$-invariant), so each ${U_k}$ is either equal to $\emptyset$ or ${X_\mathcal A}$, since ${U_k}$ is open. Note that there is some ${0<k<n}$ with ${F_k\neq \emptyset}$ (otherwise $\mathcal I$ is trivial), so for this choice of $k$, ${U_k\neq \emptyset}$ and thus ${U_k=X_\mathcal A}$, and now as ${F_k}$ is a closed ${ U(M_n)\times G}$-invariant subset of ${U_k=X_\mathcal A}$ it follows that ${F_k=X_\mathcal A}$. As such, ${\pi\mapsto p_\pi}$ is continuous by the third condition in Lemma \ref{OpCloCon} and so the ${ U(M_n)}$-equivariant ${p\in C(X_\mathcal A, M_n)}$ given by ${p(\pi)=p_\pi}$ is a nontrivial $G$-invariant projection in ${\Phi_\mathcal A(\mathcal A)\cong \mathcal A}$ (which has constant rank equal to $k$).
  \end{proof}

  \begin{remark}\label{subhomogeneous-remark}
      It is clear from the proof of Theorem \ref{subhomogeneous-minimality} that Condition (3) in Theorem \ref{subhomogeneous-minimality} can be replaced with the weaker condition that there is no nontrivial $G$-invariant projection in ${C(X_\mathcal A, M_n)}$ which is ${ U(M_n)}$-equivariant and has constant rank.
  \end{remark}

  A particular situation to highlight for applications of Theorem \ref{subhomogeneous-minimality} is when ${\mathcal A=C(X,M_n)}$, where $X$ is a compact Hausdorff space and ${n\in\Nb}$. Note that any automorphism of $\alpha$ of ${C(X, M_n)}$ fixes the centre ${Z(C(X,M_n))=C(X)}$, so gives rise to a natural homeomorphism of $X$: this implies that if ${(C(X,M_n), G)}$ is a C*-dynamical system then there is a naturally induced action of $G$ on $X$.

  \begin{cor}\label{MinHomogSimp}
      Let $X$ be a compact Hausdorff space and ${n\in\Nb}$. If ${(C(X,M_n), G)}$ is a C*-flow, then ${(C(X,M_n), G)}$ is strongly minimal if and only if the following conditions hold:

      \begin{enumerate}
          \item the induced action of $G$ on $X$ is minimal; and
          \item there is no nontrivial $G$-invariant projection in ${C(X,M_n)}$ of constant rank.
      \end{enumerate}
  \end{cor}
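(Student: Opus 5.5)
This corollary should follow by specializing Theorem \ref{subhomogeneous-minimality} together with Remark \ref{subhomogeneous-remark} to $\A=C(X,M_n)$. Every irreducible representation of $C(X,M_n)$ is unitarily equivalent to an evaluation $\mathrm{ev}_x$ followed by the identity of $M_n$. Hence $C(X,M_n)$ is homogeneous of degree $n$, so Condition (1) of Theorem \ref{subhomogeneous-minimality} holds automatically. The map $x\mapsto[\mathrm{ev}_x]$ is a homeomorphism $X\to\widehat{\A}$, by \cite[Proposition 3.6.4]{dixmier77} and compactness.

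The first step is to check that this homeomorphism is $G$-equivariant for the induced action on $X$ coming from the centre. Let $s\in G$ and $z\in C(X)=Z(\A)$, and write $\sigma_s$ for the induced homeomorphism, so that $sz=z\circ\sigma_s^{-1}$. The kernel of $s\,\mathrm{ev}_x=\mathrm{ev}_x\circ\alpha(s^{-1})$ is $s(\ker\mathrm{ev}_x)$. Intersecting with the centre gives $s(\{z:z(x)=0\})=\{z:z(\sigma_s x)=0\}$. Since primitive ideals of $C(X,M_n)$ are determined by their intersection with the centre, we get $s[\mathrm{ev}_x]=[\mathrm{ev}_{\sigma_sx}]$. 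It follows that minimality of $(\widehat{\A},G)$ is equivalent to Condition (1) of the corollary.

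For the forward direction, strong minimality gives minimality of $(\widehat{\A},G)$ by Theorem \ref{subhomogeneous-minimality}. It also gives the absence of nontrivial $G$-invariant projections, in particular those of constant rank, since such a projection would generate the nontrivial $G$-invariant left ideal $\A p$.

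The converse direction is the main point. By Remark \ref{subhomogeneous-remark}, it suffices to rule out nontrivial $G$-invariant, $U(M_n)$-equivariant, constant-rank projections in $C(X_\A,M_n)$ lying in the canonical representation. Such a projection corresponds under $\Phi_\A$ to a $G$-invariant projection in $\A$. That projection has constant rank, because rank is preserved under unitary conjugation and $X_\A$ consists of the maps $\pi=u\,\mathrm{ev}_x(\cdot)u^*$. Condition (2) then forbids it.

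The obstacle I anticipate is that the proof of Theorem \ref{subhomogeneous-minimality} produces a projection which is actually an element of $\Phi_\A(\A)$. One has to confirm that ``constant rank'' in the remark translates to constant rank of the corresponding element of $C(X,M_n)$. This holds because $\Phi_\A(p)(u\,\mathrm{ev}_x u^*)=u\,p(x)u^*$, so the two ranks agree pointwise.
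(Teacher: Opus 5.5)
Your proposal is correct and follows essentially the same route as the paper's proof. Both specialize Theorem~\ref{subhomogeneous-minimality} through Remark~\ref{subhomogeneous-remark}, identify $\widehat{\A}$ with $X$, and use $\Phi_\A(p)(u\,\mathrm{ev}_x(\cdot)\,u^*)=u\,p(x)\,u^*$ to match constant rank. Your check via the centre that the identification $X\cong\widehat{\A}$ is $G$-equivariant supplies a detail the paper simply calls ``natural''.
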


  \begin{proof}
      It is clear that if ${(C(X,M_n), G)}$ is strongly minimal then the action of $G$ on $X$ is minimal and that there are no nontrivial $G$-invariant projections in ${C(X,M_n)}$. Conversely, assume that Conditions (1) and (2) hold. Condition (1) in Theorem \ref{subhomogeneous-minimality} holds because ${C(X,M_n)}$ is homogeneous of degree $n$. As the spectrum of ${C(X,M_n)}$ can be naturally identified with $X$, Condition (1) obviously implies Condition (2) of Theorem \ref{subhomogeneous-minimality}. Condition (3) is an easy consequence of Remark \ref{subhomogeneous-remark}: if ${\mathcal A=C(X,M_n)}$, then it is easy to verify that ${X_\mathcal A\cong X\times  U(M_n)/\mathbb T}$ and that this turns ${\Phi_\mathcal A:\mathcal A\to C(X_\mathcal A, M_n)}$ into the map ${(\Phi_\mathcal A(f))(x,u\mathbb T)=uf(x)u^*}$, and this map obviously preserves elements of constant rank.
  \end{proof}

  Corollary \ref{MinHomogSimp} gives a method for obtaining disjoint C*-flows in the sense described in Remark \ref{nc-disjointness-remark} with one C*-flow commutative and one C*-flow noncommutative as ${C(X)\otimes M_n\cong C(X, M_n)}$. A classical theorem of topological dynamics is that minimal proximal flows and minimal distal flows are disjoint (see \cite[Corollary II.1.3.]{glasner76}). In the noncommutative setting, there do not seem to be good notions of proximality and distality, so it is difficult to come up with a noncommutative generalization of this result. If one interprets ${M_n}$ as a noncommutative finite set and notes that any action on a finite set is distal, then the next example gives a small generalization of this phenomenon to the setting where the distal action is on a noncommutative C*-algebra.

  \begin{example}\label{proximal-example}
      Let $G$ be a discrete group. The universal minimal proximal flow ${\partial_p G}$ is the minimal proximal flow which has every other minimal proximal flow as a factor, and is unique up to an isomorphism of $G$-flows (in particular, $\partial_F G$ is a factor of $\partial_p G$). Let ${\alpha:G\to \Aut(M_n)}$ be any action of $G$ on ${M_n}$ for which ${(M_n, G)}$ is strongly minimal. If ${(C(\partial_p G, M_n), G)}$ is the C*-flow with action given by 
      \eqnsp{(sf)(x)=\alpha(s)(f(s^{-1}x))}
      
      \noindent then ${(C(\partial_p G, M_n), G)}$ is strongly minimal. Indeed, the action of $G$ on ${\partial_p G}$ is minimal, so by Corollary \ref{MinHomogSimp} it suffices to show that there are no $G$-invariant elements in ${C(\partial_p G, M_n)}$ which are not scalar multiples of the identity. Suppose therefore that ${f\in C(\partial_p G, M_n)}$ is $G$-invariant. Observe that if ${x,y\in \partial_pG}$, then there is a net ${(s_i)}$ in $G$ with ${\alpha(s_i)\to \text{id}_{M_n}}$ in point-norm and ${s_i x\to y}$. Indeed, $G$ acts minimally and distally on the compact group ${\overline{\alpha(G)}}$, so by \cite[Corollary II.1.3]{glasner76} it follows that there is a net ${(s_i)}$ in $G$ with ${(\alpha(s_i), s_ix)=s_i(\text{id}_{M_n}, x)\to (\text{id}_{M_n}, y)}$. This implies
      \eqnsp{f(x)=\lim_i (s_i^{-1}f)(x)=\lim_i\alpha(s_i)^{-1}(f(s_ix))=f(y)}

      \noindent It follows that $f$ is constant. Fix ${x\in \partial_p G}$. For any ${s\in G}$ it is immediate that ${f(x)=\alpha(s)(f(x))}$, so ${f(x)}$ is fixed by ${\alpha(G)}$, and thus by strong minimality of ${(M_n, G)}$ it follows that ${f(x)}$ is a scalar multiple of the identity in ${M_n}$. Therefore, $f$ is a scalar multiple of the identity in ${C(\partial_p G, M_n)}$ and thus ${(C(\partial_p G, M_n), G)}$ is strongly minimal as claimed. When $G$ is discrete and C*-simple, Proposition \ref{strongly-minimal-crossed-product-char} combined with Theorem \ref{c*-irreducibility-char} shows that the inclusion
      \eqnsp{C_\lambda^* (G)\subseteq C(\partial_p G, M_n)\rtimes_\lambda G}

      \noindent is C*-irreducible. By using Proposition \ref{strong-minimality-hereditary}, ${\partial_p G}$ can be replaced by any minimal proximal $G$-flow. This yields many examples of C*-irreducible inclusions: for instance, when ${G=\mathbb F_2}$ the Gromov boundary ${\partial \mathbb F_2}$ can be used, and when ${G=PSL(n,\mathbb Z)}$ for ${n\geq 3}$, then ${\mathbb R\mathbb P^{n-1}}$ or ${PSL(n,\mathbb R)/Q}$ can be used, where $Q$ is the subgroup of ${PSL(n, \mathbb R)}$ consisting of upper triangular matrices.
  \end{example}

  \begin{example}
We reuse the notation from Example \ref{ExRot} and further assume that $\theta$ is rational. In this example it will be shown that ${(\mathcal A_\theta, G)}$ is strongly minimal as an application of Theorem \ref{subhomogeneous-minimality}. Firstly, as $\theta$ is rational, it is well known that ${\mathcal A_\theta}$ is homogeneous of degree $N$, where $N$ is the least positive integer $n$ with ${\omega^n=1}$. Next it will be shown that ${\mathcal A_\theta^G=\mathbb C}$. Indeed, let ${x\in \mathcal A_\theta^G}$, and let ${x\sim \sum_{n=-\infty }^\infty x_n\lambda_n}$ denote the Fourier series of $x$. As
      \eqnsp{x=\gamma_\xi(x)\sim\sum_{n=-\infty}^\infty \gamma_\xi(x_n)\lambda_n}

      \noindent it follows that ${x_n=\gamma_\xi(x_n)}$, so as the $\mathbb Z$ action of ${\gamma_\xi}$ on ${C(\mathbb T)}$ is minimal it follows that ${x_n\in \mathbb C}$. As $\mathbb Z$ is amenable, this implies that ${x\in C_\lambda^* (\mathbb Z)}$. As ${\beta(x)=x}$ it follows that ${x\in C_\lambda^*(\mathbb Z)\cap C(\mathbb T)=\mathbb C}$. Now it will be shown that ${G\curvearrowright \widehat{\mathcal A_\theta}}$ is minimal. For the most part this is an exercise in computing the spectrum of ${\mathcal A_\theta}$. By the Dauns-Hofmann theorem, ${C(\widehat{A_\theta})\cong Z(\mathcal A_\theta)}$ (as ${\mathcal A_\theta}$ is type I, ${\Prim(\mathcal A_\theta)=\widehat{\mathcal A_\theta}}$), so to determine the spectrum of ${\mathcal A_\theta}$ it suffices to compute the centre of ${\mathcal A_\theta}$ (noting that the isomorphism provided by the Dauns-Hofmann theorem is also equivariant with respect to automorphisms). This can be done by noting that ${Z(\mathcal A_\theta)=C^*(u^N, v^N)\cong C(\mathbb T\times \mathbb T)}$, where the isomorphism is determined by mapping ${u^N}$ to ${\text{id}_\mathbb T\otimes 1}$ and mapping ${v^N}$ to ${1\otimes \text{id}_\mathbb T}$. This shows that the action of $G$ on ${\widehat{\mathcal A_\theta}}$ is transported to ${\mathbb T\times \mathbb T}$ in the following way: $\beta$ maps ${(z,z')}$ to ${(\overline{z'}, z)}$ and ${\gamma_\xi}$ maps ${(z,z')}$ to ${(\xi^{-N}z, z')}$. As ${\{(\xi^N)^n:n\in \mathbb Z\}}$ is dense in $\mathbb T$ it follows that this action is minimal. Thus ${(\mathcal A_\theta, G)}$ is strongly minimal.  
  \end{example}
  
\section{The non-unital setting}\label{non-unital-section}

In this section we examine the extent to which some of the results in the previous section hold when the underlying C*-algebra in the C*-dynamical system is not assumed to be unital. Many results from the previous section hold with out much extra effort and we indicate which results those are here.
\medskip

Unlike the case where the underlying C*-algebra is unital, in this section we will refer to ${(\A, G)}$ where $\A$ is potentially non-unital as a C*-dynamical system instead of a C*-flow (continuing the analogy that in the classical setting the term flow is reserved for compact dynamical systems).

\subsection{Strong minimality characterizations}

The goal of this section is to investigate to what extent Theorem \ref{strong-minimality-char} holds. The first thing to note is that in the non-unital setting the connection to the Furstenberg boundary disappears in general when the condition that the map be unital is replaced with the map being nonzero.

\begin{example}
    Let $G$ be any infinite group and let ${\A=c_0(G\times G)}$. When equipped with the diagonal action by left translation ${(\A, G)}$ is a C*-dynamical system which is not (strongly) minimal. However, there is no nonzero $G$-equivariant positive map ${\A\to C(\partial_F G)}$. Indeed, it is easy to check using minimality of ${\partial_F G}$ that there is no nonzero $G$-equivariant positive map ${c_0(G)\to C(\partial_F G)}$, so it follows that any positive map ${c_0(G\times G)\to C(\partial_F G)}$ must also be zero.
\end{example}

Despite ${C(\partial_F G)}$ not playing the same role there is still a single C*-flow which can be used to check for strong minimality, namely ${\ell^\infty (G)}$. The key to ${\ell^\infty(G)}$ working as a replacement for ${C(\partial_F G)}$ in the non-unital setting are the Poisson maps, namely Proposition \ref{strong-minimality-poisson-map-char} still holds in this setting.

\begin{prop}\label{non-unital-strong-minimality-poisson-map-char}
    A C*-dynamical system ${(\mathcal A, G)}$ is strongly minimal if and only if the Poisson map ${\mathcal P_\psi:\mathcal A\to \ell^\infty(G)}$ is faithful for every pure state in $\psi$.
\end{prop}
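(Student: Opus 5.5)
We prove the two implications separately, following the unital argument of Proposition \ref{strong-minimality-poisson-map-char}. The faithfulness argument goes through verbatim. The separation step must be redone, because we no longer have a unit or the unital correspondences of Section \ref{nc-topology-subsection}.

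\emph{Forward direction.} Suppose ${(\A,G)}$ is strongly minimal and let ${\psi\in P(\A)}$. Since ${\P_\psi}$ is a $G$-equivariant positive linear map, its left kernel
\[
\N_{\P_\psi}=\{x\in\A:\P_\psi(x^*x)=0\}
\]
is a closed $G$-invariant left ideal of $\A$. By strong minimality it is either $\{0\}$ or $\A$. It cannot be $\A$: since $\psi$ is a state, ${\P_\psi(x^*x)(e)=\psi(x^*x)}$ is nonzero for some ${x\in\A}$. Hence ${\N_{\P_\psi}=\{0\}}$, and ${\P_\psi}$ is faithful. This uses only the definition and no unit.

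\emph{Reverse direction.} Suppose every ${\P_\psi}$ with ${\psi\in P(\A)}$ is faithful, and let ${\I\subsetneq\A}$ be a proper closed $G$-invariant left ideal. The computation in the proof of Proposition \ref{strong-minimality-poisson-map-char} shows that if ${\I\subseteq\N_\psi}$, then ${\P_\psi(x^*x)=0}$ for all ${x\in\I}$, and hence ${\I=\{0\}}$. So the whole argument reduces to one fact: every proper closed left ideal $\I$ of a possibly non-unital $\A$ is contained in ${\N_\psi}$ for some pure state $\psi$ of $\A$.

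This separation is the main obstacle. In the non-unital case the state space is not weak-* compact, so we cannot simply take an extreme point of the face of states annihilating $\I$.

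\emph{Plan for the separation step.} We use the standard non-unital correspondence: a closed left ideal is the intersection of the left kernels of the pure states (or zero) that vanish on it. Concretely, take the quasi-state space ${Q(\A)}$, the positive functionals of norm at most $1$. It is weak-* compact and convex, and its extreme points are $0$ together with the pure states. The correspondence
\[
\I=\bigcap\{\N_\varphi:\varphi\in Q(\A),\ \I\subseteq\N_\varphi\}
\]
holds for arbitrary C*-algebras (see \cite{pedersen18}).

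The set
\[
F=\{\varphi\in Q(\A):\I\subseteq\N_\varphi\}
\]
is a weak-* closed face of ${Q(\A)}$ containing $0$. Since $\I$ is proper, $F$ contains some nonzero $\varphi$; otherwise the displayed intersection would be ${\N_0=\A}$. By Krein--Milman, $F$ is the closed convex hull of its extreme points. These extreme points are extreme in ${Q(\A)}$ because $F$ is a face, so they are $0$ or pure states. If $0$ were the only extreme point, then $F$ would be ${\{0\}}$, a contradiction. Hence some pure state $\psi$ satisfies ${\I\subseteq\N_\psi}$.

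An alternative route is to pass to the unitization ${\widetilde\A}$, which carries the induced $G$-action. There one would check that ${\I+\C\,1}$ fails to contain $1$ in a suitable sense and extend pure states. The quasi-state argument above looks cleaner, so we use it.
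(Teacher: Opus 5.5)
Your proof is correct and is essentially the paper's argument: the paper simply asserts that the unital proof of Proposition \ref{strong-minimality-poisson-map-char} carries over verbatim, and your forward direction is the left-kernel argument from Proposition \ref{strong-minimality-extrinsic-char}. Your quasi-state-space and Krein--Milman argument, producing a pure state $\psi$ with $\I\subseteq\N_\psi$, fills in the separation step that the paper leaves implicit; in the unital case that step came from the correspondence with weak-* closed faces of $S(\A)$.
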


\begin{proof}
    The proof of Proposition \ref{strong-minimality-poisson-map-char} does not use the unit anywhere.
\end{proof}

An easy consequence of Proposition \ref{non-unital-strong-minimality-poisson-map-char} is that once the appropriate modifications are made Proposition \ref{strong-minimality-extrinsic-char} holds in the non-unital setting.

\begin{cor}\label{non-unital-strong-minimality-extrinsic-char}
    Let ${(\A,G)}$ be a C*-dynamical system. The following are equivalent: 
\begin{enumerate}
\item ${(\A,G)}$ is strongly minimal. 
\item Every $G$-equivariant positive linear map ${\Phi:\A\to\B}$ is either faithful or identically zero, for every C*-dynamical system ${(\B,G)}$. 
\item Every $G$-equivariant completely positive linear map ${\Phi:\A\to\B}$ is faithful, for every C*-dynamical system ${(\B,G)}$. 
\item Every $G$-equivariant positive linear map ${\Phi:\A\to \ell^\infty(G)}$ is faithful. 
\end{enumerate}
\end{cor}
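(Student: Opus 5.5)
Before starting, I would fix how the conditions are read. In the non-unital setting there is no unital map to fall back on, so Conditions $(3)$ and $(4)$ have to be understood as in Condition $(2)$: every such map is faithful \emph{or identically zero}. The zero map is always $G$-equivariant and completely positive, so this is the only sensible reading. With that convention, the plan is the same chain $(1)\Rightarrow(2)\Rightarrow(3)\Rightarrow(4)\Rightarrow(1)$ as in the proof of Proposition \ref{strong-minimality-extrinsic-char}. The step $(4)\Rightarrow(1)$ routes through Proposition \ref{non-unital-strong-minimality-poisson-map-char} instead of through Lemma \ref{vanishing-equivariant-ucp}, which needs the unit and the Furstenberg boundary.

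For $(1)\Rightarrow(2)$, let ${\Phi:\A\to\B}$ be $G$-equivariant and positive. Its left kernel ${\N_\Phi}$ is a closed left ideal; the argument for this is the Kadison-type inequality ${\Phi(y^*x^*xy)\le\norm{x}^2\Phi(y^*y)}$, which needs no unit. Equivariance gives ${s\msp\N_\Phi=\N_\Phi}$. Strong minimality then leaves two cases:
\begin{enumerate}
\item if ${\N_\Phi=\{0\}}$, then $\Phi$ is faithful;
\item if ${\N_\Phi=\A}$, then ${\Phi(x^*x)=0}$ for all $x$, so $\Phi$ vanishes on ${\A_+}$.
\end{enumerate}
In the second case, since every element is a linear combination of four positive elements, ${\Phi=0}$.

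The implication $(2)\Rightarrow(3)$ is immediate, because completely positive maps are positive. For $(3)\Rightarrow(4)$, take ${\B=\ell^\infty(G)}$ with left translation. A positive map into a commutative C*-algebra is automatically completely positive, so $(3)$ applies to every $G$-equivariant positive map ${\Phi:\A\to\ell^\infty(G)}$.

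For $(4)\Rightarrow(1)$, fix a pure state ${\psi\in P(\A)}$. The Poisson map ${\P_\psi}$ is $G$-equivariant and positive. It is nonzero, since ${\P_\psi(x)(e)=\psi(x)}$ and ${\psi\neq0}$. Hence ${\P_\psi}$ is faithful by $(4)$, and Proposition \ref{non-unital-strong-minimality-poisson-map-char} gives strong minimality. There is no real obstacle beyond this bookkeeping. The only points needing care are that the left kernel is a closed left ideal without a unit, and that ${\N_\Phi=\A}$ forces ${\Phi=0}$; both are standard.
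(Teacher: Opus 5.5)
Your proof is correct and is essentially the paper's argument. The paper only remarks that the corollary follows from Proposition \ref{non-unital-strong-minimality-poisson-map-char} by adapting the chain of implications in Proposition \ref{strong-minimality-extrinsic-char}, and you carry this out, using the Poisson maps in place of Lemma \ref{vanishing-equivariant-ucp} for $(4)\Rightarrow(1)$ and invoking only the direction of that proposition that does not itself depend on the present corollary. Your reading of Conditions $(3)$ and $(4)$ as statements about nonzero maps is the intended one, as the summary theorem at the end of that subsection confirms.
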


Proposition \ref{strong-minimality-hereditary-minimality-char} holds in the non-unital setting without any modification.

\begin{prop}
    A C*-dynamical system ${(\A,G)}$ is strongly minimal if and only if every $G$-invariant C*-subalgebra ${\B\subseteq\A}$ is minimal.
\end{prop}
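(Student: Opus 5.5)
The forward direction is immediate. If $(\A,G)$ is strongly minimal and $\B\subseteq\A$ is a $G$-invariant C*-subalgebra, then I will show directly that $(\B,G)$ is strongly minimal, and hence minimal. I will use Proposition \ref{non-unital-strong-minimality-poisson-map-char} rather than $G$-injectivity of $C(\partial_FG)$, since that tool is no longer available. Let $\psi$ be a pure state on $\B$. Extend it to a pure state $\tilde\psi$ on $\A$; this is possible for a C*-subalgebra, by taking an extreme point of the weak-* compact face of extensions. Then $\P_\psi=\P_{\tilde\psi}\rvert_\B$. By Proposition \ref{non-unital-strong-minimality-poisson-map-char} applied to $\A$, the map $\P_{\tilde\psi}$ is faithful, so $\P_\psi$ is faithful too. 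Applying the same proposition to $\B$ shows that $(\B,G)$ is strongly minimal. Minimality then follows because every closed two-sided ideal is a closed left ideal.

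For the converse, I follow the unital argument via multiplicative domains. Suppose every $G$-invariant C*-subalgebra is minimal but $(\A,G)$ is not strongly minimal. Proposition \ref{non-unital-strong-minimality-poisson-map-char} then gives a pure state $\psi$ with $\P_\psi$ not faithful. The map $\P_\psi:\A\to\ell^\infty(G)$ is a $G$-equivariant contractive completely positive map, and it is nonzero because $\psi\neq0$. Let $\D$ be its multiplicative domain. Equivariance of $\P_\psi$ under automorphisms gives that $\D$ is a $G$-invariant C*-subalgebra of $\A$. By Lemma \ref{completely-positive-map-faithful-domain}, whose proof never uses a unit, $\P_\psi\rvert_\D$ is not faithful. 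Hence $\ker(\P_\psi\rvert_\D)$ is a nonzero closed two-sided ideal of $\D$, and it is $G$-invariant. Minimality of $\D$ forces this kernel to be all of $\D$, so $\P_\psi\rvert_\D=0$.

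The main obstacle is ruling out this degenerate case, which cannot occur in the unital setting because there $1\in\D$. The plan for handling it is as follows. Take a nonzero $x\in\A_+$ with $\P_\psi(x)=0$. As in the proof of Lemma \ref{completely-positive-map-faithful-domain}, Kadison--Schwarz gives $x\in\D$, so $x$ lies in the nonzero $G$-invariant algebra $\D$. The trouble is that $\P_\psi$ vanishes on all of $\D$. To fix this, I will instead apply the hypothesis to a smaller, cleverly chosen subalgebra. The candidate is the hereditary-type subalgebra built from the left kernel: $\N_{\P_\psi}^*\cap\N_{\P_\psi}$. This is a $G$-invariant C*-subalgebra, since $\N_{\P_\psi}=\bigcap_s s\N_\psi$ is a $G$-invariant closed left ideal. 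I will also consider $\A$ itself together with its ideal structure. The idea is to take $\B=\A$ and use the $G$-invariant closed left ideal $L=\N_{\P_\psi}$. Then $L^*L$ generates a $G$-invariant two-sided ideal $\overline{\A L}$ inside the C*-subalgebra $C^*(L)$. I would try to show that $L$ is a proper nonzero two-sided ideal of the $G$-invariant C*-subalgebra $\B=L+L^*+\overline{L^*L}$, or of a suitable algebra containing an element on which $\psi$ is nonzero. That would contradict minimality of $\B$. Arranging for this $\B$ to be an algebra that strictly contains $L$ is the delicate step. If the direct construction fails, my fallback is to unitize: pass to $\tilde\A$ with the extended action, and show that the $G$-invariant subalgebras of $\tilde\A$ are either subalgebras of $\A$ or unitizations of them. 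Minimality of such unitizations is then controlled by minimality in $\A$ together with the single extra ideal $\A$ itself. After that, I would invoke the unital Proposition \ref{strong-minimality-hereditary-minimality-char} with care.
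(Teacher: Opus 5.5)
Your forward direction is correct, and it is a clean alternative to the paper's appeal to the non-unital extrinsic characterization: extend a pure state of $\B$ to a pure state of $\A$ and restrict the Poisson map.

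The converse, however, is not proved. Your opening is exactly the paper's argument: the multiplicative domain $\D$ of $\P_\psi$ is a $G$-invariant C*-subalgebra containing the positive part of the left kernel, and minimality of $\D$ only gives that $\P_\psi\rvert_\D$ is faithful or zero. In the unital proof the zero case is excluded because $1\in\D$ and $\Phi(1)=1$. Here nothing excludes it, and it genuinely occurs.

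Take $\A=K(L^2(\Tb))$, with $n\in\Zb$ acting by conjugation by multiplication by $z^n$ (the Fourier picture of $\mathrm{Ad}\,\lambda$). Let $\psi=\omega_\xi$, where $\xi$ is the normalized indicator function of a half-circle $I$. If $y\in\D$, then every $z^n\xi$ is an eigenvector of $y$ and of $y^*$. A trigonometric polynomial vanishing on an arc is zero, so $\{z^n\xi\}$ remains complete in $L^2(I)$ after deleting finitely many terms. A Riesz-projection argument then forces all the eigenvalues to be $0$. Hence $\D=K(L^2(\Tb\setminus I))$, a simple invariant subalgebra annihilated by $\P_\psi$, even though $\P_\psi$ is not faithful. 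The proposition itself is not contradicted, since $K(L^2(I))\oplus K(L^2(\Tb\setminus I))$ is a non-minimal invariant subalgebra, but the multiplicative domain does not detect this.

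The paper's proof of this direction only asserts that the unital multiplicative-domain reasoning carries over. It does not say how the zero case is excluded, so the missing step is not supplied there either.

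None of your proposed remedies closes the gap.
\begin{itemize}
\item $L+L^*+\overline{L^*L}$ is not an algebra: for $L=M_2e_{11}$ one has $e_{21}e_{12}=e_{22}$. More fundamentally, $L$ can be a two-sided ideal of a subalgebra containing $L^*$ only if $LL^*\subseteq L$. Since $LL^*$ is self-adjoint and $L\cap L^*$ contains a right approximate unit for $L$, this gives $L\subseteq\overline{\Span(LL^*)}\subseteq L\cap L^*$. So $L=L^*$ would be a proper nonzero $G$-invariant two-sided ideal of $\A$. Minimality of $\A$ already rules that out, so such an algebra can never be the witness.
\item The unitization fallback is circular. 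The unital $G$-invariant subalgebras of $\tilde\A$ are exactly $\B+\C 1$ with $\B\subseteq\A$ invariant, and $\B$ is a nontrivial invariant ideal of $\B+\C 1$ whenever $\B\neq0$. So Proposition~\ref{strong-minimality-hereditary-minimality-char} cannot be applied to $\tilde\A$, whose strong minimality is false anyway. Concretely, the unitized Poisson map has multiplicative domain $\D+\C 1$ and kernel exactly $\D$ there, which is the same degenerate situation.
\end{itemize}

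What you actually need is a nonzero $G$-invariant C*-subalgebra that has a nonzero invariant ideal inside $L=\N_{\P_\psi}$ without being contained in $L$. Let $p$ be the open projection of $L$.
\begin{itemize}
\item If $\bar p\neq1$, the orthogonal sum $(p\A^{**}p\cap\A)+((1-\bar p)\A^{**}(1-\bar p)\cap\A)$ does the job; this handles the example above.
\item If $p$ is dense, the role played by $\C 1+(p\A^{**}p\cap\A)$ in the unital proof would have to be taken by something like $\{a\in\A:ap=pa\}$. That algebra contains $p\A^{**}p\cap\A$ as an invariant ideal, but you would still have to show it is strictly larger. Your outline gives no argument for this.
\end{itemize}
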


\begin{proof}
    It is clear by using Corollary \ref{non-unital-strong-minimality-extrinsic-char} that if ${(\A, G)}$ is strongly minimal and ${\B\subseteq \A}$ is a $G$-invariant C*-subalgebra then ${(\B, G)}$ is strongly minimal, and hence minimal. Conversely, Lemma \ref{completely-positive-map-faithful-domain} holds when the condition that the map be unital and completely positive is replaced with the condition that it is a contractive completely positive map (with the same proof), so the same reasoning as in the proof of Proposition \ref{strong-minimality-hereditary-minimality-char} works here.
\end{proof}

Finally, Proposition \ref{strong-minimality-local-char} partially holds in the non-unital setting.

\begin{prop}
    If ${(\A, G)}$ is a C*-dynamical system then ${(\A, G)}$ is strongly minimal if and only if for every nonzero open projection ${e\in \A^{**}}$ and compact projection ${f\in\A^{**}}$ there are ${s_1,\dots, s_n\in G}$ with ${f\leq \smash{\bigvee_{j\,=\,1}^n s_j\msp e}}$.
\end{prop}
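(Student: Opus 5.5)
We prove the two implications separately, using the noncommutative topology on $\A^{**}$ in the non-unital setting. There, open projections still correspond to closed left ideals, and the correspondences of Section \ref{nc-topology-subsection} preserve $G$-invariance. A projection $f\in\A^{**}$ is \emph{compact} if it is closed and $f\leq x$ for some $x\in\A_+$ with $\norm{x}\leq1$ (equivalently, $f=fx$ for some such $x$).

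\emph{Sufficiency.} Suppose the covering condition holds and let $\I\subseteq\A$ be a nonzero closed $G$-invariant left ideal with open support projection $e$. Then $\bigvee_{s\in G}se$ is a $G$-invariant open projection, and it is the support of $\I$ itself, since $\I$ is $G$-invariant. By hypothesis it dominates every compact projection of $\A^{**}$. For any $x\in\A_+$ and $\eps>0$, the spectral projection $\chi_{[\eps,\infty)}(x)$ is compact, and $\sup_\eps\chi_{[\eps,\infty)}(x)$ is the range projection of $x$. Hence the support of $\I$ dominates the range projection of every element of $\A_+$, so it is $1_{\A^{**}}$. Therefore $\I=\A^{**}1\cap\A=\A$, and $(\A,G)$ is strongly minimal.

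\emph{Necessity.} Suppose $(\A,G)$ is strongly minimal, and let $e$ be a nonzero open projection and $f$ a compact projection. Then $\bigvee_{s\in G}se$ is a nonzero $G$-invariant open projection, so its left ideal is a nonzero $G$-invariant closed left ideal and hence equals $\A$. Thus $\bigvee_{s\in G}se=1_{\A^{**}}$. The family of finite joins $e_F=\bigvee_{s\in F}se$, with $F\subseteq G$ finite, is an upward-directed family of open projections whose supremum is $1\geq f$. It remains to pass from the infinite join to a finite one using compactness of $f$. This is the noncommutative compactness theorem of Akemann: a compact projection covered by the supremum of an increasing net of open projections is covered by one member of the net. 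Cite \cite[Proposition II.10]{akemann69}, or its non-unital form in \cite{akemann69}/\cite{brown88}; this is the same tool used in Remark \ref{open-proj-covers-one}, where $f=1$.

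The main obstacle is justifying this compactness step in the non-unital case. If no reference applies directly, I would argue via a unitization $\widetilde\A$. A compact projection $f$ for $\A$ is closed in $\widetilde\A^{**}$. The open projections $e_F$ of $\A$ are open in $\widetilde\A^{**}$. The closed projection $1-f$ of $\A$ corresponds to an open projection in $\widetilde\A^{**}$, namely $(1-f)+(1_{\widetilde\A^{**}}-1_{\A^{**}})$, and it is here that compactness of $f$ is used. The open projections $e_F\vee$ this one increase to $1_{\widetilde\A^{**}}$. The unital result then yields a finite $F$ with $e_F\vee\big((1-f)+(1_{\widetilde\A^{**}}-1_{\A^{**}})\big)=1_{\widetilde\A^{**}}$. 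From this one must deduce $f\leq e_F$; this step is not automatic for noncommuting projections. I would try Akemann's formulation, which states directly that a closed projection in a unital algebra dominated by the supremum of an increasing net of open projections is dominated by one member. That formulation gives $f\leq e_F$ immediately, bypassing the complement manipulation.
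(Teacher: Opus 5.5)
\textbf{The necessity direction does not go through, and in fact cannot.} Your sufficiency direction is fine. It differs from the paper's only in that you use the compact spectral projections $\chi_{[\eps,\infty)}(x)$ rather than a minimal projection under $1-e$.

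The necessity direction rests on a false lemma, in both forms you state it. Akemann's finite-cover result, as used in Remark \ref{open-proj-covers-one}, concerns only $f=1$ in the unital case. There exactness comes from invertibility: $1\in\overline{\bigcup_F\I_F}$ puts an invertible element in some $\I_F$, and a projection $e_F\geq c1$ must equal $1$. For a general compact $f$ with $fa=f$, approximating $a$ by some $x\in\I_F$ yields only $fe_Ff\geq cf$ for some $c>0$, so $f\wedge e_F^\perp=0$. That is exactly the weakening (rather than $f\leq e_F$) you ran into with the unitization.

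Here is a concrete counterexample to the lemma. For $\A=K(\ell^2)$ we have $\A^{**}=B(\ell^2)$, and every projection is open and closed. The projections $e_n$ onto $\mathrm{span}\{\delta_1,\dots,\delta_n\}$ increase to $1$. Let $f=\xi\xi^*$, where $\xi$ has infinitely many nonzero coordinates; $f$ is a compact projection lying in $\A$ itself, yet $f\not\leq e_n$ for every $n$. The same projections viewed in $\widetilde{\A}$ also refute your ``unital formulation''.

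No argument can close this gap, because the necessity direction of the statement is false. Let $G$ be a countable group with an irreducible unitary representation $\pi$ on an infinite-dimensional separable Hilbert space $\H$ (e.g.\ $G=\Fb_2$), and let it act on $K(\H)$ by $\mathrm{Ad}\,\pi$.
\begin{enumerate}
\item Every closed left ideal of $K(\H)$ has the form $K(\H)p$ for a projection $p\in B(\H)$. Invariance forces $p\in\pi(G)'=\C 1$, so the system is strongly minimal.
\item For $e=\eta\eta^*$, the join $\bigvee_{j=1}^n s_je$ is the projection onto $\mathrm{span}\{\pi(s_1)\eta,\dots,\pi(s_n)\eta\}$.
\item The algebraic span of $\pi(G)\eta$ has countable Hamel dimension, while $\H$ does not. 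So any unit vector $\xi$ outside it gives a compact $f=\xi\xi^*$ that is never covered.
\end{enumerate}
In the unital case the statement is true, but only trivially, since some finite join $\bigvee_j s_je$ already equals $1$.

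The paper's one-line proof, which appeals to Remark \ref{open-proj-covers-one}, has the same defect. What your approximation argument does establish is the weaker covering $f(\bigvee_j s_je)f\geq cf$ for some $c>0$. With that replacement, and the paper's minimal-projection argument for the converse, the characterization becomes correct.
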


\begin{proof}
    The proof follows immediately from the comments in Remark \ref{open-proj-covers-one} and the fact that every nonzero closed projection dominates minimal projection in ${\A^{**}}$ (which is necessarily compact).
\end{proof}

Summarizing the results of this section yield the following non-unital version of Theorem \ref{strong-minimality-char}.

\begin{theorem}
    Let ${(\A,G)}$ be a C*-dynamical system. The following are equivalent: 
\begin{enumerate}
\item The C*-dynamical system ${(\A, G)}$ is strongly minimal. 
\item Every $G$-invariant C*-subalgebra ${\B\subseteq\A}$ is minimal. 
\item Every nonzero $G$-equivariant positive linear map ${\Phi:\A\to \ell^\infty(G)}$ is faithful.
\item Every nonzero open projection ${e\in\A^{**}}$ and compact projection ${f\in \A^{**}}$ admit elements ${s_1,\dots,s_n\in G}$ such that ${\smash{\bigvee_{j\,=\,1}^n s_j\msp e\geq f}}$. 
\item The Poisson map ${\P_\psi:\A\to\ell^\infty(G)}$ is faithful for every pure state ${\psi\in P(\A)}$. 
\end{enumerate}
\end{theorem}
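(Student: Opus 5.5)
The plan is to splice together the non-unital propositions just established, using the Poisson-map characterization (5) as the hub. \emph{Proposition \ref{non-unital-strong-minimality-poisson-map-char}} gives $(1)\Leftrightarrow(5)$ directly. The preceding proposition on $G$-invariant C*-subalgebras gives $(1)\Leftrightarrow(2)$, and the proposition on open and compact projections gives $(1)\Leftrightarrow(4)$. What remains is $(3)$.

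For $(1)\Rightarrow(3)$, let $\Phi:\A\to\ell^\infty(G)$ be a nonzero $G$-equivariant positive map. Its left kernel $\N_\Phi$ is a closed $G$-invariant left ideal; here positivity suffices, via the Kadison--Schwarz-type inequality $\Phi(y^*x^*xy)\le\norm{x}^2\Phi(y^*y)$ for positive maps. By strong minimality, $\N_\Phi$ is either $\{0\}$ or $\A$. The case $\N_\Phi=\A$ forces $\Phi(x^*x)=0$ for all $x$, hence $\Phi=0$ on $\A_+$ and so $\Phi=0$, contradicting that $\Phi$ is nonzero. Thus $\Phi$ is faithful. This is precisely the argument of Corollary \ref{non-unital-strong-minimality-extrinsic-char}, specialized to $\B=\ell^\infty(G)$.

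For $(3)\Rightarrow(5)$, take a pure state $\psi$. The Poisson map $\P_\psi$ is $G$-equivariant, positive, and nonzero, since $\P_\psi(x)(e)=\psi(x)$ and $\psi\neq0$. By $(3)$ it is therefore faithful.

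Combining these, we have $(1)\Leftrightarrow(2)$, $(1)\Leftrightarrow(4)$, $(1)\Leftrightarrow(5)$, and $(1)\Rightarrow(3)\Rightarrow(5)\Rightarrow(1)$, which closes the cycle.

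The only step needing real care is $(1)\Leftrightarrow(4)$, which is imported from the preceding proposition. Its converse direction uses the following facts:
\begin{itemize}
\item A proper $G$-invariant closed left ideal corresponds to a nonzero $G$-invariant closed projection $1-e$.
\item Such a closed projection dominates a minimal, hence compact, projection $f$, namely the support of a pure state vanishing on the ideal.
\item Then $f$ is orthogonal to every $s_j e=e$, which contradicts $f\le\bigvee_j s_j e$.
\end{itemize}
The forward direction uses Remark \ref{open-proj-covers-one}: $\bigvee_s se=1$, and by Akemann's compactness result a compact projection is covered by finitely many $s_j e$. Since these are taken as established, the proof reduces to bookkeeping.
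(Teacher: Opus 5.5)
Your proposal is correct and follows essentially the same route as the paper. The paper also proves this theorem just by splicing together the preceding non-unital results (the Poisson-map proposition, the extrinsic corollary, the subalgebra proposition, and the open/compact projection proposition); your direct argument for $(1)\Rightarrow(3)\Rightarrow(5)$ simply writes out that extrinsic corollary specialized to $\ell^\infty(G)$.
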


\subsection{Subhomogeneous C*-algebras}

In the non-unital setting the natural analogue of Theorem \ref{subhomogeneous-minimality} holds without much modification to the proof.

\begin{theorem}\label{non-unital-subhomogeneous}
    Let ${(\mathcal A, G)}$ be a C*-dynamical system. If $\mathcal A$ is subhomogeneous of degree $n$, then ${(\mathcal A, G)}$ is strongly minimal if and only if the following three conditions hold:

      \begin{enumerate}
          \item $\mathcal A$ is homogeneous;
          \item The action of $G$ on the locally compact space ${\widehat{\mathcal A}}$ is minimal;
          \item There are no nontrivial $G$-invariant projections in ${M(\mathcal A)}$, the multiplier algebra of $\mathcal A$.
      \end{enumerate}
\end{theorem}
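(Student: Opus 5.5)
We follow the proof of Theorem \ref{subhomogeneous-minimality}, making the changes needed for the non-unital setting. First the forward direction. Suppose ${(\A,G)}$ is strongly minimal. By the non-unital analogue of Proposition \ref{strong-minimality-hereditary-minimality-char}, every $G$-invariant C*-subalgebra of $\A$ is minimal. In particular $\A$ is minimal, so ${(\widehat{\A},G)}$ is minimal through the equivariant correspondence between closed two-sided ideals and open subsets of ${\widehat{\A}}$; this gives Condition (2). For Condition (1) we reuse the argument of Corollary \ref{HomogSubalg}. Let $m$ be the least dimension of an irreducible representation; this exists since $\A$ is subhomogeneous of degree $n$. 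The set of classes of dimension at most $m$ is closed by \cite[Proposition 3.6.3]{dixmier77}, nonempty and $G$-invariant, so minimality forces it to be all of ${\widehat{\A}}$. For Condition (3), let ${p\in M(\A)}$ be a $G$-invariant projection. Then ${\A p}$ is a closed $G$-invariant left ideal of $\A$, hence $\{0\}$ or $\A$. If ${\A p=0}$ then ${p=0}$, because $\A$ is essential in ${M(\A)}$. If ${\A p=\A}$ then ${a(1-p)=0}$ for all $a$, so ${p=1}$.

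For the converse, assume $\A$ is homogeneous of degree $n$ and Conditions (2) and (3) hold. We replace the compact space ${X_\A}$ by the locally compact space of irreducible representations ${\A\to M_n}$, equipped with the point-norm topology. By \cite[Proposition 3.6.4]{dixmier77} in its non-unital form, ${\widehat{\A}}$ is the quotient of this space by ${U(M_n)}$. The canonical representation ${\Phi_\A}$ then identifies $\A$ with the ${U(M_n)}$-equivariant functions in ${C_0(X_\A,M_n)}$, and Lemmas \ref{LeftHomog} and \ref{OpCloCon} are already stated for locally compact $X$ and $C_0$. Take a nonzero proper $G$-invariant closed left ideal $\I$. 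Lemma \ref{HomogCanStruc} gives a ${U(M_n)}$- and $G$-equivariant family ${(p_\pi)}$. Since the action of ${U(M_n)\times G}$ on ${X_\A}$ is minimal, each open invariant set $U_k$ is empty or everything. Choose $k$ with ${F_k\neq\emptyset}$. Then ${U_k=X_\A}$, and since ${F_k}$ is relatively closed and invariant in ${U_k}$, also ${F_k=X_\A}$. So ${\pi\mapsto p_\pi}$ is a continuous, bounded, equivariant projection-valued function of constant rank $k$.

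The main obstacle is the last step. The function ${p}$ need not vanish at infinity, so it is not an element of $\A$; we must show it defines a nontrivial $G$-invariant projection in ${M(\A)}$. Our plan is to identify ${M(\A)}$ with the ${U(M_n)}$-equivariant functions in ${C_b(X_\A,M_n)}$ with the strict topology; bounded continuous equivariant functions multiply ${\Phi_\A(\A)}$ into itself, since products of equivariant functions are equivariant. This shows ${p}$ is a multiplier, and $G$-invariance follows from the $G$-equivariance of ${\pi\mapsto p_\pi}$. It remains to rule out $p$ trivial. We must have ${0<k<n}$: rank $0$ everywhere forces ${\I=0}$, and rank $n$ everywhere forces ${\I=\A}$ by Lemma \ref{LeftHomog}. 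Hence ${p\neq0,1}$, contradicting Condition (3). The only remaining check is that the Lemma \ref{LeftHomog} description of $\I$ is compatible with this multiplier picture. It is, since ${\I=\{a\in\A:a\,p=a\}=\A p}$.
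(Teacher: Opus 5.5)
Your proposal is correct and follows the paper's proof. The paper likewise reruns the argument of Theorem \ref{subhomogeneous-minimality} on the locally compact space $X_\A$. In the forward direction it gets Condition (3) from the fact that a projection in $M(\A)$ gives a $G$-invariant closed left ideal (it is open and closed). For the converse it identifies $M(\A)$ with the $U(M_n)$-equivariant functions in $C_b(X_\A,M_n)$, so that the constant-rank projection $\pi\mapsto p_\pi$ becomes a nontrivial $G$-invariant projection in $M(\A)$. Your version only fills in details the paper leaves implicit, such as $\A p$ being a closed left ideal, $0<k<n$, and why bounded equivariant functions multiply $\A$ into itself.
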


\begin{proof}
    Most aspects of the proof go through verbatim given that the lemmas preceding Theorem \ref{subhomogeneous-minimality} are valid without the assumption that the C*-algebras are unital. The only parts that need to be addressed are those that involve the multiplier algebra of $\mathcal A$. If ${(\mathcal A, G)}$ is strongly minimal then ${M(\mathcal A)}$ cannot have any $G$-invariant projections as any projection in ${M(\mathcal A)}$ is both open and closed. If $\mathcal A$ is homogeneous then ${X_\mathcal A}$ is a locally compact Hausdorff space and the multiplier algebra of $\mathcal A$ can be identified with the functions in ${C_b(X_\mathcal A, M_n)}$ which are ${ U(M_n)}$-equivariant. Thus, if $\mathcal A$ is homogeneous and the action of $G$ on ${\widehat{\mathcal A}}$ is minimal then as in the proof of Theorem \ref{subhomogeneous-minimality} the additional assumption that ${(A, G)}$ is not strongly minimal implies that there is a nontrivial $G$-invariant projection in ${C_b(X_\mathcal A, M_n)}$ which is ${ U(M_n)}$-equivariant and has constant rank.
\end{proof}

\begin{remark}
Remark \ref{subhomogeneous-remark} still holds in this setting: condition (3) in Theorem \ref{non-unital-subhomogeneous} can be replaced with the weaker condition that there is no $G$-invaraint projection in ${C_0(X_\A, M_n)}$ which is ${U(M_n)}$-equivariant and has constant rank.
\end{remark}

Corollary \ref{MinHomogSimp} also holds in the non-unital setting with the appropriate modifications.

\begin{cor}\label{non-unital-subhomogeneous-simplification}
    Let $X$ be a locally compact Hausdorff space and ${n\in\Nb}$. If ${(C_0(X,M_n), G)}$ is a C*-dynamical system, then ${(C_0(X,M_n), G)}$ is strongly minimal if and only if the following conditions hold:

      \begin{enumerate}
          \item The induced action of $G$ on $X$ is minimal;
          \item There is no nontrivial $G$-invariant projection in ${C_b(X,M_n)}$ of constant rank.
      \end{enumerate}
\end{cor}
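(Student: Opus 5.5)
The plan is to reduce Corollary \ref{non-unital-subhomogeneous-simplification} to Theorem \ref{non-unital-subhomogeneous} combined with the remark following it, exactly as Corollary \ref{MinHomogSimp} was deduced from Theorem \ref{subhomogeneous-minimality} and Remark \ref{subhomogeneous-remark}. Set $\A=C_0(X,M_n)$. Any automorphism of $\A$ extends to the multiplier algebra $M(\A)=C_b(X,M_n)$ and preserves its centre $C_b(X)$, hence also $C_0(X)=Z(M(\A))\cap\A$. This induces the action of $G$ on $X$, and the identification $\widehat{\A}\cong X$ is $G$-equivariant.

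For the forward direction, assume $(\A,G)$ is strongly minimal. Then Theorem \ref{non-unital-subhomogeneous} shows that the action on $\widehat{\A}\cong X$ is minimal, which is condition (1). It also shows that $M(\A)=C_b(X,M_n)$ has no nontrivial $G$-invariant projections at all, and in particular none of constant rank, which is condition (2). One could also argue directly: a $G$-invariant closed subset $F\subseteq X$ yields the $G$-invariant ideal $C_0(X\setminus F,M_n)$, and a projection $p\in C_b(X,M_n)$ is open, so $\A^{**}p\cap\A$ is a $G$-invariant closed left ideal.

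For the converse, assume (1) and (2). Since $\A$ is homogeneous of degree $n$, conditions (1) and (2) of Theorem \ref{non-unital-subhomogeneous} hold. By the remark after that theorem, it then suffices to rule out a nontrivial $G$-invariant, $U(M_n)$-equivariant projection $P$ on $X_\A$ of constant rank. To do this, identify $X_\A$ with $X\times U(M_n)/\Tb$, which is locally compact here. Under this identification the canonical representation becomes $(\Phi_\A(f))(x,u\Tb)=uf(x)u^*$, and $U(M_n)$-equivariant bounded continuous functions correspond exactly to elements of $C_b(X,M_n)$, via $p(x)=P(x,\Tb)$. This correspondence preserves rank pointwise, so $p$ has constant rank. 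It preserves the $G$-action, since $\Phi_\A$ is $G$-equivariant and extends to multipliers. It also preserves being a projection and being nontrivial. Condition (2) then excludes $P$, and Theorem \ref{non-unital-subhomogeneous} yields strong minimality.

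The main obstacle is justifying the identification $X_\A\cong X\times U(M_n)/\Tb$ in the non-unital case. One has to check that the nonzero $*$-homomorphisms $C_0(X,M_n)\to M_n$ are precisely $f\mapsto uf(x)u^*$, and that the product (point-norm) topology on them matches the product topology. The first part follows from the classification of irreducible representations of $C_0(X)\otimes M_n$. For the topology, evaluate against elements $g\otimes e_{ij}$ with $g(x)\neq 0$ to recover $x$ and $u\Tb$ continuously. After this, one must check that the multiplier algebra identification of Theorem \ref{non-unital-subhomogeneous}, namely as the $U(M_n)$-equivariant part of $C_b(X_\A,M_n)$, restricts correctly under this homeomorphism.
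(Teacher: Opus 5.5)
Your proposal is correct and follows the paper's argument. The paper gives no separate proof here: it reruns the proof of Corollary \ref{MinHomogSimp}, using Theorem \ref{non-unital-subhomogeneous} and the remark after it in place of their unital versions, and uses $X_\A\cong X\times U(M_n)/\Tb$ with $(\Phi_\A(f))(x,u\Tb)=uf(x)u^*$ to see that rank is preserved. The checks you flag are exactly the ``appropriate modifications'' the paper leaves implicit: identifying $X_\A$ and its topology in the non-unital case, matching the $U(M_n)$-equivariant part of $C_b(X_\A,M_n)$ with $C_b(X,M_n)=M(\A)$ $G$-equivariantly, and reading the $C_0(X_\A,M_n)$ in that remark as $C_b$.
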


Using Corollary \ref{non-unital-subhomogeneous-simplification} one can also produce examples as in Example \ref{proximal-example}.

\begin{example}
    Suppose that $G$ is a group which acts on a locally compact space $X$ minimally with the following proximality property: there is a net ${(s_i)}$ in $G$ and a point ${y\in X}$ so that ${s_ix\to y}$ for every ${x\in X}$. Note that this property always holds for proximal actions on compact spaces. Now, let ${\alpha:G\to \Aut(M_n)}$ be an action of $G$ on ${M_n}$ which is strongly minimal. If one defines the natural action of $G$ on ${C_0(X, M_n)}$ as in Example \ref{proximal-example}, then ${(C_0(X, M_n), G)}$ is strongly minimal. Applying Corollary \ref{non-unital-subhomogeneous-simplification} it suffices to check that there is no nontrivial $G$-invariant function in ${C_b(X, M_n)}$. Up to passing to a subnet of ${(s_i)}$, it can be assumed that ${\alpha(s_i^{-1})\to \beta\in \Aut(M_n)}$. If ${f\in C_b(X, M_n)}$, then for every ${x\in X}$
    \eqnsp{f(x)=\alpha(s_i^{-1})(f(s_ix))\to \beta(f(y))}
    In particular, ${\beta(f(y))=f(y)}$, so $f$ is constant. A non-trivial example ($X$ non-compact and $n\geq 2$) with these conditions achieved can be constructed as follows: the group $K$ of affine automorphisms of ${X=\mathbb R^k}$ acts minimally (translations) and satisfies the proximality property (taking ${s_k=\frac{1}{k}\text{id}_{\mathbb R^n}}$ one has ${s_kx\to 0})$. While $K$ has no finite-dimensional irreducible representations which are not one-dimensional, if one takes ${G=\mathbb F_\infty}$ to be the free group on countably many generators and a group homomorphism ${G\to K}$ with dense image, then any irreducible representation of $G$ yields a strongly minimal action as above.
\end{example}

\end{document}